\begin{document}
\newcommand{\norm}[1]{\left\| #1 \right\|}
\newcommand{\set}[1]{\left\{#1\right\}}
\newcommand{\R}{\mathbb{R}}
\newcommand{\fl}[1]{\lfloor #1 \rfloor}
\newcommand{\ceil}[1]{\lceil #1 \rceil}
\newcommand{\n}{\mathbb{N}}
\newcommand{\ds}{\displaystyle}
\newcommand{\z}{\mathbb{Z}}
\newcommand{\mb}[1]{\mathbb{#1}}
\newcommand{\T}{\mathbb{T}}
\newcommand{\I}{\mb{I}}
\renewcommand{\le}{\leq}
\renewcommand{\ge}{\geq}
\newcommand{\del}{\delta}
\newcommand{\m}{\mu}
\newcommand{\al}{\alpha}
\newcommand{\emf}[1]{\emph{#1}}
\newcommand{\ld}{\lambda}
\newcommand{\tx}[1]{\text{#1}}
\newcommand{\cl}[1]{\overline{#1}}
\newcommand{\tpl}[1]{(#1_1,\dots,#1_n)}
\newcommand{\tplt}[2]{(#1_1 #2_1,...,#1_n #2_n)}
\newcommand{\con}{\subset}
\newcommand{\s}{\mb{S}}
\newcommand{\ub}[1]{\underbrace{#1}}
\newcommand{\e}{\epsilon}
\renewcommand{\k}{\kappa}
\newcommand{\tab}{\hspace*{1em}}
\newcommand{\fr}[2]{\frac{#1}{#2}}
\renewcommand{\ds}{\displaystyle}
\newcommand{\f}[1]{\fr{1}{#1}}
\newcommand{\eq}{equation}
\newcommand{\D}{\Delta}
\newcommand{\Q}{\mb{Q}}

\newtheorem{theorem}[equation]{Theorem}
\newtheorem{definition}[equation]{Definition}
\newtheorem{lemma}[equation]{Lemma}
\newtheorem{proposition}[equation]{Proposition}
\newtheorem{corollary}[equation]{Corollary}
\newtheorem{conj}[equation]{Conjecture}

\theoremstyle{definition}
\newtheorem{example}{Example}

\title{The Lonely Runner Conjecture}
\thanks{I would like to acknowledge Professor Lind for the insight and support he gave freely during this project, and my family for supporting everything I do.}
\author{Clayton L. Barnes, II}
\maketitle
\pagebreak

%
\section{Origins}
Imagine two runners traveling, at distinct integer speeds, in the same direction along a circular track of unit perimeter.  Provided that the runners continue at their respective speeds indefinitely, it is clear that there will be a time when the runners are directly across from each other. More precisely, there is a time when the locations of the runners divide the circular track into two equal portions. Alternatively, there is a time for each $i$ when no other runner is within a half-track length of the runner $r_i$. Here the distance is to be taken along the track. Notice that because we are considering only two runners, if  $r_1$ is lonely at a given time then $r_2$ will also be lonely at that time. Moreover, loneliness will occur no matter the speeds of the runners so long as they are distinct. We can generalize this and consider three runners, with distinct integer speeds, running on the same unit track. However, if the use of ``lonely" remains the same as the case of two runners, one can give three runners speeds in such a way so no runner will be lonely. Giving the runners respective speeds of 1, $2$, and 3 laps per time unit will not admit any lonely runners. In order to allow the possibility, we alter the definition of ``lonely" with respect to the number of runners: 

\begin{definition}
If there are $k$ runners on the track with distinct speeds, a runner $r_i$ becomes $\emf{lonely}$ at some given time if none of the other $k-1$ runners are within a distance of $1/k$ of $r_i$ at that time.
\end{definition}

As in the beginning example, the distance is taken along the track of circumference 1. The lonely runner conjecture is the following:

\begin{conj}
Let $k$ be an arbitrary natural number, and consider $k$ runners with distinct, fixed, integer speeds traveling along a circle of unit circumference. Then each runner becomes lonely at some time.
\end{conj}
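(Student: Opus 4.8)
\emph{Reduction to the view-obstruction form.} The plan is to recast the conjecture in the standard ``view-obstruction'' form, dispatch small numbers of runners by a finite argument, and then induct on the number of runners. Whether $r_i$ is lonely at time $t$ depends only on the numbers $\norm{(v_j-v_i)t}$, where $\norm{x}$ is the distance from $x$ to the nearest integer --- which is exactly the along-the-track distance between $r_i$ and $r_j$. Subtracting $v_i$ from every speed fixes $r_i$ and leaves $k-1$ runners with distinct \emph{nonzero} integer speeds $u_1,\dots,u_{k-1}$, and $r_i$ is lonely at $t$ precisely when $\norm{u_\ell t}\ge\f{k}$ for all $\ell$. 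So it suffices to prove the uniform statement: for every $n$ and every choice of $n$ distinct nonzero integers $u_1,\dots,u_n$ there is a $t\in[0,1)$ with $\norm{u_\ell t}\ge\fr{1}{n+1}$ for all $\ell$. The conjecture for $k$ runners is the case $n=k-1$; the other runners cost nothing extra, since the uniform statement applies verbatim to each of their shifted speed sets. One may also assume $\gcd(u_1,\dots,u_n)=1$, as dividing out the gcd merely rescales time, and a routine approximation argument then upgrades the integer case to arbitrary real speeds.

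\emph{Small $k$, and why the easy bound is easy.} For $n=1$, $t=\f{2u_1}$ works. For larger $n$ one wants $t$ lying outside the bad sets $B_\ell=\set{t\in[0,1):\norm{u_\ell t}<\fr{1}{n+1}}$, but each $B_\ell$ has measure $\fr{2}{n+1}$, so $\sum_\ell|B_\ell|=\fr{2n}{n+1}>1$ and the union bound says nothing. The softer target $\f{2n}$ in place of $\fr{1}{n+1}$ does yield to it: the corresponding bad sets have measures summing to exactly $1$, yet all of them contain a common neighborhood of $t=0$, so their union cannot be all of $[0,1)$, and some $t$ achieves $\norm{u_\ell t}\ge\f{2n}$ for every $\ell$. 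Closing the remaining factor forces one to organize the residues $u_\ell t\bmod 1$ carefully; doing this by hand, or by a finite computer check, disposes of the first several values of $k$, and is how the conjecture is currently known for $k\le 7$.

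\emph{The induction, and the main obstacle.} Assume the uniform statement for fewer than $n$ speeds and order $|u_1|\le\cdots\le|u_n|$. Discarding $u_n$ gives a $t_0$ with $\norm{u_\ell t_0}\ge\f{n}$ for $\ell<n$; since $t\mapsto\norm{u_\ell t}$ is $|u_\ell|$-Lipschitz, each of these constraints still holds at level $\fr{1}{n+1}$ throughout the interval $I=(t_0-\del,t_0+\del)$ with $\del=\fr{1}{n(n+1)|u_{n-1}|}$. The good set for the last runner, $\set{t:\norm{u_n t}\ge\fr{1}{n+1}}$, is periodic with period $\f{|u_n|}$ and its complementary arcs have length $\fr{2}{(n+1)|u_n|}$, so $I$ is forced to meet it once $|u_n|\gtrsim n|u_{n-1}|$. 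That settles every case in which the speeds are spread out enough. The remaining case --- all $n$ speeds confined to a bounded range, so that no single speed dominates --- is the true core of the conjecture, and it is precisely here that I expect to be stuck: the union bound is dead, the Lipschitz trick is dead, and one seems to need a genuine arithmetic input --- a weighted second-moment estimate, a covering-system identity relating the $u_\ell$, or a structural reduction bounding the speeds one must consider in terms of $k$ alone --- that would at the same time improve the best known general guarantee, a gap of only about $\f{2n}$, to the conjectured $\fr{1}{n+1}$, roughly a factor of two. No such input is known for $k\ge 8$, which is why the conjecture is still open; an honest proposal has to locate its decisive new idea in exactly this bounded-range regime.
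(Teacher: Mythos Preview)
There is no proof in the paper to compare against: the statement is the Lonely Runner Conjecture itself, which is open, and the paper is explicitly a survey. It proves equivalent reformulations (Propositions~4 and~8, Theorem~7, Proposition~16), the bounds $\frac{1}{2n}\le\kappa(n)\le\frac{1}{n+1}$ (Propositions~9 and~11), and the case of three runners (Theorem~10), but it never claims a proof of the general conjecture. Your proposal is honest about this --- you title it a strategy, and you locate the obstruction exactly where it belongs, in the regime where all speeds are of comparable size.

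The partial ingredients you use line up with what the paper does establish. Your reduction to $n$ nonzero speeds with a stationary observer is the content of Proposition~4; your observation that the gcd can be scaled out and that real speeds follow from integer speeds is Proposition~5; your ``easy bound'' $\frac{1}{2n}$ via measure is Proposition~11, argued in the paper by the same covering idea (the paper phrases it as the union of the bad intervals $J_i$ covering $[0,1]$, which forces $2n\,\delta_n\ge 1$). The Lipschitz-plus-induction step you sketch for the case $|u_n|\gtrsim n|u_{n-1}|$ is correct as written and is a standard reduction, though the paper does not spell it out. So nothing you wrote is wrong; it simply is not a proof, and you say so yourself. For $k\ge 8$ the conjecture is genuinely open, and your closing paragraph identifies the missing idea accurately rather than papering over it.
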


The problem of the lonely runner is interesting for several reasons. First the conjecture is relatively intuitive to grasp and easy to state. Most any mathematician, or any person for that matter, can understand the problem statement in little time. Secondly, the lonely runner conjecture (LRC) has equivalent statements that are seemingly unrelated at first glance. We will survey these equivalent formulations in the order of their discovery, proving their equivalence and the interesting results relating them. In the end, we will see that these equivalent formulations of the LRC are quite similar, and that each contributes a new perspective on the overall problem. However, what really makes the LRC interesting, is that after more than fifty years since its discovery, it remains unsolved. Currently it is known to hold for up to and including seven runners. The difficulty of proving the LRC may at first seem to increase exponentially with the number of runners. For two runners the problem is trivial; three runners is no more than a page to prove; the first proof of six runners was almost fifty pages, by a group of mathematicians from MIT [3]. But, a more clever argument by the French mathematician Jerome Renault proved the case of six runners in nine pages [7]. More recently two Spanish mathematicians prove the case for seven runners [2].
\\
$\tab$ For a real number $x$, let $\norm{x}$ denote the distance from $x$ to the nearest integer. If we are working in more than one real dimension then $\norm{x}$ is the distance from the vector $x$ to the closest integer lattice point. We must make the following notational note: We denote $\n$ as the natural numbers excluding 0, while $\n_0$ includes 0. Let $k \in \n$ and consider $k$ runners with distinct speeds $s_1, ..., s_k$ which are all in $\n_0$.  A runner with speed $s_i$ is lonely at time $t$ if and only if $\ds \norm{(s_j - s_i)t} \geq 1/k$ for all $j \neq i$. This shows that the LRC is equivalent to the following:

\begin{conj}
For each $k \in \n$ define the set $S_k = \set{s \in \n_0^k: \,s = (s_1, ... ,s_k), s_i \neq s_j  \emph{ for } i \neq j}$. If $s = (s_1, ..., s_k) \in S_k$, for each $i$ there is a $t \in \R$ where 
$\norm{(s_i - s_j)t} \geq 1/k$ for all $j \neq i$.
\end{conj}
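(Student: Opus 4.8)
\medskip
\noindent\textbf{A plan of attack.} The statement above is the Lonely Runner Conjecture itself, so what follows is a strategy rather than a finished argument --- the point at which it stalls is essentially why the conjecture is still open.

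\emph{First I would reduce to a stationary runner.} Fix the index $i$. For each $j \neq i$ set $v_j = s_i - s_j$; since $s \in S_k$ these are $k-1$ distinct nonzero integers, and because $\norm{vt} = \norm{(-v)t}$ I may replace each $v_j$ by $|v_j|$ (any coincidence then produced only weakens the requirement). Rescaling $t$ by the reciprocal of $\gcd(v_1,\dots,v_{k-1})$, I may also assume that gcd equals $1$. Thus it is enough to show: given positive integers $v_1,\dots,v_{k-1}$, there is a single $t \in \R$ with $\norm{v_j t} \ge 1/k$ for every $j$.

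\emph{Next I would set up the measure heuristic and see why it falls short.} On the circle $\R/\z$, the ``bad'' set $B_j = \set{t \in \R/\z : \norm{v_j t} < 1/k}$ is a union of $v_j$ equally spaced open arcs of total length $2/k$, and loneliness of runner $i$ is the assertion that $\bigcup_j B_j \neq \R/\z$. Summing lengths gives only $\sum_j \tx{length}(B_j) \le (k-1)(2/k) < 2$, which exceeds $1$ once $k \ge 3$; so no first-moment count can close the gap. That gap --- roughly a factor of two --- is the entire difficulty: one must show the arcs $B_j$ genuinely overlap.

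\emph{To extract the overlaps I would try, in turn, the standard refinements.} (a) \emph{Discretize}: restrict $t$ to the finitely many rationals with a well-chosen denominator $N$ (for small $k$ one takes $N$ slightly larger than $k$, or a product built from the $v_j$); loneliness becomes the combinatorial claim that the bad residues fail to cover $\z/N\z$, which is how the cases $k \le 7$ are handled. (b) \emph{Fourier / exponential sums}: encode the indicator of the good set as a character sum and estimate $\sum_t \prod_j \mathbf{1}\{\norm{v_j t}\ge 1/k\}$, hoping for cancellation among the frequencies $v_j$. (c) \emph{Geometry of the torus}: the curve $t \mapsto (v_1 t,\dots,v_{k-1}t)$ is a line in $(\R/\z)^{k-1}$, and loneliness says it meets the central cube $[1/k,\,1-1/k]^{k-1}$; studying the closure of this line --- the subgroup of the torus it generates --- reduces to a lower-dimensional instance whenever the $v_j$ obey an integer linear relation. (d) \emph{Descent}: divisibility among the $v_j$ (a prime $p$ dividing several of them, say) can sometimes be used to pass to fewer runners and induct on $k$.

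\emph{The hard part will be exactly the gap from the second paragraph.} None of (a)--(d) is known to work uniformly in $k$, and the configuration $v_j = j$ for $j = 1,\dots,k-1$ shows the constant $1/k$ is best possible --- so a correct argument must be essentially sharp and cannot discard even a constant factor, which rules out every soft counting bound. Quantifying the overlaps of the $B_j$ precisely and uniformly in $k$ is what has never been achieved; that is why, beyond seven runners, the conjecture remains open.
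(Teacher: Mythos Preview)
You have correctly identified the situation: Conjecture~3 \emph{is} the Lonely Runner Conjecture, and the paper does not prove it. The paper's only ``proof'' attached to this statement is the one-sentence observation preceding it --- that runner $i$ is lonely at time $t$ exactly when $\norm{(s_j - s_i)t} \ge 1/k$ for every $j \neq i$ --- which establishes the equivalence with Conjecture~2, not the truth of either. So your proposal is appropriate in kind: there is nothing to compare it against, because neither you nor the paper supplies a proof.

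That said, your survey of approaches lines up closely with what the paper actually develops in later sections, and it is worth noting the correspondences. Your measure heuristic (the bad sets $B_j$ of total length $2/k$) is exactly the content of Proposition~11, which yields the unconditional bound $\kappa(n) \ge 1/(2n)$; your observation that the extremal case $v_j = j$ shows $1/k$ is sharp is Proposition~9; your torus-geometry reduction (c) is the view-obstruction reformulation of Section~3 and the closure argument of Lemma~6; and your discretization idea (a) is carried out in Section~5 via finite fields, culminating in Conjecture~34. The paper, like your proposal, stops short of closing the factor-of-two gap you isolate in the second paragraph --- and for the same reason.

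One minor sharpening: in your reduction step you note that replacing each $v_j$ by $|v_j|$ may create coincidences, which ``only weakens the requirement.'' The paper handles this more cleanly in the remark after Proposition~4: since $\norm{st} = \norm{-st}$, one may work over $\n^{k-1}$ without loss, and duplicated speeds can simply be dropped (if $\del_m$ satisfies the bound on distinct tuples it automatically does on all of $\n^m$). This is cosmetic, but it tidies the reduction.
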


Define the function $\del_k: \n^k \to \R$ by $\del_k(s) = $sup$_{t \in \R} $min$_{1 \leq i \leq k} \norm{s_i t}$. Then,

\begin{proposition}
The lonely runner conjecture is equivalent to $\ds\inf_{s \in \n^k} \del_k (s) \geq 1/(k+1)$ for all $k \in \n$.
\end{proposition}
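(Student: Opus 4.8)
The plan is to prove the two implications separately. Throughout I will use the elementary fact that the loneliness condition $\norm{(s_j-s_i)t}\ge\f{k}$ for all $j\ne i$ depends only on the differences of speeds, so subtracting the integer $s_i$ from every speed is harmless; combined with $\norm{x}=\norm{-x}$, this lets me reduce to the situation of one stationary runner together with a tuple of positive integer speeds.

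\textbf{LRC $\Rightarrow$ the bound.} Assume the LRC. Fix $k\in\n$ and $s=(s_1,\dots,s_k)\in\n^k$, and let $v_1<\dots<v_m$ be the distinct values occurring among the $s_i$, so $1\le m\le k$ and each $v_j\ge1$. The $m+1$ speeds $0,v_1,\dots,v_m$ are pairwise distinct, so by the LRC the stationary runner is lonely at some time $t_0$: $\norm{v_jt_0}\ge\f{m+1}$ for every $j$. Since $\min_{1\le i\le k}\norm{s_it}=\min_{1\le j\le m}\norm{v_jt}$ for all $t$, we obtain $\del_k(s)\ge\min_j\norm{v_jt_0}\ge\f{m+1}\ge\f{k+1}$; as $s$ was arbitrary, $\inf_{s\in\n^k}\del_k(s)\ge\f{k+1}$.

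\textbf{The bound $\Rightarrow$ LRC.} Now assume $\inf_{s\in\n^k}\del_k(s)\ge\f{k+1}$ for every $k$. It suffices to verify the $S_k$-reformulation of the LRC given above. The case $k=1$ is trivial, so fix $k\ge2$, a tuple $s\in S_k$, and an index $i$; after subtracting $s_i$ from every speed we must find $t$ with $\norm{(s_j-s_i)t}\ge\f{k}$ for all $j\ne i$. Setting $v_j:=|s_j-s_i|\in\n$ we have $\norm{(s_j-s_i)t}=\norm{v_jt}$, and the tuple $w:=(v_j)_{j\ne i}$ lies in $\n^{k-1}$, so the hypothesis applied with $k-1$ in place of $k$ gives $\del_{k-1}(w)=\sup_t\min_{j\ne i}\norm{v_jt}\ge\f{(k-1)+1}=\f{k}$.

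The one point that needs care is upgrading this supremum to an attained value. The function $g(t):=\min_{j\ne i}\norm{v_jt}$ is continuous and $1$-periodic, hence descends to a continuous function on the compact torus $\T=\R/\z$, which therefore attains its supremum at some $t_0$. Then $\norm{v_jt_0}\ge g(t_0)=\del_{k-1}(w)\ge\f{k}$ for every $j\ne i$, i.e.\ runner $r_i$ is lonely at time $t_0$, which completes the reformulation and hence the LRC. I expect this compactness step to be the main (and essentially only non-routine) obstacle: $\del_k$ is defined via $\sup_{t\in\R}$, whereas loneliness requires a bona fide time, so without knowing the supremum is a maximum the two statements would agree only up to an arbitrarily small error. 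Everything else — translation invariance of loneliness, $\norm{x}=\norm{-x}$, and the collapsing of repeated speeds inside the $\min$ — is bookkeeping.
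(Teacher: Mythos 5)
Your proof is correct and follows essentially the same route as the paper: adjoin a stationary runner for the forward direction, and pass to the tuple of absolute differences $(|s_j-s_i|)_{j\ne i}\in\n^{k-1}$ for the converse. You additionally spell out two points the paper only gestures at — collapsing repeated speeds to handle non-distinct tuples (the paper relegates this to a remark before its proof) and the attainment of the supremum via continuity and $1$-periodicity on the compact torus (the paper asserts ``hence there is a time $t$'' without justification) — and both of these refinements are handled correctly.
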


Before proving this, we make a few observations. First, the function $\del_k$ can be thought of as a real function whose domain consists of $k$ runners with nonzero speeds. It is not required that these speeds be distinct. If we do make such a requirement and restrict $\del_k$ to $C_k= S_k \cap \n^k$, then assuming $\del_k|_{C_k} (s) \geq 1/(k+1)$ for all $s \in C_k$, it easily follows that $\del_k (s) \geq 1/(k+1)$ for all $s \in \n^k$.

\begin{proof}[Proof of Proposition 4]
Let $k \in \n$. Assume the lonely runner conjecture holds, and choose $(s_1, ... ,s_k) \in C^k$. Then $(0, s_1, ..., s_k) \in \n_0^{k+1}$ and all $s_i$ are all nonzero, so there is a time $t$ when the runner with zero speed is lonely, i.e., $\norm{s_i t} = \norm{(s_i -0)t} \geq 1/(k+1)$ for all $1 \leq i \leq k$. Thus sup$_{t \in \R}$min$_{1 \le i \le k} \norm{s_i t} \geq 1/(k+1)$, so that inf$_{s \in \n^k} \del_k(s) \geq 1/(k+1)$.
	Assume $\del_k(n) \geq 1/(k+1)$ for all $n \in \n^k$, we show Conjecture 2 holds. Pick $(s_1,..., s_k) = s \in S_k$. For an arbitrary $s_i$ we show that the runner whose speed is $s_i$ becomes lonely. Since $s_i \neq s_j$ for $i \neq j$, we have $s' = (|s_1-s_i|,..,|s_{i-1}- s_i|, |s_{i+1} - s_i|,.., |s_k - s_i|) \in \n^{k-1}$ so that by hypothesis, $\del_{k-1} (s') \geq 1/k$. Hence there is a time $t$ with $\norm{|s_j -s_i| t} = \norm{(s_j - s_i)t} \geq 1/k$ for $j \neq i$. Thus the runner is lonely at time $t$, i.e., Conjecture 2 holds.
\end{proof} 

One can glean from the proof above that the LRC holds for $k$ runners if and only if the infimum condition on $\del_n$ holds for $n = k-1$. Also, if one extends $\del_k$ in the obvious way to a function $\del_k'$ on $\z^k$, we have $\ds\inf_{s \in \z^k} \del_k'(s) = \ds \inf_{s \in \n^k} \del_k(s)$ since $\norm{st} = \norm{-st}$. What this says in terms of the LRC is that the direction of the runners is irrelevant: a general case of the LRC where one considers runners of both directions is implied by the originally stated  LRC where runners travel counterclockwise.
	 Although the LRC seems like a natural question following from the first example of two runners, the problem was not originally asked in this way. It was first posed as a problem in diophantine approximation relating the function $\del_k$ to orbits of irrational $k$-tuples in the $k$-dimensional torus, and few years later as an equivalent problem in view-obstruction. One may wonder about the LRC in a context where the runners' speeds are arbitrary. The proposition below shows that the LRC where the runners' speeds are natural numbers implies the more general case where the runners have arbitrary real speeds.

\begin{proposition}
Let $k$ runners have arbitrary distinct real speeds $\set{r_i}_1^k$. If the LRC holds, then each runner becomes lonely.
\end{proposition}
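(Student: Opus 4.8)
The plan is to reduce the real-speed case to the integer-speed LRC by a rescaling of time, the subtlety being that an honest rescaling is only possible after replacing the speeds by rationals, and the approximation error must be controlled; the right tool is Dirichlet's theorem on \emph{simultaneous} Diophantine approximation. First I would set up the reduction exactly as in the proof of Proposition~4. Fix a runner $r_i$; by the distance reformulation recorded before Conjecture~3, $r_i$ is lonely at time $t$ precisely when $\norm{(r_j-r_i)t}\ge \fr{1}{k}$ for every $j\ne i$. Write $b_1,\dots,b_{k-1}$ for the $k-1$ nonzero reals $\{r_j-r_i:j\ne i\}$, put $m=k-1$, and set $g(t)=\min_{1\le j\le m}\norm{b_jt}$; it suffices to produce a $t$ with $g(t)\ge\fr{1}{m+1}$. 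If the $b_j$ are pairwise commensurable, write $b_j=\ld\,n_j$ with $n_j\in\z\setminus\{0\}$ and $\ld\in\R\setminus\{0\}$; then $g(t)=\min_j\norm{n_j(\ld t)}$ is continuous and periodic in $t$, so $\ds\sup_t g=\del_m(|n_1|,\dots,|n_m|)$ is attained, and it is $\ge\fr{1}{m+1}$ by Proposition~4 (using $\norm{n_ju}=\norm{|n_j|u}$ and $(|n_1|,\dots,|n_m|)\in\n^m$). So assume henceforth the $b_j$ are not pairwise commensurable; in particular some $b_j$ is irrational.

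Now comes the approximation step. By Dirichlet's theorem, for every integer $Q\ge1$ there are an integer $N$ with $1\le N\le Q$ and integers $c_1,\dots,c_m$ with $\big|b_j-\fr{c_j}{N}\big|<N^{-1-1/m}$ for all $j$; since some $b_j$ is irrational this holds for infinitely many $N$, and for such $N$ large enough all $c_j\ne0$. Proposition~4 applied to $(|c_1|,\dots,|c_m|)\in\n^m$, together with the periodicity and continuity of $u\mapsto\min_j\norm{c_ju}$, gives $u_N\in[0,1)$ with $\min_j\norm{c_ju_N}\ge\fr{1}{m+1}$. Put $t_N=Nu_N$. Then $b_jt_N=c_ju_N+\big(b_j-\fr{c_j}{N}\big)Nu_N$, and the last term has absolute value less than $N^{-1-1/m}\cdot N\cdot1=N^{-1/m}$, so $\norm{b_jt_N}\ge\norm{c_ju_N}-N^{-1/m}\ge\fr{1}{m+1}-N^{-1/m}$. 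Hence $\ds\sup_t g(t)\ge\fr{1}{m+1}-N^{-1/m}$ for infinitely many $N$, and letting $N\to\infty$ gives $\ds\sup_t g(t)\ge\fr{1}{m+1}=\fr{1}{k}$.

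It remains to promote this supremum to an attained value, since ``becomes lonely'' asks for a genuine time. The orbit $t\mapsto(b_1t,\dots,b_mt)\bmod\z^m$ is equidistributed in a subtorus $H\con\T^m$, and because the $b_j$ are not pairwise commensurable $\dim H\ge2$. The key claim is that $\ds\sup_{x\in H}\min_j\norm{x_j}>\fr{1}{m+1}$ \emph{strictly}: granting it, the open set $\{x:\min_j\norm{x_j}>1/(m+1)\}$ meets $H$, hence meets the dense orbit, producing a $t$ with $g(t)>\fr{1}{m+1}$. I would attack the strict inequality by using Kronecker's theorem to describe $H$ and reducing, within a single commensurability class of the $b_j$ (which has at most $m-1$ members, since there are at least two classes), to an instance of the LRC whose loneliness gap $\fr{1}{m'+1}$ with $m'<m$ is strictly bigger than $\fr{1}{m+1}$, then combining the slack across classes by equidistribution. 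I expect this last step---pinning down the orbit closure and the interaction between commensurability classes in the incommensurable case---to be the real obstacle, together with choosing the simultaneous-approximation exponent so that the error $N^{-1/m}$ genuinely tends to $0$; the transfer in the middle paragraph is otherwise routine.
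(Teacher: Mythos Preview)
Your Dirichlet argument in the second paragraph is correct and clean, and it does yield $\sup_t g(t)\ge\fr{1}{m+1}$. But, as you already flag, this is not attainment, and the third paragraph is where the proof actually lives. The trouble is that the Dirichlet step buys you nothing toward that goal: if you can prove the strict inequality $\sup_{x\in H}\min_j\norm{x_j}>\fr{1}{m+1}$ on the orbit closure, you are done without ever invoking Dirichlet; and if you cannot, Dirichlet's $\sup\ge\fr{1}{m+1}$ is too weak. So the second paragraph is a correct detour, and the content of the proof is entirely in the part you leave as a sketch.

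Your sketch is in the right spirit, but the phrase ``within a single commensurability class'' is not quite the right decomposition. Pairwise commensurability classes do not control the orbit closure when there are $\Q$-linear relations among the $b_j$ beyond proportionality: e.g.\ $b_1=\sqrt2$, $b_2=\sqrt3$, $b_3=\sqrt2+\sqrt3$ has three singleton classes, yet $\dim H=2$ and the coordinates are coupled by $x_3\equiv x_1+x_2$. The paper's argument (its Lemma~6) fixes this by choosing a $\Q$-basis $b_{i_1},\dots,b_{i_v}$ of $\mathrm{span}_{\Q}\{b_j\}$ with $v\ge2$, using density of $\{q(b_{i_1},\dots,b_{i_v})\}_{q\in\z}$ in $\T^v$ to find integers $q_k$ with $q_k b_{i_l}\to\alpha$ for a \emph{common} irrational $\alpha$, whence $q_k b_j\to p_j\alpha$ with rationals $p_j$ and $p_{i_l}=1$ for every basis index. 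Clearing denominators gives an integer vector $(w_1,\dots,w_m)$ with $w_{i_l}=c$ for all $v$ basis indices, and $\alpha(w_1,\dots,w_m)\in H$. There are at most $m-v+1$ distinct values among the $w_j$, so the integer LRC (Proposition~4) yields a $t'$ with $\min_j\norm{t'w_j}\ge\fr{1}{m-v+2}>\fr{1}{m+1}$; the point $t'(w_1,\dots,w_m)\in H$ then lies in the open set $\{\min_j\norm{x_j}>\fr{1}{m+1}\}$, so the dense orbit meets it too. This is exactly the ``slack from fewer speeds'' you are reaching for, but the reduction is via a $\Q$-basis rather than commensurability classes, and it is this that makes the combination across ``classes'' automatic. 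Once you replace your sketch by this mechanism, your first paragraph plus this argument is a complete proof, and the Dirichlet paragraph can be deleted.
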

\begin{proof}
We use Lemma 6 in the next section. Let $1 \le i \le k$, we show the runner with speed $r_i$ becomes lonely. Set $s_j = r_j - r_i$ for $j \neq i$ and re-index as to consider the set $\set{s_l}_1^{m}$. If the $s_j$ are rationally independent then this is trivial, as the orbit of $(s_1,\dots,s_m)$ is dense in the $m$-torus. Thus assume the maximum number of rationally independent $s_i$ is $1< v < m$. We may also assume that all the $s_i$ are irrational by considering $\set{\beta s_i}_1^m$ for an appropriate irrational $\beta$. By Lemma 6 there is an irrational $\alpha$ and integers $w_i$, not all of which are 0, where $\cl{O(\alpha')} \con \cl{O(s_1,\dots,s_{m})}$ such that $\alpha' = \alpha(w_1,\dots, w_m)$ and $w_i=c$ for $v$ number of $i$ for some nonzero integer $c$. Here $O(\alpha)$ is the orbit of $\alpha$ in the $m$-torus. The LRC implies that $\norm{t'w_i} \ge 1/(m-v+2)$ for some $t'$ and all $i$, by assumption that the $w_i$ are integers. Hence there is a time $t = t'/\alpha$ when $\norm{\alpha t' w_i} > 1/(m+1)$ for each $i$. Since $\alpha(tw_1,\dots,tw_m) \in t (\cl{O(s_1,\dots,s_m)})$, and as $1/(m-v+2) > 1/(m+1)$, we have that $t (\cl{O(s_1,\dots, s_m)}) \cap \T^m\setminus[\f{m+1}, \fr{m}{m+1}]^m \neq \emptyset$. Thus $tO(s_1,\dots,s_m) \cap \T^m\setminus[\f{m+1}, \fr{m}{m+1}]^m \neq \emptyset$ as the latter set is open, so there is an integer $q$ when $\norm{qts_i} \ge 1/(m+1)$ for each $i$. Hence there is a time $qt$ when $\norm{qt(r_j - r_i)} \ge 1/(m+1) = 1/k$ for each $j \neq i$.
\end{proof}

%
\section{Diophantine discovery}
In 1967, the German mathematician J\"{o}rg Wills wrote an article containing five related topics on diophantine approximation. The last two topics relate closely to the LRC via the function $\del_k$. Wills was the first to explicitly find bounds for $\del_k$. Following in his exposition, we first define the following functions. For $n \in \n$, let $\mu : \z \times \R^n \to \R$  be $ \m(q, a) = \text{min}_{1 \leq i \leq n} \norm{q a_i}$ where $a = (a_1, ..., a_n)$, and 
$$\ld_n(a) = \ds\sup_{q \in \z} \mu (q, a).$$
\begin{example}
If $n =2$ so $a = (a_1, a_2)$, to picture $\ld_2(a)$ first imagine the additive group generated by $a$ as a subset $S/ \z^2$ of the two torus $\T^2$, where $S = \set{ qa : q \in \z}$. If $c = \ld_2(a)$, any point $(x, y) \in S$ has either $\norm{x} \le c$ or $\norm{y} \le c$. That is, $1-2c$ is largest length of any square centered at $(\f{2}, \f{2})$ whose interior does not intersect $S/ \z^2$. For if $(x,y)$ were in the interior, $\norm{x} > \f{2} - \fr{1-2c}{2} = c$ and the same for $y$. Thus $S/\z^2$, which is the orbit of $a$ in $\T^2$, intersects any square centered at $(\f{2}, \f{2})$ whose length is greater than $1-2c$, and $c$ is the smallest number for which this holds.
\end{example}
In his paper, Wills considers the case when $n$ is a nonzero natural number, and $a$ is an irrational $n$-tuple. We let $\I$  the set of irrationals, and prove a lemma before stating the main result relating the functions $\del_n$ and $\ld_n$. First, some notation.
For $a = (a_1, ... ,a_n) \in \R^n$, define 
$$Q(a) = \set{(s_1, ...,s_n) \in \R^n : s_i = q a_i + t_i, \, q, t_i \in \z , \, i=1,..,n}$$ 
In the case when $n = 2$ reconsider the set $S$ above. The elements in the corresponding set $Q(a)$ are real ordered pairs, $(s_1, s_2)$, whose difference with a pair of integers $(t_1, t_2)$ is in $S$. Here we take addition between pairs to be coordinate wise. In other words an element $x$ is in $Q(a)$ if and only if  $x=t$\,mod$(S)$ for some $t \in \z^2$.
\begin{lemma}
For $n \in \n$, $a = \tpl{a} \in \I^n$, consider the set $Q(a)$. There exists an irrational number $\alpha$ and $s_i \in \z$, such that $\cl{Q(a')} \con \cl{Q(a)}$ where $a' =(s_1\alpha,...,s_n\alpha)$. If m is the dimension of a rationally independent basis for $\set{a_i}$, then $s_i=c$ for an m number of i, where c is a nonzero integer.
\end{lemma}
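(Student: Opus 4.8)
The plan is to translate the inclusion of orbit closures into an arithmetic condition on the integer vector $s$, and then to produce $s$ by elementary linear algebra over $\mathbb{Q}$. First, since $Q(a)=\mathbb{Z}a+\mathbb{Z}^{n}$ is invariant under translation by $\mathbb{Z}^{n}$, the set $\cl{Q(a)}$ is the full preimage in $\mathbb{R}^{n}$ of the closure $H:=\cl{\langle a\rangle}$ of the cyclic subgroup generated by $a\bmod\mathbb{Z}^{n}$ in $\mathbb{T}^{n}$. By the standard description of the closure of a cyclic subgroup of $\mathbb{T}^{n}$ (Kronecker's theorem, or Pontryagin duality), $H=\{x\in\mathbb{T}^{n}:\langle M,x\rangle\in\mathbb{Z}\ \text{for all}\ M\in\Lambda\}$, where $\Lambda:=\{M\in\mathbb{Z}^{n}:\langle M,a\rangle\in\mathbb{Z}\}$ is the lattice of integer relations mod $1$ among $a_{1},\dots,a_{n}$ and $\langle M,x\rangle=\sum_{i}M_{i}x_{i}$. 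Because $H$ is a closed subgroup, for $a'=\alpha s$ with $s\in\mathbb{Z}^{n}$ and $\alpha$ irrational we obtain $\cl{Q(a')}\subseteq\cl{Q(a)}$ as soon as $\alpha s\bmod\mathbb{Z}^{n}\in H$; and since $\alpha$ is irrational, $\langle M,\alpha s\rangle=\alpha\langle M,s\rangle$ lies in $\mathbb{Z}$ only when $\langle M,s\rangle=0$. So it suffices to exhibit a nonzero $s$ in the sublattice $L:=\{s\in\mathbb{Z}^{n}:\langle M,s\rangle=0\ \text{for all}\ M\in\Lambda\}$, whose rank is $n-\operatorname{rank}\Lambda=\dim H$; the particular irrational $\alpha$ is then irrelevant.

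Next I would build such an $s$ carrying $m$ equal coordinates. After reordering, let $a_{1},\dots,a_{m}$ be a $\mathbb{Q}$-basis of $V:=\operatorname{span}_{\mathbb{Q}}\{a_{i}\}$ and clear denominators in the remaining dependencies: fix a positive integer $N$ and integers $p_{jk}$ with $Na_{j}=\sum_{k\le m}p_{jk}a_{k}$ for every $j>m$. Set $s_{1}=\dots=s_{m}=N$ and $s_{j}=\sum_{k\le m}p_{jk}$ for $j>m$; then $s\in\mathbb{Z}^{n}\setminus\{0\}$ and $m$ of its coordinates equal the nonzero integer $c:=N$. To see $s\in L$, take $M\in\Lambda$, substitute $a_{j}=N^{-1}\sum_{k}p_{jk}a_{k}$ into $\sum_{i}M_{i}a_{i}\in\mathbb{Z}$, and use that $1,a_{1},\dots,a_{m}$ are $\mathbb{Q}$-independent: every coefficient of every $a_{k}$ must vanish, i.e. $NM_{k}=-\sum_{j>m}M_{j}p_{jk}$, and then $\langle M,s\rangle=\sum_{k\le m}\bigl(NM_{k}+\sum_{j>m}M_{j}p_{jk}\bigr)=0$. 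Finally choose any irrational $\alpha$ and put $a'=\alpha s$. (One can bypass the closed-subgroup description as follows: fix $q\in\mathbb{Z}$, take $q'=Nr$, and by Kronecker's density theorem for $(a_{1},\dots,a_{m})$ choose $r$ so that every $ra_{k}$ is simultaneously within $\epsilon$ of $q\alpha$ modulo $1$; then $q'a$ is within $O(\epsilon)$ of $q\alpha s$ modulo $\mathbb{Z}^{n}$, so $q\alpha s\in\cl{Q(a)}$ for every $q\in\mathbb{Z}$ and hence $\cl{Q(a')}\subseteq\cl{Q(a)}$.)

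The step that needs care --- and the main obstacle --- is the bookkeeping around whether $1\in V$. The construction above, and the count of $m$ equal coordinates, relies on $1,a_{1},\dots,a_{m}$ being $\mathbb{Q}$-independent, which holds precisely when $1\notin\operatorname{span}_{\mathbb{Q}}\{a_{i}\}$. If $1\in V$ instead, then $\dim H=m-1$, the lattice $L$ has rank $m-1$, and one cannot in general obtain $m$ equal coordinates (for instance $a=(\sqrt{2},1-\sqrt{2})$ gives $L=\mathbb{Z}(1,-1)$); the same argument still produces a balanced $s$ with $\dim H$ equal coordinates, which is what the application in Proposition 7 actually requires, and there a preliminary rescaling $a\mapsto\beta a$ by a sufficiently generic irrational $\beta$ can be used to arrange $1\notin\operatorname{span}_{\mathbb{Q}}\{\beta a_{i}\}$ at the outset. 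So the care I would take is to read the hypothesis in this way --- or to interpret "$m$" as $\dim\cl{\langle a\rangle}$; the remaining steps are routine linear algebra together with the standard density theorem.
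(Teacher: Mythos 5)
Your proposal is correct, and your primary route is genuinely different from the paper's. The paper argues directly by approximation: it picks a maximal rationally independent subset $\{b_i\}_1^m$, invokes density of $O(b)$ in $\T^m$ to find integers $q_k$ with $q_k b_i \to \alpha$ simultaneously, and deduces that $(p_1\alpha,\dots,p_n\alpha)$ with $p_j=\sum_k r_k^j$ lies in $\cl{O(a)}$; clearing denominators gives $a'$. Your main argument instead characterizes $\cl{Q(a)}$ by Pontryagin duality as the annihilator of the relation lattice $\Lambda$, reduces the inclusion $\cl{Q(a')}\con\cl{Q(a)}$ to the purely arithmetic condition $\langle M,s\rangle=0$ for all $M\in\Lambda$, and then verifies that condition for the explicit $s$ by linear algebra; your parenthetical ``bypass'' is essentially the paper's argument. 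The dual route buys a cleaner logical structure (the irrational $\alpha$ becomes manifestly irrelevant, and the rank of admissible $s$ is visible), at the cost of quoting the closed-subgroup classification; the paper's route is more self-contained but hides its hypotheses. Most valuably, you have put your finger on a real gap that the paper glosses over: the density of $O(b)$ in $\T^m$ (equivalently, the correctness of the ``$m$ equal coordinates'' claim) requires $1,b_1,\dots,b_m$ to be rationally independent, not merely $b_1,\dots,b_m$, and your example $a=(\sqrt2,\,1-\sqrt2)$ shows the lemma as literally stated is false when $1\in\operatorname{span}_{\Q}\{a_i\}$. Your proposed fixes --- reading $m$ as $\dim\cl{O(a)}$, or pre-rescaling by a generic irrational $\beta$ as the paper itself does in Proposition 5 --- are exactly what is needed for the downstream applications.
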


\begin{proof}

Let the $n$-tuple of irrationals $a = \tpl{a}$, be given. Now if $s$ is in the closure of $Q(a)$ then $s$ mod$\,\z^n$ is in the closure of $Q(a)$ equipped with the norm $\norm{ \, \cdot \,}$. If $s$ is in the closure of $Q(a)$ with respect to this norm, then $s+t \in \cl{Q(a)}$ for some $t \in \z^n$. But then $s \in \cl{Q(a)}$ because $Q(a)$ is closed under addition with $\z^n$. Thus to show that there is some irrational $\alpha$ and integers $s_i$ with $a' = (s_1 \alpha,\dots, s_n \alpha)$ having the desired property, it suffices to show the existence of such an $\alpha$ and $k_i$ in $Q(a)$ under $\norm{\, \cdot \,}$, which is the closure of $O(a)=\set{qa}_{q \in \z}$ in the $n$-torus: we denote the closure of a set $A \con \T^n$ as $\cl{A}$ in the rest of the proof.
If all of the $a_i$ are independent over the rationals, then $O(a)$ is dense in the $n$-torus, so there is nothing to prove. 
Otherwise assume that $\set{a_i}_1^n$ are not all rationally independent, i.e., some of the $a_i$ are rationally dependent. Then there exists a rationally independent subset $\set{b_i}_1^m \con \set{a_i}_1^n$ so that for each $i$, $a_i = \sum_{k=1}^m r_k^i b_k$ for some rational numbers $r_k^i$. Since the $b_i$'s are rationally independent, setting $b = (b_1,\dots,b_m)$ results in $O(b)$ being dense in the $m$-torus. Hence for any irrational $\alpha \in \T$ there is a sequence of integers $\set{q_k}_1^\infty$ such that for each $b_i$, $\norm{q_kb_i - \alpha} \rightarrow 0$. Then we have $\norm{r_k^i(q_nb_k - \alpha)} \rightarrow 0$ as $n \to \infty$ for each $k$, $i$ and also have $\norm{q_na_i - \sum_{k=1}^m r_k^i\alpha} = \norm{q_n \sum_{k=1}^m r_k^i b_k - \sum_{k=1}^m r_k^i \alpha} \rightarrow 0$ as $n \to \infty$. Thus $w=(p_1\alpha,\dots,p_n\alpha)$ is in the closure of $\cl{O(a)}$ in $\T^n$, where $p_j = \sum_{k=1}^m r_k^j$ if $a_j \notin \set{b_i}_1^m$, and $p_j = 1$ otherwise. Therefore $\cl{O(w)} \con \cl{O(a)}$ and by multiplying the $p_n$ by an appropriate integer $c$, all $cp_i$ will be integers and setting $a' = cw$ proves the result. 
\end{proof}

We use the above lemma to prove the following relationship between $\del_k$ and $\ld_k$. \\

\begin{theorem}
For any $n \in \n$ and $\e \geq 0$, the following are equivalent:

\begin{enumerate}
\item There is an $s = \tpl{s} \in \n^n$ with $\del_n(s)$ = $\emf{sup}_{t \in \R} \emf{ min}_{1 \leq i \leq n} \norm{s_i t} \le \e$. \\
\item There is an $a = \tpl{a} \in \I^n$ with $\ld_n(a)$ = $\emf{sup}_{q \in \z} \emf{ min}_{1 \leq i\leq n} \norm{q a_i} \le \e$.
\end{enumerate}

That is, $\emf{inf}_{a \in \I^n} \ld_n(a)$ = $\emf{inf}_{s \in \n^n} \del_n(s)$.
\end{theorem}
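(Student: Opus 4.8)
The plan is to prove the biconditional $(1)\Leftrightarrow(2)$ for a fixed $n$ and $\e\ge0$, and then obtain the infimum identity as a formal consequence. The direction $(1)\Rightarrow(2)$ is the soft half: given $s=\tpl{s}\in\n^n$ with $\del_n(s)\le\e$, I would pick any irrational $\alpha$ and set $a=(s_1\alpha,\dots,s_n\alpha)$, which lies in $\I^n$ because each $s_i$ is a nonzero integer. The function $t\mapsto\min_{1\le i\le n}\norm{s_it}$ is continuous and has period $1$ (all $s_i\in\z$), and $\set{q\alpha\bmod1:q\in\z}$ is dense in $[0,1)$, so its supremum over $q\in\z$ equals its supremum over $t\in\R$; hence $\ld_n(a)=\del_n(s)\le\e$.

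For $(2)\Rightarrow(1)$ the key input is Lemma 6. Given $a=\tpl{a}\in\I^n$ with $\ld_n(a)\le\e$, I would first translate the hypothesis into geometry: $\ld_n(a)\le\e$ says exactly that every point of the orbit $O(a)=\set{qa}_{q\in\z}\con\T^n$ has some coordinate within $\e$ of an integer, i.e., that $O(a)$ is disjoint from the open cube $(\e,1-\e)^n$ centered at $(\f2,\dots,\f2)$, and since that cube is open the same holds for $\cl{O(a)}$. Lemma 6 furnishes an irrational $\alpha$ and integers $s_i$ with $\cl{O(a')}\con\cl{O(a)}$ for $a'=(s_1\alpha,\dots,s_n\alpha)$, so $O(a')$ likewise misses that cube: $\min_i\norm{qs_i\alpha}\le\e$ for every $q\in\z$. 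Running the density-and-periodicity argument of the first paragraph in reverse (and using $\norm x=\norm{-x}$) then upgrades this to $\del_n(\lvert s\rvert)=\sup_{t\in\R}\min_i\norm{s_it}\le\e$, where $\lvert s\rvert=(\lvert s_1\rvert,\dots,\lvert s_n\rvert)$ — which is statement $(1)$, provided $\lvert s\rvert\in\n^n$, i.e., provided no $s_i$ is $0$.

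That last proviso is the step I expect to be the real obstacle, since Lemma 6 as stated only guarantees that $m$ of the $s_i$ equal a common nonzero $c$, and a vanishing coordinate would make $\del_n(\lvert s\rvert)$ collapse to $0$ and the argument carry no information. I would repair this inside the proof of Lemma 6: after clearing denominators one has $a_j=\sum_{k=1}^m\tilde r_k^j b_k'$ with $\tilde r_k^j\in\z$ and $\set{b_k'}$ a rationally independent basis of irrationals, and one may aim the simultaneous approximations at $n_k\alpha$ (rather than at $\alpha$) for integers $n_1,\dots,n_m$ chosen so that every $p_j:=\sum_k\tilde r_k^j n_k$ is nonzero. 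Such $n_k$ exist because each linear form $n\mapsto\sum_k\tilde r_k^j n_k$ on $\z^m$ is nonzero (as $a_j\ne0$) and $\z^m$ is not a union of finitely many proper subgroups. This produces $\cl{O((p_1\alpha,\dots,p_n\alpha))}\con\cl{O(a)}$ with all $p_j\ne0$, exactly the form of Lemma 6 used above.

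Finally, the infimum identity is pure bookkeeping: by the equivalence just proved, the set $\set{\e\ge0:\exists\,s\in\n^n,\ \del_n(s)\le\e}$ equals $\set{\e\ge0:\exists\,a\in\I^n,\ \ld_n(a)\le\e}$, and the infimum of the first set is $\inf_{s\in\n^n}\del_n(s)$ while that of the second is $\inf_{a\in\I^n}\ld_n(a)$; hence these two infima are equal.
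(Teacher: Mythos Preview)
Your proof is correct and follows essentially the same route as the paper: $(1)\Rightarrow(2)$ via $a=\alpha s$ for an irrational $\alpha$, and $(2)\Rightarrow(1)$ via Lemma~6 together with Kronecker density to pass from the integer orbit of $a'=(s_1\alpha,\dots,s_n\alpha)$ to the full line $\{(s_1t,\dots,s_nt):t\in\R\}$ inside the closed set $S(\e)$. Your worry that some $s_i$ from Lemma~6 might vanish is well placed---the paper simply asserts $(|s_1|,\dots,|s_n|)\in\n^n$ without comment---and your proposed repair (aiming the simultaneous approximations at $n_k\alpha$ with $(n_1,\dots,n_m)\in\z^m$ chosen off the finitely many integer hyperplanes $\sum_k\tilde r_k^{\,j}n_k=0$) is a correct way to close that gap.
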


\begin{proof}
(1) $\implies$ (2).
Assume (1) holds, that is, $\del_n(s) \leq \e$. Then for $\alpha \in \I$ and an integer $q$, set $t = q \alpha$ and $s_i \alpha = a_i$ for $i=1,\dots,n$. Then $a = \tpl{a} \in \I^n$ and 
$$ \tx{min}_{1 \leq i \leq n} \norm{q a_i} = \tx{min}_{1 \leq i \leq n} \norm{s_i t} \leq \e.$$
As $q$ was an arbitrary integer, $(2)$ follows.

(2) $\implies$ (1).
Assume (2) holds above, with $\ld_n(a) \leq \e$. Consider the set of reals 
$$S(\e) = \set{\tpl{t} \in \R^n: \tx{min}_{1 \le i \le n} \norm{t_i} \, \le \, \e}$$
First notice that $\cl{Q(a)} \con S(\e)$ as min$_i \norm{q a_i -t_i}$ = min$_i \norm{q a_i} \leq \e$ for any $q, t_i \in \z$, and because $S(\e)$ is closed. So by the lemma there is an irrational $\alpha$ and 
integers $s_i$ such that setting $a' = (s_1 \alpha,\dots,s_n \alpha)$ yields $\cl{Q(a')} \con \cl{Q(a)} \con S(\e)$. Now by definition 
$$Q(a') = \set{\tpl{x}: x_i = q \, s_i \, \alpha - k_i, \, q, \, k_i \in \z}$$
so by Kronecker's approximation theorem, for each $t \in [0,1)$, there is a sequence of integers $\set{q_k}_{k=1}^ \infty$ such that $\norm{q_k \alpha - t} \to 0$ as $k \to \infty$. That is, $q_k \alpha \to t$ in $\norm{\, \cdot \,}$ so that $s_i \, q_k \, \alpha \to s_i t$ under $\norm{\, \cdot \,}$.  Since $q_k \, \alpha$ mod(1) is in $Q(a)$, it follows that $(s_1 t,\dots,s_n t) \in \cl{Q(a')}$. It thus follows that the set $\set{\tpl{x} : x_i = s_i t - k_i, \, t \in \R, \, k_i \in \z} \con \cl{Q(a')} \con S(\e)$. We show that $s = (|s_1|,\dots,|s_n|) \in \n^n$ is the required element with $\del_n(s) \le \e$: But this is immediate as for any $t \in \R$ we have $\tpl{ts} \in S(\e)$, hence $\e \ge$ min$_{1 \le i \le n} \norm{s_i t}$ = min$_{1 \le i \le n} \norm{ |s_i| t}$. Thus $\del_k(s) \le \e$ and (1) holds.
\end{proof}

For $n \in \n$, set $\kappa(n) = \ds\inf_{a \in \I^n} \ld_n(a)$, then 

\begin{proposition}
The lonely runner conjecture is equivalent to $\k(n) \ge 1/(n+1)$ for every $n \in \n$.
\end{proposition}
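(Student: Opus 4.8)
The plan is to deduce this proposition directly from Theorem 7 and Proposition 4, with no new ideas required. The key observation is that Theorem 7 identifies, for each fixed $n$, the diophantine infimum $\inf_{a\in\I^n}\ld_n(a)$ with the ``arithmetic'' one $\inf_{s\in\n^n}\del_n(s)$, while Proposition 4 already recasts the lonely runner conjecture in terms of the latter; so it only remains to substitute one for the other.

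Concretely, I would proceed in two steps. First, by the definition $\k(n) = \inf_{a \in \I^n}\ld_n(a)$ together with the final assertion of Theorem 7 (the equality $\inf_{a\in\I^n}\ld_n(a) = \inf_{s\in\n^n}\del_n(s)$), we obtain $\k(n) = \inf_{s\in\n^n}\del_n(s)$ for every $n \in \n$. Second, Proposition 4 states that the lonely runner conjecture is equivalent to ``$\inf_{s\in\n^k}\del_k(s) \ge 1/(k+1)$ for all $k \in \n$''; replacing $\inf_{s\in\n^k}\del_k(s)$ by $\k(k)$ using the first step, and renaming the index $k$ as $n$, this becomes exactly ``$\k(n) \ge 1/(n+1)$ for every $n \in \n$.'' Since each of these passages is an equivalence, the lonely runner conjecture is equivalent to the displayed condition, which completes the proof.

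The only point worth a word of comment, rather than a genuine obstacle, is why Theorem 7 delivers an \emph{equality} of infima and not merely an inequality: for each fixed $\e \ge 0$ it provides the biconditional ``$\exists\, s \in \n^n$ with $\del_n(s) \le \e$'' $\iff$ ``$\exists\, a \in \I^n$ with $\ld_n(a) \le \e$'', and applying this at a value $\e = \del_n(s_0)$ actually attained by $\del_n$ (and symmetrically for $\ld_n$) forces the two infima to coincide; this is precisely the equality already recorded at the end of Theorem 7's statement. Thus all of the real content lives in Lemma 6 and Theorem 7 — the bridge between the arithmetic function $\del_n$ and the diophantine function $\ld_n$ — and in Proposition 4, and the present proposition is simply their composition.
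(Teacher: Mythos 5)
Your proposal is correct and matches the paper exactly: the paper's own proof is the single line ``This follows immediately from Theorem 7 and Proposition 4,'' and your two-step substitution (identify $\k(n)$ with $\inf_{s\in\n^n}\del_n(s)$ via the equality of infima in Theorem 7, then invoke Proposition 4) is precisely that argument spelled out. Your closing remark on why Theorem 7 yields an equality of infima rather than an inequality is a reasonable elaboration of what the paper already records in the theorem's statement.
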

This follows immediately from Theorem 7 and Proposition 4.
\\
The first and original conjecture, which is equivalent to the LRC, was that $\kappa(n) \ge 1/(n+1)$ for every positive natural number $n$. The first attempts at a general solution were by method of establishing sufficient bounds for $\kappa$. When making these attempts it is has been profitable to exploit Theorem 7 and find bounds for the function $\del_k$. Since the definition of $\del_k$ is more discrete, it is easier to apply simple combinatorial results or methods that give equivalent results for $\kappa$ that are otherwise non-obvious. Even the pigeon hole principle provides us with a sharp upper bound for $\kappa$, that is

\begin{proposition}
For any natural number n, $\kappa(n) \leq 1/(n+1)$.
\end{proposition}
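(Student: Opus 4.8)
The plan is to exhibit a single tuple achieving the bound and then invoke Theorem 7. By Theorem 7 we have $\k(n) = \ds\inf_{a \in \I^n}\ld_n(a) = \ds\inf_{s \in \n^n}\del_n(s)$, so it suffices to produce one $s \in \n^n$ with $\del_n(s) \le 1/(n+1)$. The natural candidate is $s = (1, 2, \dots, n)$, i.e.\ runners of consecutive integer speeds.

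First I would fix an arbitrary $t \in \R$ and consider the $n+1$ fractional parts of $0\cdot t,\ 1\cdot t,\ \dots,\ n\cdot t$ as points of the circle $\R/\z$. Partitioning the circle into the $n+1$ arcs $[j/(n+1), (j+1)/(n+1))$ for $j = 0, \dots, n$ and applying the pigeonhole principle, two of these points, say those coming from $it$ and $jt$ with $0 \le i < j \le n$, lie in the same arc; hence $\norm{(j-i)t} < 1/(n+1)$. Since $1 \le j - i \le n$, this gives $\ds\min_{1\le \ell \le n}\norm{\ell t} \le 1/(n+1)$.

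As $t$ was arbitrary, $\del_n(1,\dots,n) = \ds\sup_{t \in \R}\ds\min_{1 \le \ell \le n} \norm{\ell t} \le 1/(n+1)$, whence $\ds\inf_{s\in\n^n}\del_n(s) \le 1/(n+1)$, and the claim follows from the equality in Theorem 7.

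There is no serious obstacle here; the only points requiring care are counting $n+1$ points (remembering to include the point coming from the index $0$, equivalently a speed difference of $0$) against $n+1$ arcs, and observing that $\del_n$ is defined with no distinctness hypothesis, so that $(1,\dots,n)$---which is in any case a tuple of distinct speeds---is a legitimate input. One could equally run the same pigeonhole argument directly on $a = (\alpha, 2\alpha, \dots, n\alpha)$ for an irrational $\alpha$ to bound $\ld_n(a)$, but routing through $\del_n$ keeps the argument purely combinatorial, in the spirit of the surrounding discussion.
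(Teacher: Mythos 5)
Your overall route---reduce via Theorem 7 to bounding $\del_n(1,2,\dots,n)$ and then run a Dirichlet box argument on that single tuple---is exactly the paper's, but the pigeonhole step as you have written it does not go through. You place the $n+1$ fractional parts $\set{0\cdot t}, \set{1\cdot t},\dots,\set{nt}$ into the $n+1$ half-open arcs $[j/(n+1),(j+1)/(n+1))$; with equally many points as arcs, the pigeonhole principle does not force two points into the same arc. Indeed, at the extremal time $t=\f{n+1}$ the points are exactly $0, \f{n+1},\dots,\fr{n}{n+1}$, one per arc, so your conclusion that some $\norm{(j-i)t}<\f{n+1}$ is false for that $t$---as it must be, since there the minimum equals $\f{n+1}$ rather than being strictly smaller.

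The gap is easily repaired because you only need the non-strict bound $\min_{1\le \ell\le n}\norm{\ell t}\le \f{n+1}$. Split into two cases: if two of the $n+1$ points share an arc, argue as you did; if not, every arc contains exactly one point, and since $\set{0\cdot t}=0$ occupies the first arc, the top arc $[\fr{n}{n+1},1)$ contains $\set{jt}$ for some $1\le j\le n$, giving $\norm{jt}\le \f{n+1}$ directly. (The paper sidesteps the issue differently: it assumes for contradiction that $\norm{it}>\f{n+1}$ for all $i$, so that the $n$ points $\set{it}$, $1\le i\le n$, all lie in the interval $(\f{n+1},\fr{n}{n+1})$ of length $\fr{n-1}{n+1}$, and then observes that the $n-1$ gaps between consecutive sorted points would each exceed $\f{n+1}$, for a total length exceeding that of the containing interval.) With either repair your argument is correct and coincides with the paper's.
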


This result has a couple of interesting consequences to the LRC. For one, it tells us that the our definition of ``loneliness", as stated in Definition 1, cannot be relaxed any further if there is to be any hope of the LRC holding. Secondly, the proof of the above proposition provides the speeds of the runners for which the loneliness condition is tight. Consider the example with three runners with speeds of 1, 2 and 3 units per second. We stated without proof that without a definition of loneliness that accounts for the number of runners, the first runner is never lonely. Here we prove a corresponding result with $n$ runners. That is, if there are $n$ runners on the track with speeds of $1,2,\dots,n$, then there is not a time when all the other runners are further than $1/n$ from the first runner. We now prove the proposition using the pigeon hole principle, or more precisely Dirichlet's box principle.

\begin{proof}
We use the fact that $\kappa(n) = \ds\inf_{s \in \n^n} \del_n(s)$ for every $n \in \n$, according to Theorem 7. That is, we show

$$\del_n(1,2,\dots,n) = \ds\sup_{t \in \, \R} \, \ds\min_{1 \le i \le n} \norm{i t} = \ds\sup_{t \, \in \, [0,1]} \ds\min_{1 \le i \le n} \norm{i t}  = \f{n+1}.$$

We show that the supremum occurs at $t_0 = \f{n+1}$. Notice that

$$\displaystyle\min_{1 \le i \le n} \norm{i t_0} =  \tx{min} \set{\norm{1/(n+1)},\norm{2/(n+1)},...,\norm{n/(n+1)}} = \ds\f{n+1}.$$

It also follows that for any $t \in [0, 1/(n+1)$ we have $\min_{1 \le i \le n} \norm {i t} \le \norm{1 t} < \f{n+1}$.
Thus we assume for contradiction that there is a time $t$ when $\norm{i \, t} > 1/(n+1)$ for all $1 \le i \le n$. It follows that $t \in (\f{n+1}, 1]$. Since $t$ is obviously not $1$, the set of points $\set{\norm{i t}}_{i=1}^n = \set{a_i}_{i=1}^n$ form a partition of the unit interval. By assumption no $a_i$ is within $1/(n+1)$ of 0 or 1, so that $\set{a_i}_1^n \con (\f{n+1}, \fr{n}{n+1}) = I$. These $n$ points in $I$ form $n-1$ closed ``boxes", i.e., intervals, $b_1,\dots,b_{n-1}$ where $b_1$ = $[a_{i_0}, a_{i_1}]$ in which $a_{i_0}$ is the closest element in $\set{a_i}_1^n$ to 0, $a_{i_1}$ is the next greater,...etc. It is tempting to think that letting the boxes have the form $[a_{i}, a_{i+1}], \, 1 \le i \le n,$ would suffice; but this will not generally work as some boxes may overlap: this is because we are considering $a_i$=$i \, t$ mod1, so some of the $a_i$ will wrap around the unit interval, possibly altering their inherent order. Thus defined, a given box $b_k$ has length 
$\norm{a_{i_k} - a_{i_{k+1}}} = \norm{i t - j t}$ for some $1 \le i < j \le n$. But this is $\norm{(i - j)t}$ and $i-j$ is a number in between 1 and $n$, so by assumption, $\norm{(i-j)t} > \f{n+1}$. 
Thus each box $b_k$ has length greater than $1/(n+1)$, and since there are $n-1$ boxes, their total length is strictly greater than $(n-1)\, \f{n+1} = \fr{n-1}{n+1}$. But these are inside $I$, which has length $1 - \fr{2}{n+1} = \fr{n-1}{n+1}$, and this is impossible as the total length of the disjoint (excepting their boundaries) boxes $b_1,\dots,b_{n-1}$ would be greater than a box $I$ containing them. Thus no such time $t$ exists, and the proof is complete.
\end{proof}

In light of Proposition 4, there are really $n+1$ runners, with the first having 0 speed. This above proposition says that for any $n$ runners with speeds of $\set{1+a, 2+a, 3+a,..., n+a}$, the
first runner with speed of $1+a$ is never separated by more than $1/n$ from the other runners. We now prove the LRC for three runners

\begin{theorem}
The lonely runner conjecture holds for 3 runners, that is, we have $\kappa(2) = \ds\inf_{s \in \n^2} \del_2(s) = \f{3}$.
\end{theorem}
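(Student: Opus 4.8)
The plan is to establish the two inequalities separately and invoke the earlier reductions. Proposition 9 already gives $\k(2) \le \f{3}$, and Theorem 7 (with $n = 2$) identifies $\k(2)$ with $\ds\inf_{s \in \n^2}\del_2(s)$, so everything comes down to proving $\del_2(a,b) \ge \f{3}$ for every $(a,b) \in \n^2$; by the remark following Proposition 4 this lower bound is precisely the statement that the LRC holds for $3$ runners, and together with Proposition 9 it yields $\k(2) = \f{3}$.

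First I would normalize the speeds. Substituting $t' = ct$ in the definition shows $\del_2(ca,cb) = \ds\sup_t \min\set{\norm{ca\,t},\norm{cb\,t}} = \ds\sup_{t'}\min\set{\norm{a\,t'},\norm{b\,t'}} = \del_2(a,b)$, so I may divide $a$ and $b$ by $g = \gcd(a,b)$ and assume $\gcd(a,b) = 1$; note that then $a+b \ge 2$.

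The key step is to sample only the times $t_k = k/(a+b)$, $k \in \z$. Since $a + b \equiv 0 \pmod{a+b}$ we have $b \equiv -a \pmod{a+b}$, hence $bk \equiv -ak \pmod{a+b}$ and therefore $\norm{b t_k} = \norm{-a t_k} = \norm{a t_k}$; along this family the two-variable minimum collapses to the single quantity $\norm{a t_k} = \norm{ka/(a+b)}$. Because $\gcd(a,a+b) = \gcd(a,b) = 1$, the map $k \mapsto ak \bmod (a+b)$ is a bijection of $\z/(a+b)\z$, so I can choose $k$ with $ak \equiv r \pmod{a+b}$ for any prescribed residue $r$. I would take $r$ to be an integer in the interval $[(a+b)/3,\, 2(a+b)/3]$: such an $r$ exists because the interval has length $\ge 1$ when $a+b \ge 3$ and contains $1$ when $a+b = 2$, and in either case $0 < (a+b)/3 \le r \le 2(a+b)/3 < a+b$ forces $r \in \set{1,\dots,a+b-1}$. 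For this choice,
$$\norm{a t_k} = \norm{\fr{r}{a+b}} = \min\set{\fr{r}{a+b},\ 1 - \fr{r}{a+b}} \ge \f{3},$$
so $\del_2(a,b) \ge \f{3}$, which with Proposition 9 completes the argument.

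As for difficulty: there is no real obstacle, and every ingredient is elementary — the scaling-invariance of $\del_2$, the surjectivity of multiplication by $a$ modulo $a+b$, and the triviality that an interval of length $\ge 1$ contains an integer. The one genuinely clever move, and the step one actually has to think of, is choosing $t = k/(a+b)$: it forces $\norm{at} = \norm{bt}$ and thereby reduces the $3$-runner problem to exactly the one-dimensional pigeonhole count already used in the proof of Proposition 9.
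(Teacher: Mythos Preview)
Your proof is correct, and it takes a genuinely different route from the paper's. The paper argues by contradiction: assuming $\del_2(k_1,k_2)<\f{3}$ with $k_1\le k_2$, it first evaluates at $t_1=1/(3k_1)$ to force $\norm{k_2 t_1}<\f{3}$, deduces $k_2/k_1>2$, and then picks an integer $g'\in\{g,g+1\}\subset[k_2/k_1,\,2k_2/k_1]$ with $3\nmid g'$ so that $t_2=g'/(3k_2)$ gives $\norm{k_1 t_2}\ge\f{3}$ and $\norm{k_2 t_2}=\f{3}$. Your argument is instead direct and rests on a single idea the paper does not use here: sampling at $t=k/(a+b)$ collapses $\min\{\norm{at},\norm{bt}\}$ to the single quantity $\norm{ak/(a+b)}$, after which the bijectivity of multiplication by $a$ on $\z/(a+b)\z$ lets you place the residue in $[\f{3},\fr{2}{3}]$. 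Your route is shorter and avoids the somewhat delicate case analysis in the paper; it also dovetails nicely with Proposition~33 later on, which shows that the optimal time for $\del(S)$ is always of the form $a/(s_i+s_j)$, so your choice of denominator $a+b$ is in fact the canonical one. The paper's approach, by contrast, exhibits explicit witnesses of the form $t=g'/(3k_2)$ and so ties in more directly with the denominator $3=k+1$ that governs the loneliness threshold.
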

\begin{proof}
We show that $\del_1(s) \ge 1/3$ for every $s \in \n^2$. Assume for sake of contradiction that there is a $k = (k_1, k_2) \in \n^2$ where $\del_2(k) = \ds\sup_{t \in \R} \min \set{\norm{k_1 t}, \norm{k_2 t}} < 1/3$.
Without loss of generality say $k_1 \le k_2$. Set $t_1 = \f{3k_1}$ so that $0 \le t_1 \le 1$ as $k_1 \ge 1$, and we have $\norm{k_1 t} = 1/3$. Then by assumption,
min$\set{\norm{k_1 t_1}, \norm{k_2t_1}} < 1/3$ so we have $\norm{k_2 t_1} \le \alpha < 1/3$. Now $k_1 \le k_2$ implies $k_2t_1=\fr{k_2}{3k_1} \ge \f{3} > \alpha$ so that
$\norm{k_2/(3k_1)} \ge 1-\alpha > 2/3$ following from the definition of the norm. Then $\fr{k_2}{k_1} > 2,$ so there exists a natural number $g$ with 

$$\fr{k_2}{k_1} \le g < g+1 \le \fr{2k_2}{k_1},$$
and multiplying both sides by $\fr{k_1}{k_2}$ yields

\begin{equation} \label{clay}
1 \le \fr{k_1}{k_2}g < \fr{k_1}{k_2}(g+1) \le 2.
\end{equation}

It follows from the fact that $g$ is a natural number, that either $g$ or $g+1$ is not divisible by 3. Select $g' \in \set{g, g+1}$ so that 3 $\nmid g'$, and set $$t_2 = \fr{g'}{3k_2},$$ by dividing $\eqref{clay}$ by 
$3k_1$, and by dividing $\eqref{clay}$ by 3, we have the inequalities
$$ 0 \le t_2 \le 1, \, 1/3 \le k_1 t_2 \le 2/3.$$ 
It follows from the above that $\norm{k_1t_2} \ge 1/3$. We also have $k_2t_2 = g'/3$, and since $g'$ is not divisible by 3, it easily follows that $\norm{k_2 t_2} = \norm{g' /3} = 1/3$. But this contradicts the hypothesis that $\del_2(k) = \ds\sup_{t \in \R} \, \min \set{\norm{k_1t}, \norm{k_2 t}} \le \alpha < 1/3$. Hence, there is no such $k \in \n^2$ and so by Theorem 7 and Proposition 9, $\kappa(2) = 1/3$. By Proposition 4, the lonely runner conjecture holds for three runners.
\end{proof}

Since the LRC is known only up to and including 7 runners, $\kappa(n)$ is known to be $1/(n+1)$ for $n$ up to and including 6. Yet we do have the following bounds on $\kappa(n)$ for any $n \in \n$.

\begin{proposition}
For all $n \in \n$, $\f{2n} \le \kappa(n) \le \f{n+1}$.
\end{proposition}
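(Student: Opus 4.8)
The plan is the following. The right-hand inequality $\k(n) \le \f{n+1}$ is exactly Proposition 9 (the pigeonhole upper bound), so all that remains is the lower bound $\k(n) \ge \f{2n}$. By Theorem 7 we have $\k(n) = \ds\inf_{s \in \n^n} \del_n(s)$, so it suffices to show
$$\del_n(s) = \ds\sup_{t \in \R}\ds\min_{1 \le i \le n}\norm{s_i t}\ \ge\ \f{2n} \qquad \tx{for every } s = \tpl{s} \in \n^n .$$
I would prove this by a short measure-theoretic (union-bound) argument on the unit interval.

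First I would fix $s \in \n^n$ and argue by contradiction. Suppose $\del_n(s) < \f{2n}$, and choose a number $c$ with $\del_n(s) < c < \f{2n}$; the strict inequality here is essential. For $1 \le i \le n$ set $B_i = \set{t \in [0,1) : \norm{s_i t} < c}$. Since each $s_i \in \z$, the function $t \mapsto \ds\min_{1 \le i \le n}\norm{s_i t}$ is $1$-periodic, and it is everywhere bounded above by $\del_n(s) < c$; hence every $t \in [0,1)$ lies in some $B_i$, so that $[0,1) = \bigcup_{i=1}^n B_i$. On the other hand, the map $t \mapsto s_i t \bmod 1$ is a measure-preserving, $s_i$-to-one self-map of $[0,1)$, and $\set{x \in [0,1) : \norm{x} < c}$ has Lebesgue measure $2c$ because $0 < c < \f{2}$; therefore each $B_i$ has measure exactly $2c$. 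The union bound then gives
$$1 = |[0,1)|\ \le\ \sum_{i=1}^n |B_i|\ =\ 2cn\ <\ 2n \cdot \f{2n}\ =\ 1 ,$$
a contradiction. Hence $\del_n(s) \ge \f{2n}$ for every $s \in \n^n$, and taking the infimum over $s$ yields $\k(n) \ge \f{2n}$, completing the proof.

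The one step I would be careful about — and the only place a little work is actually needed — is the claim $|B_i| = 2c$. The point is that $\set{t \in [0,1) : \norm{s_i t} < c}$ is a disjoint union of $s_i$ intervals, one inside each period $[\,j/s_i,(j+1)/s_i\,)$, each of length $2c/s_i$; this follows from the $s_i$-fold periodicity of $t \mapsto \set{s_i t}$ on $[0,1)$ together with the fact that, since $c \le \f{2}$, the set $\set{x : \norm{x} < c}$ is a single arc of length $2c$ about $0$ in $\T$. Everything else is the elementary union bound, with the strict inequality $c < \f{2n}$ supplying exactly the slack needed to contradict $[0,1) = \bigcup_i B_i$. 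Equivalently, one may phrase the conclusion positively: the complement $\set{t \in [0,1) : \norm{s_i t} \ge \f{2n}\ \tx{for all } i}$ has positive measure, hence is nonempty, which directly produces a time $t$ witnessing $\del_n(s) \ge \f{2n}$.
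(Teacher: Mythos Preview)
Your proof is correct and follows essentially the same route as the paper's: both compute that $\set{t \in [0,1] : \norm{s_i t} \le \e}$ has Lebesgue measure $2\e$ via the $s_i$-fold periodicity, then cover $[0,1]$ by the $n$ sets and apply the union bound to force $1 \le 2n\,\del_n(s)$. The only cosmetic difference is that the paper works directly with the threshold $\e = \del_n(k)$, while you argue by contradiction through an intermediate $c$ with $\del_n(s) < c < \f{2n}$; the underlying idea is identical.
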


\begin{proof}
By Proposition 8, we have $\kappa(n) \le \f{n+1}$. We show that $\del_n(k) \ge \f{2n}$ for any given $k = \tpl{k} \in \n^n$. For $\e \in [0, 1/2]$ and $s \in \n$ we have $$\norm{st} \le \e \tx{ when } 0 \le t \le \fr{\e}{s},$$ and $$\norm{st} > \e, \tx{ for } \fr{\e}{s} < t < \fr{1 - \e}{s}.$$
It follows that the interval with $t \in [0,1/s]$ having $\norm{st} \le \e$ has a length of $\fr{2\e}{s}$. Since $\norm{st} = \norm{s(t+\f{s})},$ there are $s$ such intervals in [0,1]. Call the union of these intervals $I$. Then $I \con [0,1]$ has length of $2\e$, and every $t \in I$ has $\norm{st} \le \e$. 

Now let $k=\tpl{k} \in \n^n$ be arbitrary. For each $1\le i \le n$, define 
$$J_i(k) = \set{t \in [0,1]: \norm{k_it} \le \del_n(k)}.$$
Thus the length of each $J_i(k)$ is $2\del_n(k)$. Also, each $t \in [0,1]$ must belong to some $J_i(k)$, since $\ds\min_{1 \le i \le n} \norm{k_i t} \le \ds\sup_{t \in \R} \ds\min_{1 \le i \le n} \norm{k_i t} = \del_n(k)$. Hence there is some $i$ with $\norm{k_it} \le \del_n(k)$. Thus, 

$$[0,1] \con \bigcup_{i=1}^n J_i(k),$$

so the length of [0,1] is less than or equal to the sum of the lengths of the $J_i(k)'s$. That is, 
$$1 \le 2n\del_n(k),$$ so
$$\f{2n} \le \del_n(k), \, \forall k \in \n^n.$$ Hence,

$$\f{2n} \le \ds\inf_{k \, \in \, \n^2} \del_n(k).$$ It then follows from theorem 7 that $\kappa(n) \ge \f{2n}$.
\end{proof}

Given $n$ runners on a unit track, this lower bound for $\kappa$ shows that eventually each runner will be sufficiently separated from the others.

\begin{proposition}
Let $n$ runners with distinct fixed integer speeds be traveling on a circle with unit circumference. For each runner there is a time when it is separated from every other runner by a distance of $\f{2(n-1)}$.
\end{proposition}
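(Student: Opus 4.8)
The plan is to derive this immediately from the lower bound $\k(n)\ge\f{2n}$ just established, by repeating the reduction used in the second half of the proof of Proposition 4 but with $\f{2(n-1)}$ in place of the conjectural $\f{n}$. First I would fix an arbitrary one of the $n$ runners, say the one with integer speed $s_i$, and pass to the reference frame in which that runner stands still; the remaining $n-1$ runners then move with integer speed differences $s_j-s_i$, $j\neq i$. Since the original speeds are pairwise distinct, the vector $s'=(|s_1-s_i|,\dots,|s_{i-1}-s_i|,|s_{i+1}-s_i|,\dots,|s_n-s_i|)$ lies in $\n^{n-1}$, so the function $\del_{n-1}$ can be evaluated at it.

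Next I would apply the preceding proposition at the index $n-1$, which gives $\del_{n-1}(s')\ge\k(n-1)\ge\f{2(n-1)}$. The map $t\mapsto\ds\min_{j\neq i}\norm{(s_j-s_i)t}$ is continuous and $1$-periodic, so the supremum defining $\del_{n-1}(s')$ is attained at some $t_0\in[0,1]$; hence $\norm{(s_j-s_i)t_0}\ge\f{2(n-1)}$ for every $j\neq i$. Translating back to the track, the distance measured along the unit circle between the runner with speed $s_i$ and the runner with speed $s_j$ at time $t_0$ equals $\norm{(s_j-s_i)t_0}$, so at time $t_0$ the chosen runner is separated from every other runner by at least $\f{2(n-1)}$. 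Since $i$ was arbitrary, the proposition follows.

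I do not expect a real obstacle: the statement is just a repackaging of the bound $\k(n-1)\ge\f{2(n-1)}$ in exactly the way Conjecture 2 is extracted from the hypothesis $\del_{n-1}\ge\f{n}$ inside the proof of Proposition 4. The only points deserving a careful word are the index shift ($n$ runners on the track correspond to the function $\del_{n-1}$, because fixing one runner leaves $n-1$ moving ones), the remark that the supremum in the definition of $\del_{n-1}$ is genuinely attained (compactness of $[0,1]$ together with periodicity), and the observation that possible coincidences among the differences $s_j-s_i$ cause no difficulty, since $\del_{n-1}$ is defined on all of $\n^{n-1}$ and not merely on tuples with pairwise distinct entries. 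One should also restrict to $n\ge2$, the case $n=1$ being vacuous.
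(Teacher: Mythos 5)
Your proposal is correct and follows essentially the same route as the paper: fix the runner with speed $s_i$, form the $(n-1)$-tuple of absolute speed differences, and apply the lower bound $\del_{n-1}\ge\f{2(n-1)}$ from the preceding proposition. Your added remark that the supremum is actually attained (by continuity and periodicity) is a small point the paper glosses over, but it is not a different argument.
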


\begin{proof}
Let $n \in \n$, and $s = \tpl{s} \in \z^n$, where $s_j \neq s_i$ for $j \neq i$. Fix $i$, so that $s_i$ represents the speed of the $i$-th runner. Let $r_j = |s_j - s_i|$. Then $r_j \in \n$ when $i \neq j$, so that the number of $r_j$'s which are nonzero is $n-1$. By Proposition 12, 

$$\ds\sup_{t \, \in \, \R}\,\ds\min_{j \neq i} \norm{r_j t} \geq \f{2(n-1)}.$$

Hence there is a time $t$ when $\norm{r_j t} =\norm{|s_j-s_i|t} = \norm {(s_j-s_i)t} \geq \f{2(n-1)}$ for $j \neq i$. This proves the result.
\end{proof}

As previously mentioned, the LRC was originally formulated as the following:

\begin{conj}
For all $n \in \n$, $\kappa(n) = \f{n+1}$.
\end{conj}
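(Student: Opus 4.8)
Since Proposition 8 already gives $\k(n) \le \f{n+1}$, and Proposition 10 shows that the reverse inequality $\k(n) \ge \f{n+1}$ is literally the lonely runner conjecture, a proof of this statement \emph{is} a proof of the LRC; accordingly I can only propose a line of attack and indicate where it stalls. The plan is to pass to the discrete function $\del_n$ by Theorem 7 and establish $\del_n(s) \ge \f{n+1}$ for every $s = \tpl{s} \in \n^n$; moreover, as observed in connection with Proposition 4, it suffices to treat $s \in C_n$, that is, tuples of \emph{distinct} positive integers, and the cases $n \le 6$ are already known (Theorem 14 and the discussion following it).

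The natural first move is the covering reformulation implicit in the proof of Proposition 12. Fix $s$ and put $\e = \f{n+1}$; for each $i$ let $U_i = \set{t \in [0,1] : \norm{s_i t} < \e}$, a union of $s_i$ open arcs of total length $\fr{2}{n+1}$. One checks, using that $t \mapsto \ds\min_{1 \le i \le n}\norm{s_i t}$ is continuous and periodic, that $\del_n(s) \ge \e$ precisely when the arcs $U_i$ fail to cover $[0,1]$; at a point $t_0$ avoiding every $U_i$, each runner of speed $s_i$ sits at distance $\ge \e$ from the origin. Now $\sum_{i=1}^n |U_i| = \fr{2n}{n+1} > 1$ for $n \ge 2$, and this surplus is exactly the factor-of-two slack that causes Proposition 12 to yield only $\f{2n}$. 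So the whole difficulty is to exploit the arithmetic structure of $\set{s_i}$ to certify that arcs of total length exceeding $1$ nonetheless miss a point of the circle.

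I would pursue this by induction on $n$, imitating the known low-dimensional proofs: split into cases according to the residues of the $s_i$ modulo $n+1$ (or modulo a cleverly chosen integer), apply Dirichlet's box principle as in Proposition 8 in the generic case, and in the degenerate cases peel off one runner to reduce to $\del_{n-1} \ge \f{n}$. An alternative is to work on the irrational side with $\ld_n$: Lemma 6 lets one replace an arbitrary $a \in \I^n$ by $a' = \al(s_1,\dots,s_n)$ in which $s_i = c$ for $v$ of the indices, where $v$ is the rational rank of $\set{a_i}$, so that $\cl{O(a')}$ is a $v$-dimensional subtorus; one then hopes to induct on $v$. This is precisely the mechanism behind Proposition 5, but there it only transports an \emph{already known} case of the LRC rather than creating a new one.

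The main obstacle is exactly the gap between the easy bound $\f{2n}$ and the target $\f{n+1}$. The overlaps of the arcs $U_i$ near $t = 0$ make any purely measure-theoretic or Fourier-analytic estimate lossy by a constant factor, and the known refinements (exponential-sum bounds, rearrangement and gap arguments, the case analyses that push through $n \le 6$) still fall well short of $\f{n+1}$ for large $n$ — no method is known that gives even $\f{n+1} - o(1/n)$ uniformly in $n$. Closing that gap would require a genuinely new, dimension-uniform way to certify that a union of arithmetic-progression arcs of total length slightly above $1$ still leaves a point uncovered, and producing such a certificate is, at present, the open heart of the problem.
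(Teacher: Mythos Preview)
Your reading is correct: in the paper this statement is a \emph{conjecture} (Conjecture~14), presented as Wills' original formulation of the Lonely Runner Conjecture, and the paper offers no proof --- it establishes only the upper bound $\k(n)\le\f{n+1}$, the equivalence of the lower bound with the LRC, the easy covering bound $\k(n)\ge\f{2n}$, and the single case $n=2$. Your refusal to claim a proof, and your identification of the factor-of-two loss in the covering argument as the essential obstruction, match the paper's own stance exactly.

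One bookkeeping note: your cross-references are shifted. In the paper's numbering the upper bound $\k(n)\le\f{n+1}$ is Proposition~9 (not~8), the equivalence ``$\k(n)\ge\f{n+1}\Leftrightarrow$ LRC'' is Proposition~8 (not~10), and there is no ``Theorem~14''; the fact that the cases $n\le 6$ are settled is mentioned only informally in the introduction.
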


%
\section{The View-Obstruction Problem}

In 1971, a few years after Wills' results were released, Thomas W. Cusick gave an equivalent reformulation of conjecture 13 as a conjecture in view-obstruction.
Let $E_n$ denote the region in $\R^n$ where all coordinates are positive, so any $x = \tpl{x} \in E_n$ has $0<x_i< \infty$ ($i=1,\, 2,\dots,n$). Suppose that $C$ 
is a closed convex body in $\R^n$ and which contains the origin as an interior point. For each $\alpha \ge 0$, define $\alpha C$ to be the set of all $\tpl{\alpha x},$ where $\tpl{x}$ is a point
in $C$; hence $\alpha C$ is the scale of $C$ with the magnification of $\alpha$. Define $C+\tpl{m}$ to be the translation of $C$ by the point $\tpl{m} \in \R^n$.
\\
$\tab \emf{Statement of problem.}$ Define the set of points $\D(C,\alpha)$ by 

$$\D(C, \alpha) = \set{\alpha C + (m_1 + \f{2},..., m_n + \f{2}): m_i \in \n, \, i=1,...,n}.$$
Find the constant $K(C)$ defined to be the lower bound of those numbers $\alpha$ for which every ray $r(t) = (a_1 t,..., a_n t)$ where $a_i > 0, \, t \in [0, \infty),$
intersects $\D(C, \alpha)$.
\\
\tab That is, the region $S_n$ is divided into $n$-dimensional cubes of side length 1 with vertices at the integer coordinates. The set $\D(C, \alpha)$ is the set of translates of
$\alpha C$ to the centers of these cubes. For a given $\alpha$, there may be a ray $r$ contained in $E_n$ which does not pass through this set $\D(C, \alpha)$. Then $K(C)$ is
the supremum of all $\alpha$ where there is such a ray. Alternatively, $K(C)$ is the infimum of all such $\alpha$ where $\D(C, \alpha)$ intersects every such ray.
The problem of interest will be when $C_n$ is taken to be the $n$-dimensional cube with unit side lengths, centered at the origin. In this case we define $K_n := K(C_n).$

\includegraphics[scale=.4]{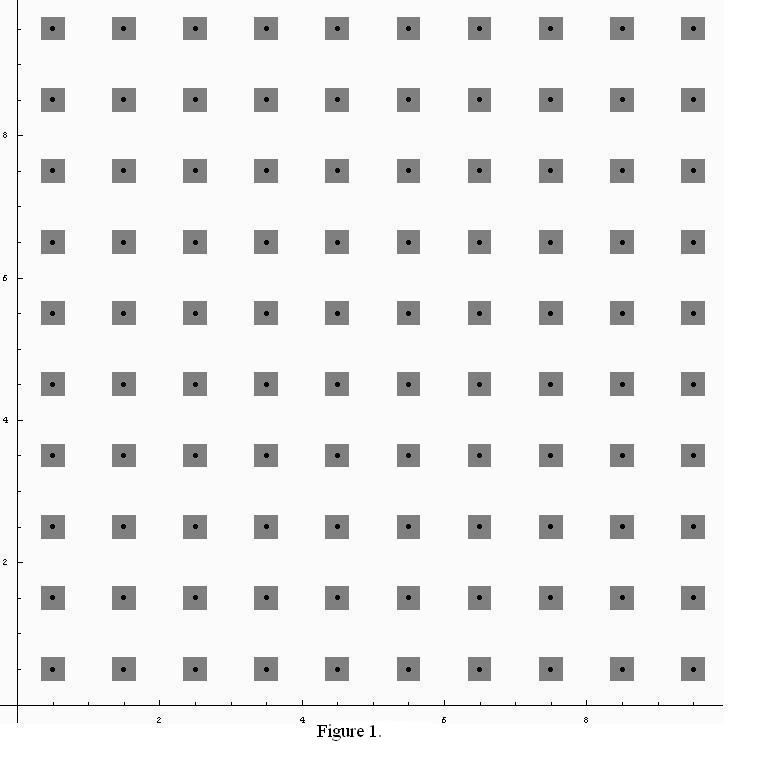} \includegraphics[scale=.45]{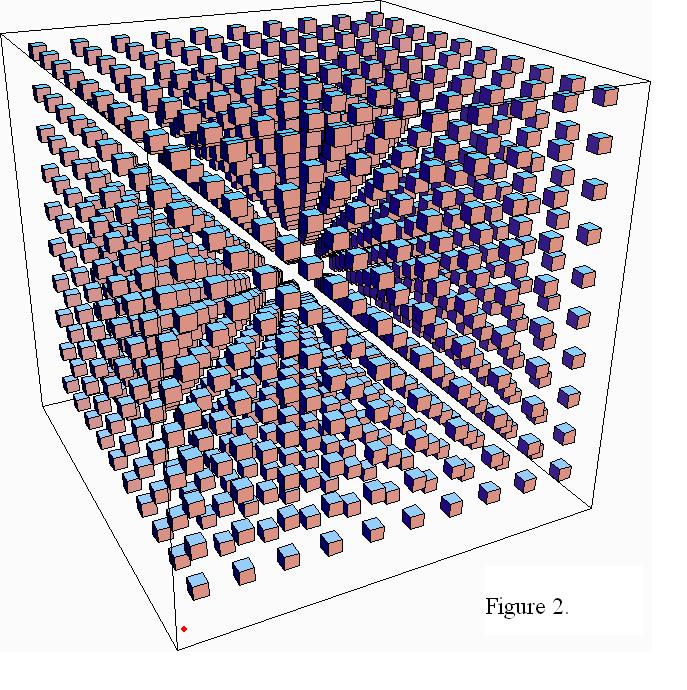}

Figure 1 on the left shows the set $\D(C_2, 1/3)$. The half-integer lattice points are depicted as the centers of the potentially view-obstructing squares. (From here on, the term ``half-integer" will refer to those numbers with nonzero integer part.) On the right, Figure 2 depicts the set $\D(C_3, 1/3)$; the dot in the lower front corner is the origin. We will see later that the squares in Figure 1 do obstruct all views and that the corresponding $\alpha = 1/3$  is the value $K_2$ that we seek. Likewise we will see that the cubes in Figure 2 fail to obstruct all views.

The main goal of the view-obstruction problem is to characterize the numbers $\alpha$ for $\D(C_n, \alpha)$ that obstruct all rays with direction $\tpl{r}$, $r_i$ is positive and real. We use the term ``direction" loosely, not requiring the vector to have unit length.
If one is to prove $\beta=K(C_n)$, it suffices to show (for any $\e > 0$)
\begin{enumerate}
\item $\D(C_n, \beta + \e)$ obstructs all views.
\\
\item $\D(C_n, \beta - \e)$ does not obstruct all views.
\end{enumerate}

Although the above conditions are not hard to grasp conceptually, it is not practical to tackle (1) and (2) as stated because ``all views" does not behave nicely.
In order to make important differences apparent, let $K_n'=K'(C_n)$ be the infimum of all $\alpha$'s such that $\D(C_n, \alpha)$ obstructs all rays with rational direction, i.e., rays that have the same direction as a rational $n$-tuple. It follows that $K_n' \le K_n$, since if $\D(C_n, \alpha)$ obstructs all views it necessarily obstructs all views with rational direction. Also, $K_i' \le K_n'$ for $i \le n$, as if a ray with rational direction $\tpl{r}$ is obstructed by $\D(C_n, \alpha)$, the ray with direction $(r_1,\dots,r_i)$ must necessarily be obstructed by $\D(C_i, \alpha)$. This would be less formidable if it suffices to prove (1) and (2) for all views of rational direction. This can be proved assuming $K_n' < K_m'$ when $n < m$.  The proof is reminiscent of Proposition 5.

\begin{proposition}
Assume $K_n' < K_m'$ when $n<m$. Then the set $\D(C_n, K_m')$ obstructs all views, that is, $K_m' = K_m$.
\end{proposition}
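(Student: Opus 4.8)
The plan is to mimic the structure of the proof of Proposition 5, exploiting the monotonicity hypothesis $K_n' < K_m'$ for $n < m$ to upgrade obstruction of rational-direction rays to obstruction of \emph{all} rays. Fix $\alpha = K_m'$ and let $r(t) = (a_1 t, \dots, a_n t)$ be an arbitrary ray in $E_n$ with $a_i > 0$ real. We must produce a point of $\D(C_n, \alpha)$ on this ray. If the $a_i$ are rationally independent then the ray's projection to $\T^n$ is equidistributed, so the ray visits every neighborhood of the half-integer center $(\f 2, \dots, \f 2)$ and in particular meets $\alpha C_n$ translated there (since $\alpha = K_m' \ge K_n' > 0$ gives $\alpha C_n$ a nonempty interior); so that case is immediate. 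Otherwise, let $v$ be the size of a maximal rationally independent subset of $\set{a_i}$, with $1 \le v < n$.

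The heart of the argument is the reduction, via Lemma 6 (applied as in Proposition 5, after rescaling the $a_i$ by a suitable irrational $\beta$ so that all coordinates are irrational), to a ray of the special form $\alpha'(w_1 t, \dots, w_n t)$ with $w_i \in \z$ not all zero, $w_i = c \ne 0$ for exactly $v$ of the indices, and $\cl{O(\alpha')} \con \cl{O(a)}$ in $\T^n$. The integer direction vector $(w_1, \dots, w_n)$ has at most $n - v + 1$ distinct coordinate values, so after deleting repeated coordinates it corresponds to a rational-direction ray in $E_{n'}$ for some $n' \le n - v + 1 < n$ (strict because $v \ge 1$; if coordinates coincide one keeps a single representative and the obstruction of that lower-dimensional ray pulls back). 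By hypothesis $K_{n'}' < K_m'$... actually more simply: this is a rational-direction ray in dimension $\le n < m$, so it is obstructed by $\D(C_n, K_m')$ — here one uses that $\D(C_n, \alpha)$ obstructing all rational-direction rays is exactly the statement $K_n' \le \alpha$, and $K_n' < K_m' = \alpha$ by hypothesis, with a little room to spare. Because the obstruction point sits in the interior of the scaled cube (we have strict inequality $K_n' < \alpha$, giving an open neighborhood of obstruction), and because $\cl{O(\alpha')} \con \cl{O(a)}$, this open obstruction condition transfers back to the original orbit: the set of times $t$ for which $(a_1 t, \dots, a_n t)$ lies in $\D(C_n, \alpha)$ is nonempty. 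Finally one checks a genuine time $t \in [0,\infty)$ (not merely a limit point) works, exactly as the passage from $t(\cl{O})$ to $tO$ in Proposition 5, using that the target region is open.

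Concluding: since every ray $r(t)$ in $E_n$, rational-direction or not, meets $\D(C_n, K_m')$, the set $\D(C_n, K_m')$ obstructs all views, so $K_n \le K_m'$. Combined with the already-noted inequality $K_n' \le K_n$ and the hypothesis $K_n' < K_m'$, together with the chain $K_n \le K_m' \le K_m$ and the trivial $K_m' \le K_m$, one pins down $K_m' = K_m$ — the point being that $K_m'$ is squeezed: $\D(C_m, K_m')$ obstructs all rational-direction rays by definition of infimum (plus a limiting argument, or working with $K_m' + \e$ and letting $\e \to 0$), and by the dimension-$m$ case of what we just proved it obstructs all rays, forcing $K_m \le K_m'$; the reverse $K_m' \le K_m$ is automatic.

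The main obstacle will be the careful bookkeeping in the dimension reduction: ensuring that collapsing the repeated coordinates of $(w_1,\dots,w_n)$ down to a genuinely lower-dimensional rational ray is legitimate, that the inequality $n' < m$ really holds so the hypothesis $K_{n'}' < K_m'$ (or simply $K_n' < K_m'$) applies with strict slack, and that the ``open obstruction'' — i.e. the obstruction point lying in the \emph{interior} of some $\alpha C_n + (\text{center})$ rather than on its boundary — genuinely follows from the strict inequality and then survives the pullback along $\cl{O(\alpha')} \con \cl{O(a)}$ and the passage from orbit closure to a bona fide time $t \ge 0$. This is precisely the delicate interface where Proposition 5's argument must be adapted from tori-orbit language to the view-obstruction setup.
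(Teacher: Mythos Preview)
Your approach is essentially the paper's: pass to the torus, dispose of the rationally independent case by density, and in the dependent case use Lemma~6 to find a rational-direction ray inside the orbit closure, reduce its effective dimension via the $v$ coincident coordinates, invoke the strict-monotonicity hypothesis to land in the \emph{interior} of the central cube, and pull this back to the original orbit by openness. The one slip worth flagging is that the strict reduction $n-v+1<n$ requires $v\ge 2$, not $v\ge 1$; the case $v=1$ is precisely the rational-direction case in the ambient dimension and is handled (as you gesture at in your conclusion) by the closedness of the cubes together with the definition of $K_m'$ as an infimum.
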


\begin{proof}
$\Rightarrow:$ If $\D(C_n, \alpha)$ obstructs all views then it trivially obstructs all views with rational direction.\\
$\Leftarrow$: Let $r: [0, \infty) \to S_n$ be the ray $r(t) = vt$, where $v=\tpl{r}$, $r_i$ positive and real. If the $r_i$ are rationally independent then there must be a time $t$ when $r(t)$ is sufficiently close to the set of half integer coordinates; as the orbit $O(v)= \set{qv}_{q\in \z}$ is dense in the $n$-torus.
Assume then that $\set{r_j}_1^n$ are not all rationally independent, i.e., some of the $r_j$ are rationally dependent. Since the ray $r(t)$ has the same direction as $r(\beta t)$, we can assume without loss of generality that the $r_j$ are irrational. There exists a largest rationally independent subset $\set{b_l}_1^m \con \set{r_j}_1^n$ for $m < n$. In the $n$-torus, the set $\D(C_n, \alpha)$ reduces to the single cube, denoted $G(\alpha)$, centered at $(\f{2}, \f{2})$ with length $\alpha$. The ray $r$ passes through $\D(C_n, \alpha)$ if and only if the image of $r$ in $\T^n$ intersects $G(\alpha)$. By Lemma 6 there is an irrational $w$ and integers $s_i$ such that $\cl{O(ws_1, \dots, ws_n)} \con \cl{O(r_1, \dots, r_n)} \con \T^n$. Also by Lemma 6, there is an $m$ number of $s_i$ equal to the same nonzero integer $c$. Let $v = n-m+1$. Since $h(t) = tu$, $u=(ws_1,\dots, ws_n)$, has rational direction, the hypothesis says that $h$ intersects $G(K_v') \con \tx{int}(G(K_m')) \con \tx{int}(G(K_m))$ in the $n$-torus. The closure of $h([0,\infty))$ is contained in the closure of $r([0,\infty))$. Since $\tx{int}(G(K_m))$ is open, the image of $r$ in the $n$-torus intersects $G(K_m)$. This proves the result.
\end{proof}

For our future purposes, in light of Conjecture 19 below, we can assume $K_n' = K_n$.

\begin{proposition}
For every n, $K'_n = 1-2 \ds\inf_{r \, \in \, \n^n} \del_n (r)= 1-2\kappa(n).$
\end{proposition}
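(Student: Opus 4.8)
The plan is to translate the view-obstruction condition for a ray of \emph{rational} direction into a coordinatewise statement about $\norm{\,\cdot\,}$, recognize the function $\del_n$, and then take the infimum over directions. The basic dictionary is the one already used in the proof of Proposition 17: modulo $\z^n$ the set $\D(C_n,\alpha)$ collapses to the single closed cube $G(\alpha)=[\f{2}-\fr{\alpha}{2},\,\f{2}+\fr{\alpha}{2}]^n$, and a ray passes through $\D(C_n,\alpha)$ iff its image in $\T^n$ meets $G(\alpha)$. First I would reduce to integer directions: the image $\set{vt: t\ge 0}$ of a ray is unchanged when $v$ is multiplied by a positive scalar, so every rational direction may be rescaled to some $r=\tpl{r}\in\n^n$, and $\D(C_n,\alpha)$ obstructs all rational rays iff it obstructs the ray of direction $r$ for \emph{every} $r\in\n^n$. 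For such $r$ the map $t\mapsto(r_1t,\dots,r_nt)\bmod\z^n$ is $1$-periodic, so the image of the ray in $\T^n$ is compact.

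Next I would run the coordinate computation. For a fixed $i$, the image of $r_it$ in $\T^1$ lies in $[\f{2}-\fr{\alpha}{2},\,\f{2}+\fr{\alpha}{2}]$ precisely when its distance to $\f{2}$ is at most $\fr{\alpha}{2}$; since that distance equals $|\,\norm{r_it}-\f{2}\,|$, this happens iff $\norm{r_it}\ge\fr{1-\alpha}{2}$ (the companion bound $\norm{r_it}\le\fr{1+\alpha}{2}$ being automatic, as $\norm{\,\cdot\,}\le\f{2}$). Therefore the image of $r(t)$ in $\T^n$ lies in $G(\alpha)$ iff $\ds\min_{1\le i\le n}\norm{r_it}\ge\fr{1-\alpha}{2}$. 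The function $t\mapsto\min_i\norm{r_it}$ is continuous and periodic, hence attains its supremum $\del_n(r)$, so $\D(C_n,\alpha)$ obstructs the ray of direction $r$ iff $\del_n(r)\ge\fr{1-\alpha}{2}$, i.e. iff $\alpha\ge 1-2\del_n(r)$.

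Finally I would take the infimum over $r\in\n^n$: $\D(C_n,\alpha)$ obstructs every rational ray iff $\alpha\ge 1-2\del_n(r)$ for all $r$, i.e. iff $\alpha\ge 1-2\ds\inf_{r\in\n^n}\del_n(r)$; and since $\ds\inf_{r\in\n^n}\del_n(r)=\kappa(n)\le\f{n+1}\le\f{2}$ by Theorem 7 and Proposition 8, this threshold is nonnegative, so it is exactly the infimum $K_n'$ of admissible $\alpha\ge 0$, yielding $K_n'=1-2\kappa(n)$. The one place that needs care is the open/closed bookkeeping together with the attainment of the supremum defining $\del_n$: because $G(\alpha)$ is closed and the periodic function $\min_i\norm{r_it}$ genuinely achieves the value $\del_n(r)$, the borderline case $\alpha=1-2\del_n(r)$ still counts as an obstruction, and this is what makes the infimum land on $1-2\kappa(n)$ exactly rather than merely bounding $K_n'$ from one side.
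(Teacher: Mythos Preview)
Your argument is correct and follows essentially the same route as the paper's proof: both reduce to integer directions, translate ``the ray meets $\D(C_n,\alpha)$'' into a coordinatewise distance-to-half-integer condition on $\T^n$, and then invoke the identity linking the distance to $\tfrac12$ with $\norm{\,\cdot\,}$ (the paper writes it as $\norm{r_it}=\tfrac12-\norm{r_it-\tfrac12}$, you use the equivalent form ``distance to $\tfrac12$ is $\tfrac12-\norm{r_it}$''). Your presentation is in fact slightly tighter, since you address explicitly the attainment of $\del_n(r)$ and the closedness of $G(\alpha)$ needed to pin down the borderline $\alpha=1-2\del_n(r)$, which the paper leaves implicit.
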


\begin{proof}
We first show that 
$K'_n$ = 2 $\ds\sup_{r \in \n^n} \, \min_{t \in [0,1]} \, \max_{1 \le i \le n}$ $\norm{r_i t - \f{2}}.$
Let $s = \tpl{s} \in \n_0^n$ so that $s$ is the direction of the ray $r(t) = (s_1 t, ..., s_n t)$. The closest distance, in the product metric, from $\tx{Im}(r)$ to the nearest point of  $A=\set{(m_1 + \f{2},..., m_n + \f{2}): m_i \in \n}$ is $l=\ds\min_{t \in [0,1]} \, \max_{1 \le i \le n}$ $\norm{r_i t - \f{2}},$ as $\norm{r_it - \f{2}}$ is the distance from $r_i \, t$ to the nearest half-integer. It follows
that the smallest $\alpha$ for which $\D(C_n, \alpha)$ obstructs the ray $r(t)$ is 2$l$: since if $c=\norm{a-\f{2}}$, $a$ is contained in the interval $[b-\f{2} - c, b-\f{2} +c]$ for some integer $b$; the interval has length $2c$, and this is the smallest interval centered at the half-integers containing $a$. It follows that $K'_n$ is 2$\sup \ds\min_{t \in [0,1]} \, \max_{1 \le i \le n}$ $\norm{r_i t - \f{2}}$, where the supremum is taken over all $\tpl{r} \in \n^n$. One can check that
$\norm{r_i t} = \f{2} - \norm{r_it-\f{2}},$ and hence
$$K_n'=2 \ds\sup_{r \in \n^n} \, \min_{t \in [0,1]} \, \max_{1 \le i \le n} \tx{$\norm{r_i t - \f{2}}$}=2 \ds\sup_{r \in \n^n} \, \min_{t \in [0,1]} \, \max_{1 \le i \le n} (\f{2} - \norm{r_it}),$$
which becomes 
$$1 +2\ds\sup_{r \in \n^n} \, \min_{t \in [0,1]} \, \max_{1 \le i \le n} (-\norm{r_it}),$$
which is $$1-2\ds\inf_{r \in \n^n} \, \max_{t \in [0,1]} \, \min_{1 \le i \le n} \norm{r_it}.$$
But this is 
$$1-2\ds\inf_{r \, \in \n^n} \del_n(r)=1 - 2\kappa(n),$$
and the proof is complete.
\end{proof}

\begin{corollary}
Assuming $K_i' < K_j'$ when $i < j$, $K_n = 1-2\kappa(n)$ for all $n \in \n$.
\end{corollary}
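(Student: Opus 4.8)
The plan is simply to chain together the two propositions immediately preceding this corollary, since all of the substantive work has already been done there.

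First I would note that the standing hypothesis of the corollary, namely $K_i' < K_j'$ whenever $i < j$, is exactly the monotonicity assumption demanded by Proposition 17. Invoking Proposition 17 (applied with the role of its ``$m$'' played by our $n$, using that $K_i' < K_n'$ for every $i < n$) therefore yields the equality $K_n = K_n'$ for every $n \in \n$; that is, under the monotonicity hypothesis the infimum over \emph{all} views and the infimum over views with rational direction agree.

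Next I would apply Proposition 18, which is unconditional, to rewrite the right-hand side: $K_n' = 1 - 2\inf_{r \in \n^n}\del_n(r) = 1 - 2\kappa(n)$. Substituting this into the identity $K_n = K_n'$ obtained in the previous step gives $K_n = 1 - 2\kappa(n)$, which is the assertion of the corollary.

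I do not expect any genuine obstacle here — the corollary is the composition of the two equalities $K_n = K_n'$ (Proposition 17, conditional on the stated hypothesis) and $K_n' = 1 - 2\kappa(n)$ (Proposition 18, unconditional). The only point worth a moment's care is bookkeeping on the indices: Proposition 17 is phrased for a single fixed pair $n < m$, so to conclude $K_n = K_n'$ for \emph{every} $n$ one should remark that the hypothesis $K_i' < K_j'$ for all $i < j$ lets us apply Proposition 17 once for each value of $n$.
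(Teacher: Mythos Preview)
Your approach is correct and is exactly what the paper does: the paper's proof is the single line ``Apply Proposition 15,'' which (together with the immediately preceding Proposition 16) is precisely the chaining $K_n = K_n'$ and $K_n' = 1 - 2\kappa(n)$ that you spell out. Your only slip is in the numbering---the two propositions you invoke are numbered 15 and 16 in the paper, not 17 and 18 (an unlabeled displayed equation earlier in the paper bumps the shared counter).
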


\begin{proof}
Apply Proposition 15.
\end{proof}

\begin{corollary}
For any $n \in \n$, $\fr{n-1}{n+1} \le K_n' \le \fr{n-1}{n}$.
\end{corollary}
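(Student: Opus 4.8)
The plan is to obtain this corollary as a purely formal consequence of two results already established in the excerpt: the identity of Proposition 16 and the two-sided estimate of Proposition 12. First I would invoke Proposition 16, which gives $K_n' = 1 - 2\kappa(n)$, so that proving the asserted bounds on $K_n'$ is equivalent to proving $\f{2n} \le \kappa(n) \le \f{n+1}$. But that is exactly the statement of Proposition 12, so no genuinely new content is needed; the work is just translating one inequality into the other through the map $x \mapsto 1-2x$.

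Concretely, I would carry out the arithmetic in two steps. From $\kappa(n) \le \f{n+1}$ (the pigeonhole bound, Propositions 8/12) one gets $K_n' = 1 - 2\kappa(n) \ge 1 - \fr{2}{n+1} = \fr{n-1}{n+1}$, which is the lower bound. From $\kappa(n) \ge \f{2n}$ (the measure-theoretic bound of Proposition 12) one gets $K_n' = 1 - 2\kappa(n) \le 1 - \f{n} = \fr{n-1}{n}$, which is the upper bound. The key point to be careful about is that $x \mapsto 1 - 2x$ is \emph{decreasing}, so the lower bound on $\kappa(n)$ produces the \emph{upper} bound on $K_n'$ and vice versa; once that is kept straight, the computation is immediate.

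Consequently I do not expect any real obstacle here — the corollary is essentially a corollary of Proposition 16 applied to Proposition 12, and the proof is a single chain of (in)equalities. The only thing worth a remark in passing is that, under the monotonicity hypothesis $K_i' < K_j'$ for $i<j$ used in Corollary 20, the same bounds hold for $K_n$ itself, namely $\fr{n-1}{n+1} \le K_n \le \fr{n-1}{n}$; but establishing that is not required for the present statement.
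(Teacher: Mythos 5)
Your proposal is correct and is exactly the paper's argument: combine the identity $K_n' = 1-2\kappa(n)$ with the bounds $\f{2n} \le \kappa(n) \le \f{n+1}$, keeping track of the fact that $x \mapsto 1-2x$ reverses inequalities. The only differences are cosmetic (reference numbering and your explicit remark about monotonicity), so nothing further is needed.
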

\begin{proof}

By Proposition 11, 
$$\f{2n} \le \kappa(n) \le \f{n+1}.$$
The rest follows by setting $K_n' = 1-2\kappa(n)$.
\end{proof}

Assuming the conjecture below, the above results ensure that finding $\kappa(n)$ and finding $K_n$ are equivalent problems. Since the LRC is equivalent to $\kappa(n) = 1/(n+1)$ for each $n$, Conjecture 14, and hence the LRC, is also equivalent to

\begin{conj}
For every $n \in \n, K_n' = \fr{n-1}{n+1}$.
\end{conj}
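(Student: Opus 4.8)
The statement splits into the two inequalities $K_n' \ge \fr{n-1}{n+1}$ and $K_n' \le \fr{n-1}{n+1}$, and the first is already in hand. Indeed, Proposition 8 gives $\k(n) \le \f{n+1}$, and its proof shows that the ray with direction $(1,2,\dots,n)$ slips past $\D(C_n,\al)$ for every $\al < \fr{n-1}{n+1}$; quantitatively this is exactly the bound $\fr{n-1}{n+1} \le K_n'$ recorded in Corollary 18. So, using the identity $K_n' = 1-2\k(n)$ of Proposition 16, the whole problem reduces to the reverse bound $\k(n) \ge \f{n+1}$, which by Theorem 7 is the assertion that $\del_n(s) \ge \f{n+1}$ for every $s \in \n^n$. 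By the remark after Proposition 4 it is enough to prove this for $s = \tpl{s}$ with pairwise distinct coordinates, and we may rescale so that $\gcd(s_1,\dots,s_n) = 1$.

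The plan for this reverse bound is a covering argument sharpening the one in Proposition 12. Fix such an $s$ and a parameter $0 < \e < \f{n+1}$; on the circle $\T = [0,1)$ set $B_i = \set{t : \norm{s_i t} < \e}$, so each $B_i$ is a union of $s_i$ arcs of total length $2\e$. A time $t$ with $\min_i \norm{s_i t} \ge \e$ exists if and only if $\bigcup_{i=1}^n B_i \ne \T$, and exhibiting such a $t$ for every $\e < \f{n+1}$ yields $\del_n(s) \ge \f{n+1}$. Mere subadditivity gives only $\bigl|\bigcup_i B_i\bigr| \le 2n\e$, hence the weak estimate $\k(n) \ge \f{2n}$ of Proposition 12; what is needed is that the arcs overlap enough to force $\bigl|\bigcup_i B_i\bigr| < 1$ even once $2n\e \ge 1$. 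I would attack this through inclusion--exclusion combined with Fourier analysis on $\T$: writing $\chi_i$ for the indicator of $B_i$, the coefficient $\widehat{\chi_i}(m)$ vanishes unless $s_i \mid m$, so each pairwise overlap satisfies $|B_i \cap B_j| = 4\e^2 + E_{ij}$, where the correction $E_{ij}$ is an oscillatory sum over frequencies divisible by $\mathrm{lcm}(s_i,s_j)$, and similarly for the higher-order intersections. Were all these oscillatory corrections negligible one would obtain the ``independent arcs'' bound $\bigl|\bigcup_i B_i\bigr| \le 1-(1-2\e)^n < 1$, which would finish the proof.

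The main obstacle is precisely the control of those resonance terms: for arithmetically coherent speed vectors such as $(1,2,\dots,n)$ the arcs line up so that $\bigl|\bigcup_i B_i\bigr|$ tends to $\fr{2n}{n+1}$ and the independent-arcs heuristic fails, and establishing that the dilates of $(1,2,\dots,n)$ are the \emph{only} way this can occur is exactly the open Lonely Runner Conjecture. For $n \le 6$ it is known through the case analyses cited in Section~1, so Conjecture 19 holds in that range, which --- by Proposition 4, Theorem 7 and Proposition 16 --- is the statement that the LRC holds for up to seven runners. For a fully general argument I would aim for a self-improving induction: assume $K_{n-1}' = \fr{n-2}{n}$, equivalently $\del_{n-1}(s') \ge \f{n}$ for all $s' \in \n^{n-1}$; then, given $s \in \n^n$, select by an averaging or compactness argument a single coordinate to discard, feed the remaining $n-1$ speeds into the inductive hypothesis to locate a long sub-interval of good times, and argue that the discarded speed cannot also resonate throughout that sub-interval. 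Making this quantitative enough that the loss in passing from $n-1$ to $n$ is only $\f{n} - \f{n+1} = \fr{1}{n(n+1)}$ is the crux, and is where a new idea would be required.
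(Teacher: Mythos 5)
You have not proved the statement, and you should be aware that the paper does not either: this is Conjecture~19, which the paper shows to be \emph{equivalent} to the Lonely Runner Conjecture (via Proposition~4, Theorem~7 and Propositions~15--16) and which is open for $n \ge 7$. What the paper actually establishes is exactly the part of your proposal that is complete: the identity $K_n' = 1 - 2\k(n)$, the two-sided bound $\fr{n-1}{n+1} \le K_n' \le \fr{n-1}{n}$ (Corollary~17, coming from $\f{2n} \le \k(n) \le \f{n+1}$), and the reduction of the remaining inequality to $\del_n(s) \ge \f{n+1}$ for all $s \in \n^n$. Your first paragraph reproduces this chain correctly (modulo citing the upper bound on $\k$ as Proposition~8 rather than the proposition actually containing it), and the restriction to distinct coordinates and $\gcd = 1$ is legitimate.

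The genuine gap is everything after that. The inclusion--exclusion/Fourier scheme does not yield the bound $\bigl|\bigcup_i B_i\bigr| < 1$: as you yourself observe, the resonance terms are not negligible, and for $s = (1,2,\dots,n)$ the union of the arcs tends to measure $1$ as $\e \to \f{n+1}$ (not to $\fr{2n}{n+1}$, which is impossible for a subset of $\T$), so the ``independent arcs'' estimate $1-(1-2\e)^n$ is simply false for the extremal speed vector and cannot be the engine of a proof. Likewise the proposed induction from $n-1$ to $n$ leaves unargued precisely the step that matters --- that the discarded speed cannot resonate throughout the interval of good times supplied by the inductive hypothesis --- and the paper's Section~5 (Theorem~32) shows how far current techniques of this ``invisible runner'' type actually reach: discarding $d$ runners only yields $\del(D) \ge \fr{d+1}{2k}$, which for $d=0$ recovers the $\f{2n}$ bound and not $\f{n+1}$. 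So your write-up is an honest reduction of the conjecture to the open LRC together with a research program, not a proof; as a verification of the statement it fails at the inequality $\k(n) \ge \f{n+1}$, which is known only for $n \le 6$ by the case analyses cited in Section~1.
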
 
As Proposition 5 shows the LRC with rational speeds implies the LRC where the runners speeds are arbitrary. Likewise by Proposition 15, if $\D(C_n, \fr{n-1}{n+1})$ obstructs all rational views for each $n$, then it obstructs all views. That is, if  $K_n' = \fr{n-1}{n+1}$ for each $n$, then $K_n=K_n'$. It is interesting to notice that by Corollary 17, $\D(C_n, \fr{n-1}{n})$ necessarily obstructs every view in $S_n$. From the lower bound it follows that $K_3 \ge 1/2 > 1/3$ so that the cubes represented in Figure 2 do not obstruct all views.

We proved the LRC with three runners in the previous section. Here we give an alternate proof using view-obstruction. 
\begin{theorem}
$K_2 = 1/3$.
\end{theorem}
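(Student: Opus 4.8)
The plan is to verify, directly in the plane, the two properties that pin down $K_2$: for every $\e>0$, (1) $\D(C_2,\f{3}+\e)$ obstructs every ray in $E_2$, and (2) $\D(C_2,\f{3}-\e)$ fails to obstruct some ray. The only ingredient I borrow is the reduction from the proof of Proposition~15: a ray $r(t)=(a_1t,a_2t)$ with $a_i>0$ meets $\D(C_2,\alpha)$ if and only if its image in $\T^2$ meets the single closed square $G(\alpha)$ of side $\alpha$ centered at $(\f{2},\f{2})$; since the ray is unbounded, this is the same as the period-one condition ``$\norm{a_1t-\f{2}}\le h$ and $\norm{a_2t-\f{2}}\le h$'' having a solution $t$, where $h=\alpha/2$ is the half-side and $\norm{a_it-\f{2}}$ is the distance from $a_it$ to the nearest element of $\f{2}+\z$.

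For (2) I would use the ray of direction $(1,2)$, that is $r(t)=(t,2t)$ --- the view-obstruction avatar of the extremal speeds $1,2$ behind Proposition~8. With $h=(\f{3}-\e)/2<\f{6}$, the constraint $\norm{t-\f{2}}\le h$ confines $t\bmod 1$ to $[\f{2}-h,\f{2}+h]$, while $\norm{2t-\f{2}}\le h$ confines $t\bmod 1$ to $[\f{4}-h/2,\f{4}+h/2]\cup[\fr{3}{4}-h/2,\fr{3}{4}+h/2]$; the inequalities $\f{4}+h/2<\f{2}-h$ and $\f{2}+h<\fr{3}{4}-h/2$, both equivalent to $h<\f{6}$, make these sets disjoint, so this ray misses $G(\f{3}-\e)$ and hence misses $\D(C_2,\f{3}-\e)$. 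Therefore $K_2\ge\f{3}$.

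For (1) let $(a_1,a_2)$ with $a_i>0$ be arbitrary and put $\beta=a_2/a_1$. If $\beta\notin\Q$ the image of $r$ is dense in $\T^2$ and so meets the closed square $G(\f{3}+\e)$. If $\beta\in\Q$, rescale so the direction is $(q,p)$ with $\gcd(p,q)=1$ (and, swapping coordinates if needed, $p\le q$), and set $h=\f{6}+\e/2$. The condition is periodic in $t$ with period $1$, so I work on $[0,1)$: there $\norm{qt-\f{2}}\le h$ cuts out $q$ arcs of radius $h/q$ centered at $\fr{2k+1}{2q}$, and $\norm{pt-\f{2}}\le h$ cuts out $p$ arcs of radius $h/p$ centered at $\fr{2j+1}{2p}$. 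Two such arcs overlap once the circular distance of their centers is at most $\fr{h}{q}+\fr{h}{p}$, and that distance is
$$\norm{\fr{2k+1}{2q}-\fr{2j+1}{2p}}=\norm{\fr{2(pk-qj)+(p-q)}{2pq}}.$$
Since $\gcd(p,q)=1$ the integer $pk-qj$ runs over all of $\z$, so if $p$ and $q$ are both odd this distance can be made $0$ (concentric arcs, which overlap), and otherwise $p-q$ is odd and it can be made as small as $\f{2pq}$, which is $\le\fr{h}{q}+\fr{h}{p}$ because that inequality is $h\ge\f{2(p+q)}$ while coprimality forces $p+q\ge3$, whence $\f{2(p+q)}\le\f{6}<h$. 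In every case $r$ meets $G(\f{3}+\e)$, and by periodicity it does so at arbitrarily large $t$, hence at an honest translated square of $\D(C_2,\f{3}+\e)$; thus $\D(C_2,\f{3}+\e)$ obstructs all views and $K_2\le\f{3}$. Combining (1) and (2) gives $K_2=\f{3}$; this also squares with Proposition~16 and Corollary~17, since $K_1'=0<K_2'$ lets Proposition~15 identify $K_2$ with $K_2'=1-2\kappa(2)$.

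The step I expect to be the main obstacle is the rational case of (1): one has to show that any enlargement past $\alpha=\f{3}$ forces an arc overlap for \emph{every} coprime direction $(q,p)$, the unique tight case being the direction $(1,2)$ already used for (2). This is inevitably the geometric face of the statement that $\del_2(1,2)=\f{3}$ is the infimum of $\del_2$ over $\n^2$ (Theorem~10), so genuine number theory --- B\'ezout's identity together with the parity of $p-q$ --- is doing the real work; the cruder fact that a real interval of length $\ge1$ must contain an element of $\f{2}+\z$ yields only the weaker bound $K_2\le\f{2}$.
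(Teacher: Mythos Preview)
Your argument is correct. The two inequalities are established cleanly: the ray in direction $(1,2)$ shows $K_2\ge\f{3}$ by an explicit disjointness of arcs, and for $K_2\le\f{3}$ the irrational case follows from density while the rational case is handled by the B\'ezout/parity computation, the tight inequality $\f{2(p+q)}\le\f{6}$ coming from $p+q\ge 3$ for coprime $(p,q)$ of opposite parity. The periodicity remark at the end correctly pushes the intersection out to a square with $m_i\ge 1$.

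Your route is genuinely different from the paper's. The paper argues directly in the plane from a picture: rays of slope in $[\f{2},2]$ meet the square centered at $(\f{2},\f{2})$, and the remaining slopes are handled by a visual gap argument between consecutive squares along the line $y=\f{2}$, with symmetry reducing to $\theta\in(0,\f{2})$. That proof is short and geometric but leans on the figure and does not separate rational from irrational directions. Your proof instead passes to $\T^2$, reduces obstruction to an arc--overlap problem on the circle, and settles that with B\'ezout's identity and a parity split on $p-q$. What you gain is rigor independent of any picture and a transparent identification of $(1,2)$ as the unique (up to symmetry) extremal direction; what the paper's approach gains is brevity and a picture that makes the result immediately plausible. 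Your method is also the one that scales conceptually: the same torus/arc viewpoint is exactly what underlies Proposition~16 and the link $K_n'=1-2\kappa(n)$, whereas the paper's planar picture is specific to $n=2$.
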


\begin{proof}

We refer to the following picture in the proof:

\includegraphics[scale=.55]{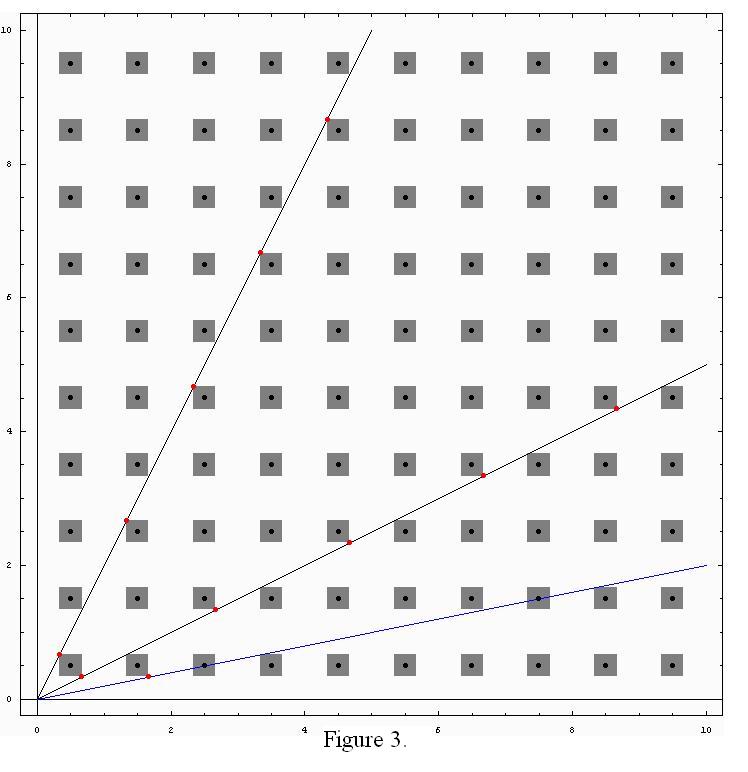}

The squares are centered at half-integers and have side length of $1/3$, as in figure (2), so the squares represent the set $\D_2=\D(C_2, 1/3)$. The two top rays have slopes of 2 and $1/2$ respectively while the bottom ray has a slope of $1/5$. By symmetry, $\D_2$ obstructs all views with slope $\theta > 2$ if it obstructs all slopes with $\theta \in (0, 1/2)$. The rays $y=\theta x$ with $1/2 \le \theta \le 2$ intersect the square with center $(\f{2}, \f{2})$. By observation of the bottom ray, every ray with slope $\theta \in (o, \f{2})$ are obstructed by a square. This is evident if the slope is between that of the bottom line and the middle line of slope $1/2$, i.e., $\theta \in [\f{5}, \f{2}]$. If $0 < \theta < \f{5}$ then the ray $y = \theta x$ has no hope of passing through the gap of two consecutive squares with centers $(\f{2} + n, \f{2}), \, (\f{2} +n+1, \f{2})$ as the minimal slope of a line needed to pass unobstructed between such consecutive squares is $1/2$. Thus the set $\D_2$ obstructs all views. From the above figure it is also evident that $1/3$ is the smallest number $\alpha$ such that $\D(C_2, \alpha)$ obstructs all views: for the top lines with slopes 2 and $\f{2}$ only pass through the corners of squares in $\D_2$, as indicated by the points along those lines. Hence $K_2 = \f{3}$ and this proves the LRC for three runners.
\end{proof}

The following figures give heuristic examples demonstrating that $K_3 = 1/2$. In all figures, the dot in the center is the origin, so that one is placed at the origin and able to literally ``see" which views are obstructed by the cubes.

\includegraphics[scale=.3]{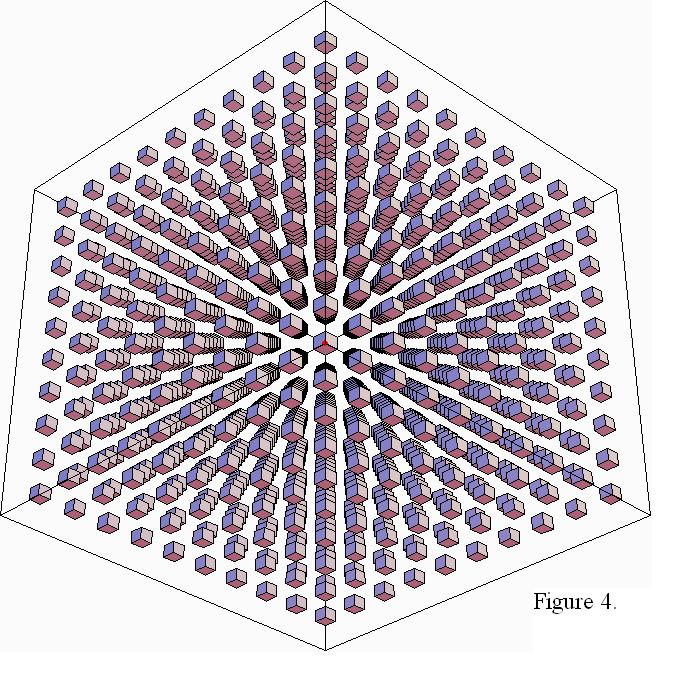} \includegraphics[scale=.3]{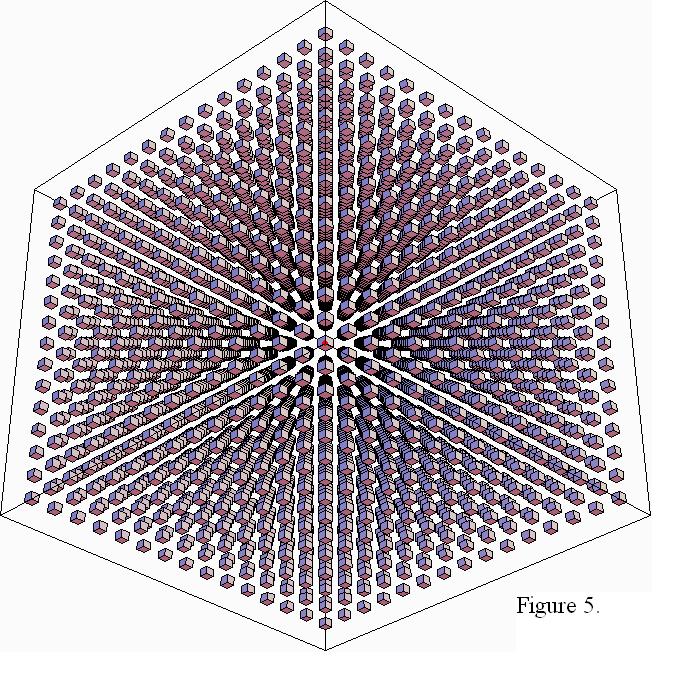} 
\includegraphics[scale=.3]{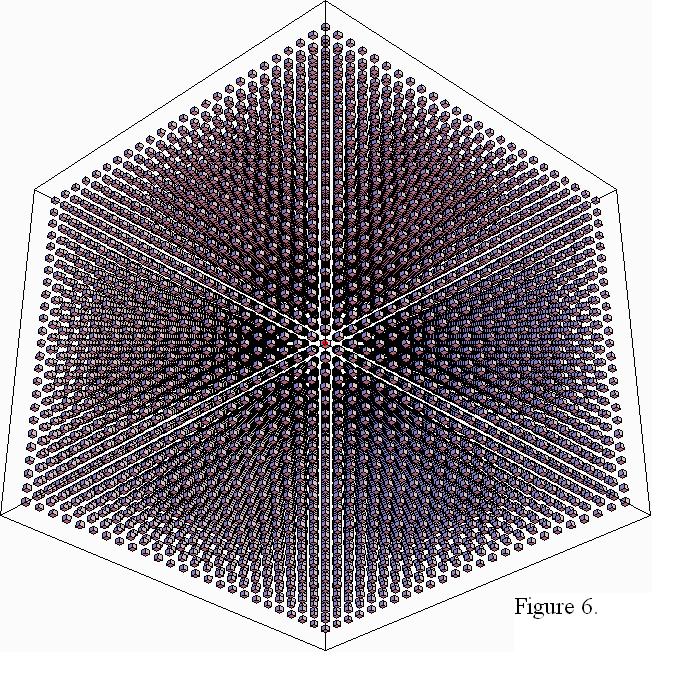}

Figures 4, 5, and 6 above demonstrate the set $\D(C_3, 1/3)$. Figure 4 shows the cubes of that set contained in the region $[0, 10]^3$ and is the same set as Figure 2 but looking out at the origin. Figure 5 shows the cubes inside the region $[0,15]^3$, and Figure 6 shows the cubes contained in $[0, 25]^3$. We know that $\D(C_3, 1/3)$ does not obstruct all views. Below is the corresponding figures with $\D_3=\D(C_3, 1/2)$. Most views are obstructed by the cubes of $\D_3$  contained in the region $[0,10]^3$ as seen in Figure 7. It seems very reasonable that $\D_3$ does obstruct all views. Since the LRC is known to hold for four runners, this is indeed the case.

\includegraphics[scale=.3]{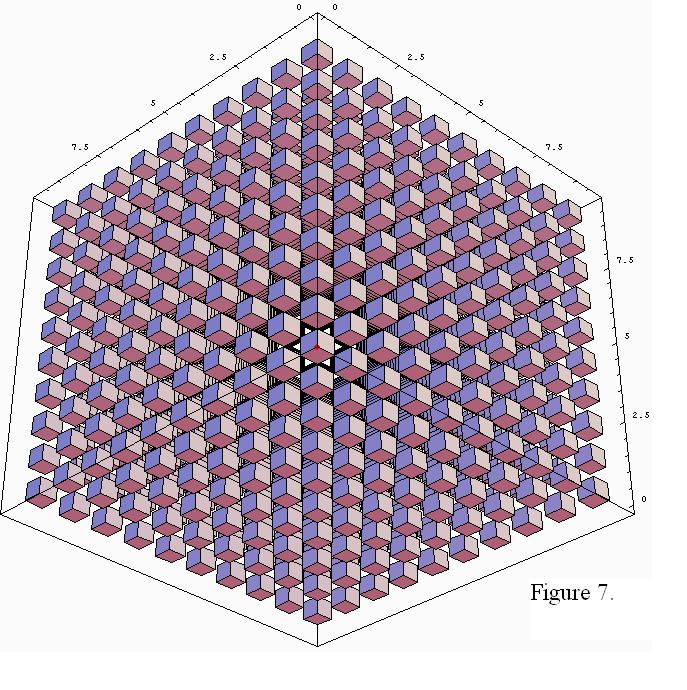}\includegraphics[scale=.3]{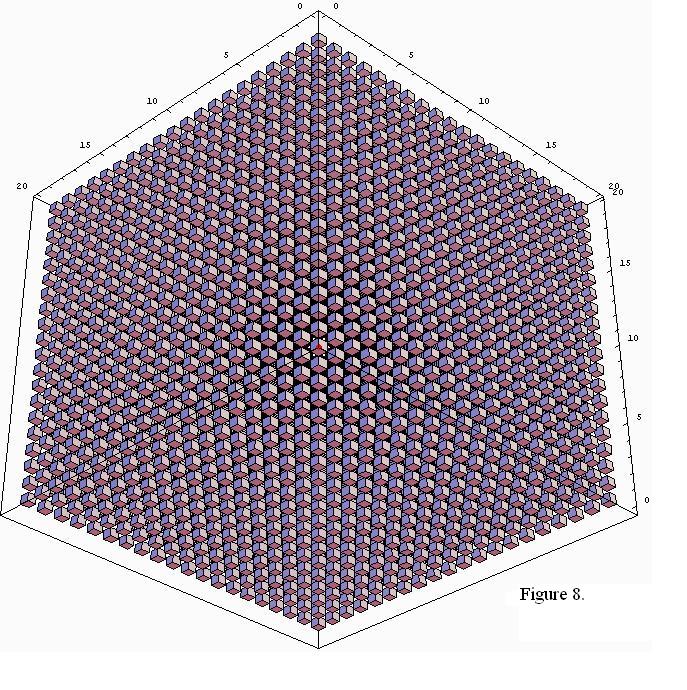}
\includegraphics[scale=.3]{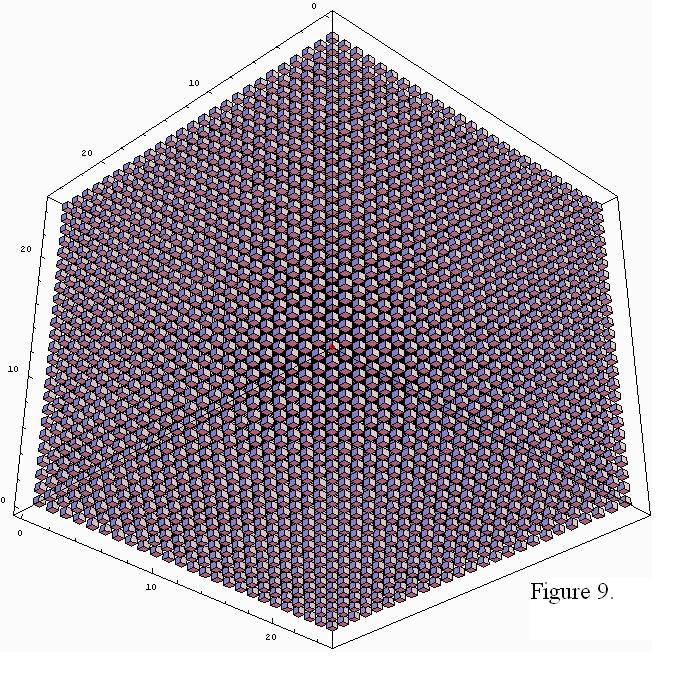}


\begin{example}
Recall Example 1 in the second section. Every orbit of $x=(x_1, x_2) \in \I^2$, has $\ld_2(x) \ge \k(2) = \f{3}$ by definition of $\k$ and Theorem 10. Thus $O(x)$ intersects every square centered at $(\f{2}, \f{2})$ with length greater than $1 - 2\ld_2(x) \le 1 - 2\k(2) =1-\fr{2}{3}= \f{3}$. Since $x$ was arbitrary with nonzero coordinates, it follows that for all $\e > 0$, every orbit of every irrational pair intersects the square with length $\f{3} + \e$ centered at $(\f{2}, \f{2})$. We denote such a square as $G_\e:=G(\f{3} + \e)$.
\end{example}

\begin{example}
Let $s=(s_1, s_2) \in \Q^2$. Then for every $\e > 0,$ the line $y= ts$, $t \in \R$ intersects $G_\e$:
notice that $c=\sup_{t \in \R} \min \set{\norm{t s_1}, \norm{t s_2}} \ge \inf_{s \in \n^2} \del_2(s) = \f{3}$ by definition of $\del_2$, Theorem 10, and since the line $y = ts$ has rational slope. We show $L=\set{ts}_{t \in \R} \con \T^2$ intersects $G_\e$:
By definition of $c$, there are points $(x, y) \in L$ such that $\norm{x} > c -\fr{\e}{2}$ and $\norm{y} > c - \fr{\e}{2}$. Consider the square $G'=G(1-2c + \e)$. If $(x, y)$ did not intersect $G'$ then $\norm{x}$ or $\norm{y}$ would be less than than the distance from a corner of $G'$ to the origin. That is, $\norm{x} < \f{2} - \fr{1-2c+\e}{2} = c - \fr{\e}{2}$ or $\norm{y} < c - \fr{\e}{2}$, which contradicts the choice of that pair. Since $c \ge \f{3}$, we have that $1-2c + \e \le 1-\fr{2}{3} + \e = \f{3} + \e$, so that the square $G_\e$ contains the square $G'$ and hence $L$ intersects $G_\e$. Thus, in the two-torus, every line with rational direction intersects $G_\e$ for any $\e > 0$.
\end{example}

 Examples 2 and 3 perfectly illustrates the relationship between the diophantine problem and the view-obstruction problem for two dimensions, and the results they give can be extended and when applying the results from the previous two sections. Recall Figure 1 in the second section, showing the set of squares with length $1/3$ centered at the half-integers. We denoted this set as $\D_2=\D(C_2, 1/3)$. Reducing the entire plane modulo the integer lattice equates each square in $\D_2$ to the square $G(1/3)$. We know that each ray $r = tx$ with nonzero slope, $t > 0$ is obstructed by $\D_2$ by Theorem 20 in section 3. Thus the reduction of every ray in the two-torus intersects $G(1/3)$. Hence, in the two-torus, every line with rational direction intersects $G(1/3)$, this is stronger than what is shown in Example 3. 


\section{Billiard paths in square Tables}
As one may have noticed, results in the previous sections used various facts about orbits of certain paths in the $n$-torus. Specifically, Theorem 7 in the second section, Proposition 15 and Proposition 5 in the third section all relied on Lemma 6 which used the fact that the orbit $O(x)$ is dense in $\T^n$ when the $x_i$ are rationally independent. We used these results to relate the diophantine and View-obstruction problems. In this section we mainly consider the results in the previous section on view-obstruction and explore its analog to billiard paths in $n$-dimensional cubes. We start by tiling the first quadrant with unit squares. Below, all rays pass through the origin, and all billiard paths start their initial trajectory passing through the origin and has, unless otherwise specified, the unit square as a billiard table.

\begin{proposition}
Each ray in the first quadrant corresponds with a path in a square billiard. Likewise each billiard path in the square corresponds to a ray in the first quadrant.
\end{proposition}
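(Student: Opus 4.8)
The plan is to make precise the classical \emph{unfolding} correspondence between billiard trajectories and straight lines. Tile the open first quadrant $\set{(x,y): x,y>0}$ by the unit squares $R_{m,n} = [m,m+1]\times[n,n+1]$ with $m,n \in \n_0$. Define the zig-zag (tent) map $\rho:[0,\infty)\to[0,1]$ by $\rho(u)=u-2k$ for $u\in[2k,2k+1]$ and $\rho(u)=2k+2-u$ for $u\in[2k+1,2k+2]$; it is continuous, piecewise linear with slopes $\pm 1$, and its only non-smooth points are the integers. Put $\Phi(x,y)=(\rho(x),\rho(y))$, a continuous surjection onto the unit square $[0,1]^2$. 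The first thing I would verify is the structural fact that $\Phi$ restricted to any tile $R_{m,n}$ is an isometry onto $[0,1]^2$ — a product of at most two coordinate reflections — so that $\Phi$ folds the first quadrant onto the square exactly by repeated mirror reflections across the grid lines; consistency of $\Phi$ on shared edges of adjacent tiles is immediate from continuity of $\rho$.

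Next I would treat the forward direction. Given a ray $\gamma(t)=(a_1t,a_2t)$ with $a_1,a_2>0$, set $\beta=\Phi\circ\gamma$. Inside each tile $\gamma$ is a straight segment, so $\beta$ is a concatenation of straight segments in $[0,1]^2$, each traversed with speed $\sqrt{a_1^2+a_2^2}$. When $\gamma$ crosses a grid line $x=m$ (resp.\ $y=n$), exactly one coordinate of $\Phi$ reverses the sign of its slope, so the velocity of $\beta$ has its horizontal (resp.\ vertical) component negated and the other component unchanged; since at that instant $\beta$ is meeting a vertical (resp.\ horizontal) side of the square, this is precisely the law of reflection — normal component reversed, tangential component preserved. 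Hence $\beta$ is a billiard path in the unit square, starting at $\Phi(0,0)=(0,0)$ with initial direction $(a_1,a_2)$. This establishes the first assertion.

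For the converse I would invert $\Phi$ along the trajectory. Let $\beta$ be a billiard path in $[0,1]^2$ issuing from the origin with initial velocity $(a_1,a_2)$, $a_i>0$, and let $\gamma$ be the ray with that direction. On the first tile $R_{0,0}=[0,1]^2$ the map $\Phi$ is the identity, so $\beta$ and $\Phi\circ\gamma$ agree on their first legs. Proceeding by induction on the successive bounces: when $\gamma$ exits the current tile through a grid line it enters a uniquely determined adjacent tile, on which $\Phi$ is the pertinent reflection isometry, and the slope-sign change of $\Phi$ across that line matches exactly the reflection $\beta$ undergoes at the corresponding wall; hence $\Phi\circ\gamma=\beta$ leg by leg. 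Thus every billiard path from the origin arises from a ray, and since distinct rays have distinct initial directions they yield distinct billiard paths, so the correspondence is a genuine bijection.

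The main obstacle is the treatment of \emph{corners}: $\gamma$ meets a lattice point $(m,n)$ with $m,n\ge 1$ exactly when $\beta$ reaches a corner of the square, where the usual reflection rule is undefined. I would handle this either by restricting both families to the generic case (rays avoiding $\n^2$, equivalently billiard paths avoiding corners), or by adopting the convention forced by $\Phi$ — at such a point both coordinates of $\rho$ reverse slope simultaneously, so $\beta$ turns around and retraces its path — and checking that this keeps $\Phi\circ\gamma$ a well-defined continuous path and preserves the bijection. The remaining points (that $\Phi|_{R_{m,n}}$ really is an isometry onto the whole square, and the parity bookkeeping for $\rho$) are routine once the setup is fixed.
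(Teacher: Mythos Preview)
Your argument is correct and rests on the same classical unfolding idea as the paper, so there is no gap to repair. The presentations differ mainly in direction and level of formality: the paper argues by \emph{unfolding} --- starting from a billiard path in $T$, it successively reflects the table across the side of each impact so that the broken trajectory straightens into a ray --- and then pairs a ray of slope $\phi$ with the billiard path of initial angle $\phi$; you instead build the explicit \emph{folding} map $\Phi=(\rho,\rho)$ from the quadrant onto $[0,1]^2$ and read off the reflection law from the sign changes of $\rho'$. Your version is more rigorous (the isometry on each tile and the parity bookkeeping are spelled out), while the paper's version is pictorial and leans on Figure~11. One genuine discrepancy is the corner convention: you let both components of $\rho$ flip simultaneously, so the billiard path reverses and retraces itself, whereas the paper handles a corner hit by reflecting the table across the diagonal of slope $-1$ through that corner, which continues the path through rather than bouncing it back. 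Either convention is acceptable for the subsequent use of the proposition (intersection with the centered square $G(\alpha)$ is insensitive to this choice), but it is worth being aware that the two conventions do not agree.
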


Where a ray ``corresponds" to a billiard path (and vice versa) if the billiard path in the square can be ``unfolded" into a strait line. The following figures demonstrate such an unfolding of the ray $y = \fr{x}{2}$. On the left, Figure 10 shows the ray for $0 \le x \le 4$. In Figure 11 on the right, the corresponding billiard path is shown with each path segment marked with its corresponding ray segment.

\includegraphics[scale = .6]{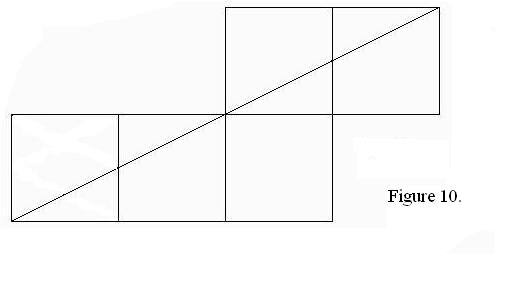} \,\,\,\, \includegraphics[scale = .6]{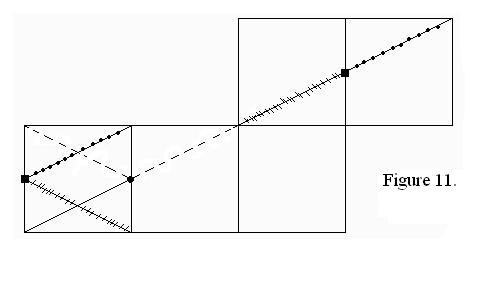}

\begin{proof}[Proof of Proposition]
\emph{(Each billiard path corresponds to a ray)}: To prove the result, we need to associate each billiard path to a ray in the first quadrant. We are assuming that each billiard path has an initial trajectory at the origin. If this first trajectory is vertical or horizontal there is nothing to prove, as each respectively corresponds to a vertical or horizontal ray. We call such billiard paths $trivial$.
Assume then that the initial angle $\phi$ of the trajectory has radian measure $0 < \phi < \pi/2$. By symmetry it suffices to assume that $0< \phi < \pi/4$, for every such trajectory with $\phi > \pi/4$ can be adequately reflected to resemble a path with $\phi < \pi/4$. Every billiard path is entirely characterized by this angle $\phi$, as it uniquely determines the first incident angle of reflection. Instead of reflecting the path inside the billiard table, we reflect the billiard table across the side where this first incident reflects. This process is described in Figure 11 below, where $\theta$ is this first incident angle.

\includegraphics[scale=.7]{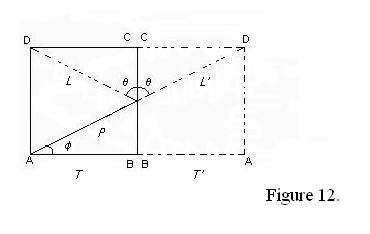}

The subsequent path reflections in the first table $T$ will correspond with their reflection in the new table $T'$. Thus the entire billiard path, (excluding the first segment $P$), corresponds to a mirror image in $T'$. We call this image $I$. The segment $L$ has as its image $L'$. Adjoining $L'$ to $P$ extends the ray segment of angle $\phi$ that had begun with $P$. Repeating this process on the billiards image $I$ in $T'$ proves the demonstration, with one note: If the first segment of the ray does not reflect across a side, but instead hits a corner, then reflect the table about the line of slope $-1$ at this corner and take the images in this new square table to correspond with this reflection. We can thus extend each billiard path to a ray which has the same slope as the initial trajectory of the billiard path.
\\
\emph{(Each ray corresponds to a billiard path)}: If a ray $r$ has slope $\phi > 0$ from the horizontal axis, correspond $r$ with the billiard path that has $\phi$ as its initial trajectory angle. Thus the produced correspondence between the rays and billiard paths is a bijection.
\end{proof}

Given a square billiard table $T$ with unit side lengths, we let $G(\alpha)$ be the square with side length $\alpha$ with the same center as $T$, i.e., $G(\alpha)$ is the scaling of $T$ by $\alpha$. A natural problem is to find the smallest $\alpha$ such that $G(\alpha)$ intersects every nontrivial billiard path. This problem is closely tied to the two-dimensional case of the view-obstruction problem discussed in the last section. Using the following lemma we show that these problems are equivalent.

\begin{lemma}
In a square billiard table, $G(\alpha)$ is invariant under the reflections given in Proposition 21.
\end{lemma}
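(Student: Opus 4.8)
The plan is to track how the square $G(\alpha)$ behaves under each of the two reflection operations used in the proof of Proposition 21: the reflection of the billiard table across one of its sides, and the reflection of the table across a line of slope $-1$ through a corner. In both cases $G(\alpha)$ is the concentric scaled copy of the unit table $T$, so I want to show that when we replace $T$ by its reflected image $T'$, the image of $G(\alpha)$ under the same reflection coincides with the square $G(\alpha)$ associated to $T'$ (i.e.\ the concentric $\alpha$-scaling of $T'$).

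First I would handle the side-reflection. Let $T$ have center $p$ and let $\ell$ be the line containing the side across which we reflect; the reflection $\rho_\ell$ is an isometry of $\R^2$ that carries $T$ onto the adjacent unit square $T'$, and carries $p$ to the center $p'$ of $T'$. Since $G(\alpha)$ is the dilation of $T$ about $p$ by factor $\alpha$, and isometries commute with dilations in the sense that $\rho_\ell$ sends the $\alpha$-dilation of $T$ about $p$ to the $\alpha$-dilation of $\rho_\ell(T)=T'$ about $\rho_\ell(p)=p'$, the image $\rho_\ell(G(\alpha))$ is exactly the concentric $\alpha$-square of $T'$, which is by definition the $G(\alpha)$ attached to $T'$. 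The corner-reflection across a line of slope $-1$ through a vertex $v$ of $T$ is handled identically: it is again an isometry carrying $T$ onto a unit square $T''$ sharing the vertex $v$, carrying the center $p$ to the center $p''$ of $T''$, and hence carrying the concentric $\alpha$-square of $T$ to the concentric $\alpha$-square of $T''$. Iterating, every reflection appearing in the unfolding procedure of Proposition 21 preserves the family of concentric $\alpha$-squares, which is the assertion.

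The only point requiring a little care — and the step I expect to be the main (mild) obstacle — is making precise the claim that ``$G(\alpha)$ is invariant,'' since strictly speaking the reflection moves $G(\alpha)$ off of itself just as it moves $T$ off of itself. The content of the lemma is that $G(\alpha)$ is invariant \emph{as a feature attached to the table}: at every stage of the unfolding, the obstructing square sitting in the current copy of the table is the image of the obstructing square in the previous copy. So I would state and use the lemma in that relative form, verifying the commutation of dilation with the specific isometries above, and note that as a consequence a billiard path meets $G(\alpha)$ in $T$ if and only if its unfolded ray meets the translate of $G(\alpha)$ to the center of the corresponding unit cell — i.e.\ meets a member of $\D(C_2,\alpha)$ — which is precisely what is needed to transfer the view-obstruction result of Theorem 20 to the billiard setting.
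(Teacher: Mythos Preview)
Your proposal is correct and follows essentially the same approach as the paper: both arguments observe that the relevant reflections are isometries carrying the center of $T$ to the center of $T'$, so the concentric $\alpha$-square of $T$ is carried to the concentric $\alpha$-square of $T'$. You are in fact more thorough than the paper---you handle the corner (slope $-1$) reflection explicitly and you clarify what ``invariant'' means in this relative sense---whereas the paper's proof treats only the side-reflection and appeals to a figure.
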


\begin{proof}
In a square billiard $T$, let $T'$ be its reflection about a side. The statement in the lemma means that the image of $G=G(\alpha)$ in $T'$ corresponds to a square $G'$ with side length alpha with the same center as $T'$. This is immediate from the symmetry of $G$ in $T$ and the fact that the center of $T'$ is the image of the center of $T$, in the reflection. This is shown in Figure 13 below.
\\
\includegraphics[scale = .6]{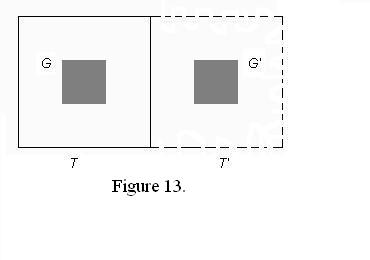}
\end{proof}

\begin{corollary}
Let $T$ be a billiard table and $T'$ a reflection as given in Proposition 20. If $S$ is a billiard path in $T$ that intersects $G=G(\alpha)$, then the image of $S$ in $T'$ intersects $G'$.
\end{corollary}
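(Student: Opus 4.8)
The corollary follows almost immediately from the preceding lemma, so the plan is short. First I would recall the setup: $T$ is a unit square billiard table, $T'$ is its reflection across one of the sides (or, in the corner case, across the line of slope $-1$ through that corner), and $S$ is a billiard path in $T$ meeting the central square $G = G(\alpha)$. By the construction in Proposition 21, the reflection that sends $T$ to $T'$ is an isometry of the plane (a reflection across a line), and this same isometry carries the billiard path $S$ to its image in $T'$; call this image $S'$.

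The key step is to invoke Lemma 24: under the reflection sending $T$ to $T'$, the image of $G=G(\alpha)$ is exactly the square $G'$ of side length $\alpha$ centered at the center of $T'$. This is the content just established, using that a reflection maps the center of $T$ to the center of $T'$ and preserves the axis-aligned square $G$ up to the reflected copy $G'$. Once this is in hand, the argument is a one-line consequence of the fact that an isometry preserves incidence: if $p \in S \cap G$, then applying the reflection $\rho$ gives $\rho(p) \in \rho(S) \cap \rho(G) = S' \cap G'$, so $S' \cap G' \neq \emptyset$, i.e., the image of $S$ in $T'$ intersects $G'$. I would also note the corner case separately only to the extent of saying that the reflection across the slope-$-1$ line through a corner is likewise an isometry fixing the configuration's relevant symmetry, so the same reasoning applies verbatim.

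There is essentially no obstacle here: the only thing to be careful about is making sure the reflection in question is genuinely an isometry of the ambient plane (it is, being a Euclidean reflection across a line), and that Lemma 24 is being applied to precisely this reflection. So the proof I would write is: let $\rho$ be the reflection taking $T$ to $T'$; by Lemma 24, $\rho(G) = G'$; since $\rho$ is an isometry it maps the billiard path $S$ to its image $S'$ in $T'$ and preserves intersections, hence $\emptyset \neq \rho(S \cap G) = S' \cap G'$, which is the assertion.

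\begin{proof}
Let $\rho$ denote the reflection of the plane used in Proposition 21 to pass from $T$ to $T'$, so that $\rho(T) = T'$ and $\rho$ carries the billiard path $S$ to its image $S'$ in $T'$. Since $\rho$ is an isometry of the plane, it preserves intersections of subsets. By Lemma 24, the image of $G = G(\alpha)$ under $\rho$ is the square $G'$ of side length $\alpha$ with the same center as $T'$. Now suppose $S$ intersects $G$, and pick $p \in S \cap G$. Then $\rho(p) \in \rho(S) \cap \rho(G) = S' \cap G'$, so $S' \cap G' \neq \emptyset$; that is, the image of $S$ in $T'$ intersects $G'$. The same argument applies verbatim in the corner case, where $\rho$ is instead the reflection across the line of slope $-1$ through the corner, since this is again an isometry and Lemma 24 covers this reflection as well.
\end{proof}
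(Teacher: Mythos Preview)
Your proof is correct and takes essentially the same approach as the paper: both invoke the lemma to identify $\rho(G)$ with $G'$ and then use that the reflection preserves incidence. The only cosmetic difference is that the paper phrases it as a one-line contrapositive (``if not, some point of $G$ would fail to be reflected into $G'$, contradicting the lemma''), whereas you argue directly by picking $p\in S\cap G$ and applying $\rho$.
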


\begin{proof}
If this were not so then there would be a point inside (resp. on) $G$ that was not reflected inside (on) $G'$, contradicting the lemma.
\end{proof}

Since $T$ is also a reflection of $T'$. A billiard path intersects $G$ if and only if its image in $T'$ intersects $G'$. By inductively applying the above lemma and corollary, the respective results holds for any finite number of reflections of a billiard.

\begin{theorem}
$\inf\set{ \alpha : G(\alpha) \emph{ obstructs every nontrivial billiard path in $T$}} = \f{3}.$
\end{theorem}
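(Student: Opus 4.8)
The plan is to show that the quantity in question is exactly $K_2$, the two-dimensional view-obstruction constant, and then invoke Theorem 20, which gives $K_2 = \f{3}$. The bridge is the unfolding correspondence of Proposition 21 together with the reflection-invariance of $G(\alpha)$ recorded in Lemma 22 and Corollary 23.

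First I would describe the geometric picture produced by unfolding. Tile the first quadrant by the unit squares $[m_1, m_1+1]\times[m_2,m_2+1]$, $m_i \in \n_0$. A nontrivial billiard path started at the origin in $T = [0,1]^2$ unfolds (Proposition 21) into a ray $r(t) = (a_1 t, a_2 t)$ with $a_1, a_2 > 0$, and the successive reflected copies of $T$ produced along the unfolding are precisely the unit squares of this tiling. Applying Lemma 22 one reflection at a time (and, in the corner case of Proposition 21, the reflection about the slope $-1$ line through that corner, under which $G(\alpha)$ is also symmetric), the image of the central square $G(\alpha) \con T$ inside the copy of $T$ sitting over $[m_1,m_1+1]\times[m_2,m_2+1]$ is the square of side $\alpha$ centred at $(m_1 + \f{2}, m_2 + \f{2})$. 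Hence the collection of all these images is $\D(C_2, \alpha)$ (together with the square centred at $(\f{2},\f{2})$ coming from $T$ itself; whether or not one counts this square is immaterial, exactly the normalization already present in the proof of Theorem 20).

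Second I would transfer the intersection condition. By Corollary 23, applied inductively along the unfolding, a billiard path $S$ in $T$ meets $G(\alpha)$ if and only if at every stage the image of $S$ meets the corresponding image of $G(\alpha)$; equivalently, the straightened ray $r$ corresponding to $S$ meets the union described above, i.e. $r$ is obstructed by $\D(C_2,\alpha)$. Ranging over all nontrivial billiard paths and using that Proposition 21 is a bijection between them and the rays of finite positive slope, we get: $G(\alpha)$ obstructs every nontrivial billiard path in $T$ if and only if $\D(C_2,\alpha)$ obstructs every ray $r(t) = (a_1 t, a_2 t)$ with $a_1, a_2 > 0$, i.e. every view in $E_2$. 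Taking the infimum over admissible $\alpha$ on both sides, the left-hand infimum equals $K_2$, which is $\f{3}$ by Theorem 20.

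The main obstacle I expect is the bookkeeping of the unfolding: verifying that the reflected copies of $T$ run over the whole first-quadrant tiling without omission or conflicting repetition, that the reflected images of $G(\alpha)$ therefore exhaust $\D(C_2,\alpha)$ exactly, and that the corner-reflection case does not break Lemma 22 (it does not, since $G(\alpha)$ is symmetric about both diagonals of $T$ as well as both its midlines). A secondary point is pinning down the infimum at the endpoint $\alpha = \f{3}$: one should confirm, as in the proof of Theorem 20, that $\D(C_2,\f{3})$ obstructs every positive-slope ray while $\D(C_2,\f{3}-\e)$ does not for $\e>0$, and then fold the unobstructed ray (of slope $2$ or $\f{2}$ through the square corners) back into $T$ to exhibit, for each $\alpha < \f{3}$, a nontrivial billiard path that $G(\alpha)$ misses — this gives the ``$\ge \f{3}$'' half, while the containment argument above gives ``$\le \f{3}$''.
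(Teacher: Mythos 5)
Your proposal is correct and follows essentially the same route as the paper: unfold via Proposition 21, transport $G(\alpha)$ to the squares of $\D(C_2,\alpha)$ using Lemma 22 and Corollary 23, invoke Theorem 20 for $K_2=\f{3}$, and exhibit the slope-$2$ (or slope-$\f{2}$) path touching only the boundary of $G(\f{3})$ for sharpness. Your version is somewhat more explicit about the bijection and the exact identification of the reflected images with $\D(C_2,\alpha)$, but the ingredients and logic coincide with the paper's proof.
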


\begin{proof}
Let $S$ be a nontrivial billiard path in $T$ with initial an trajectory of $\phi >0$. Let $r$ be the ray corresponding the $S$ according to Proposition 21. Assume for contradiction that $S$ does not intersect $G(1/3)$.  Pick any segment, $J$, of $r$ inside a unit square, $T'$, centered at some half-integer $x$. The construction in Proposition 21 shows that this segment corresponds to a segment of the billiard path $S$ via multiple reflections. By Lemma 21, $G(1/3)$ corresponds by these reflections to a square $G'(1/3)$ that has center $x$, and by Corollary 23, the segment $J$ does not intersect $G'(1/3)$. Thus no segment of the ray $r$ intersects a square with length $1/3$ centered at a half-integer. But this contradicts Theorem 20 in the previous section, for then the set $\D(C_2, \f{3})$ would not obstruct the ray $r$ which has positive slope since $\phi > 0$. Thus, every nontrivial billiard path intersects $G(\f{3})$. The billiard path show in figure 11 only intersects the boundary of $G(\f{3})$. This is shown in Figure 14 below. 

\includegraphics[scale=.5]{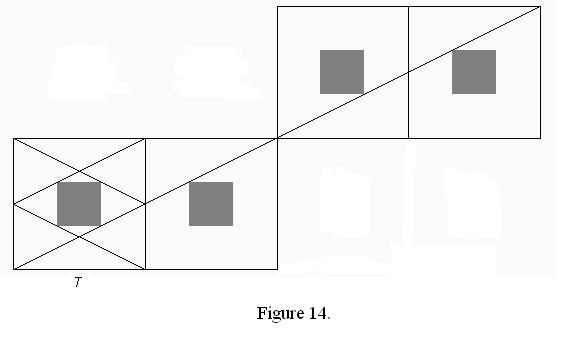}

Thus $1/3$ is the infimum of the $\alpha$ such that $G(\alpha)$ intersects every nontrivial billiard path. 
\end{proof}

\subsection{Billiard Paths in triangular tables}
In this subsection we investigate the same questions covered above but considering billiard paths in a regular triangle, $Q$, of unit side length. As in the previous, all billiard paths have their initial trajectory from the origin, which in our case is the lower left corner of our triangle $Q$. Below is an example of such a billiard with initial trajectory angle of $\phi = \fr{\pi}{4}$.

\includegraphics[scale=.3]{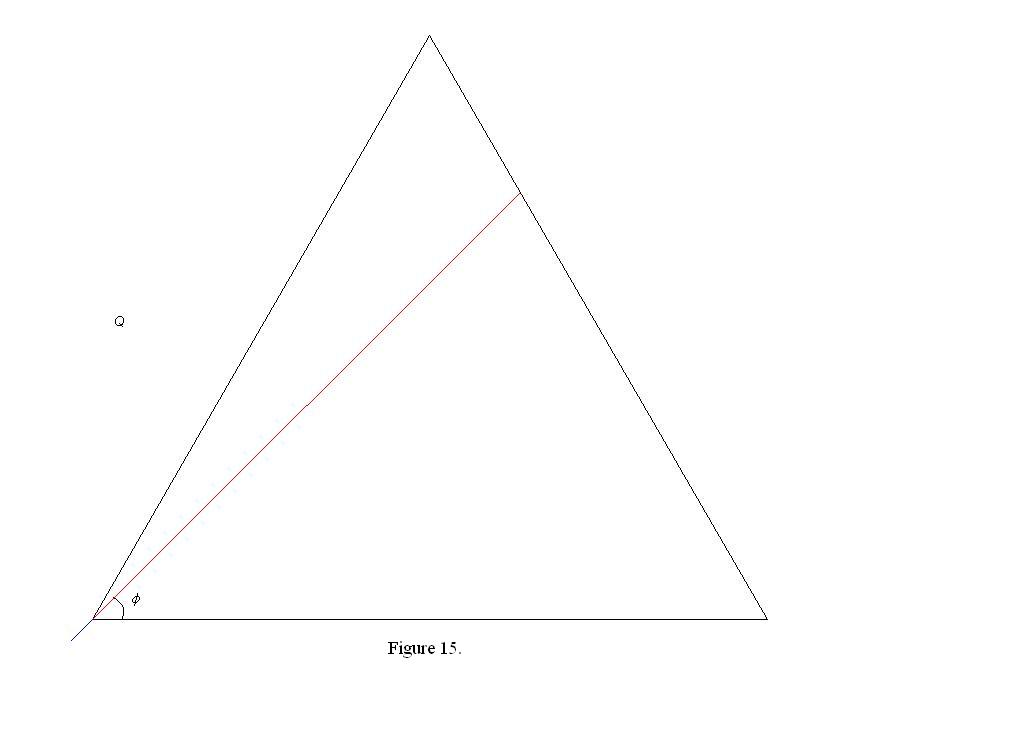} \includegraphics[scale=.3]{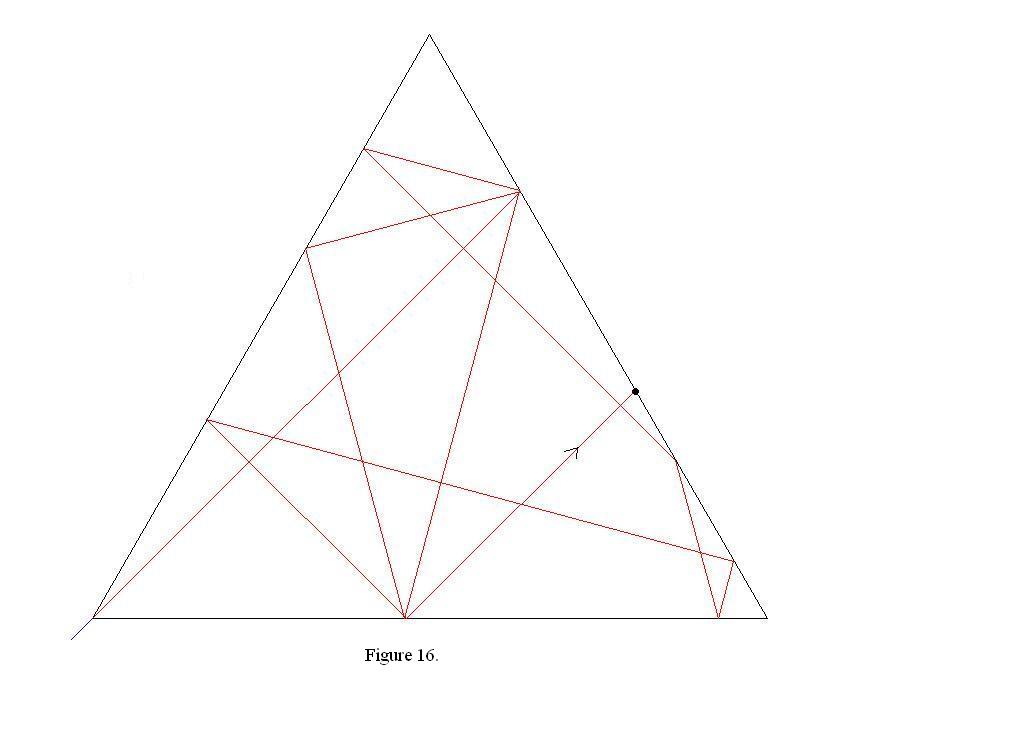}

\includegraphics[scale=.3]{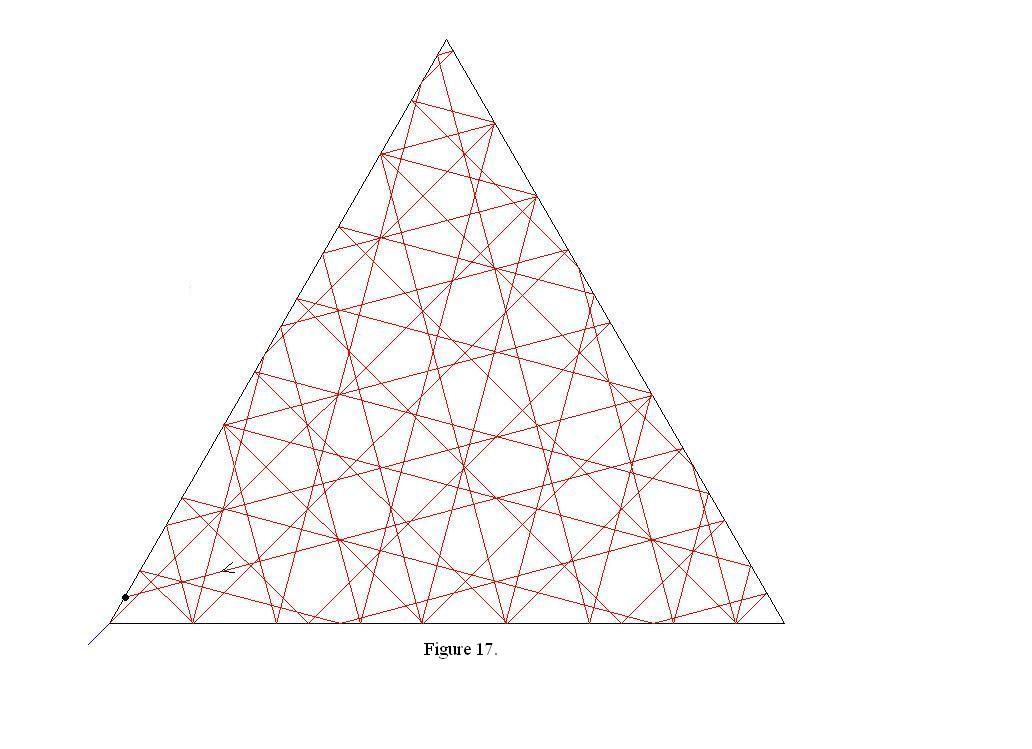} \includegraphics[scale=.3]{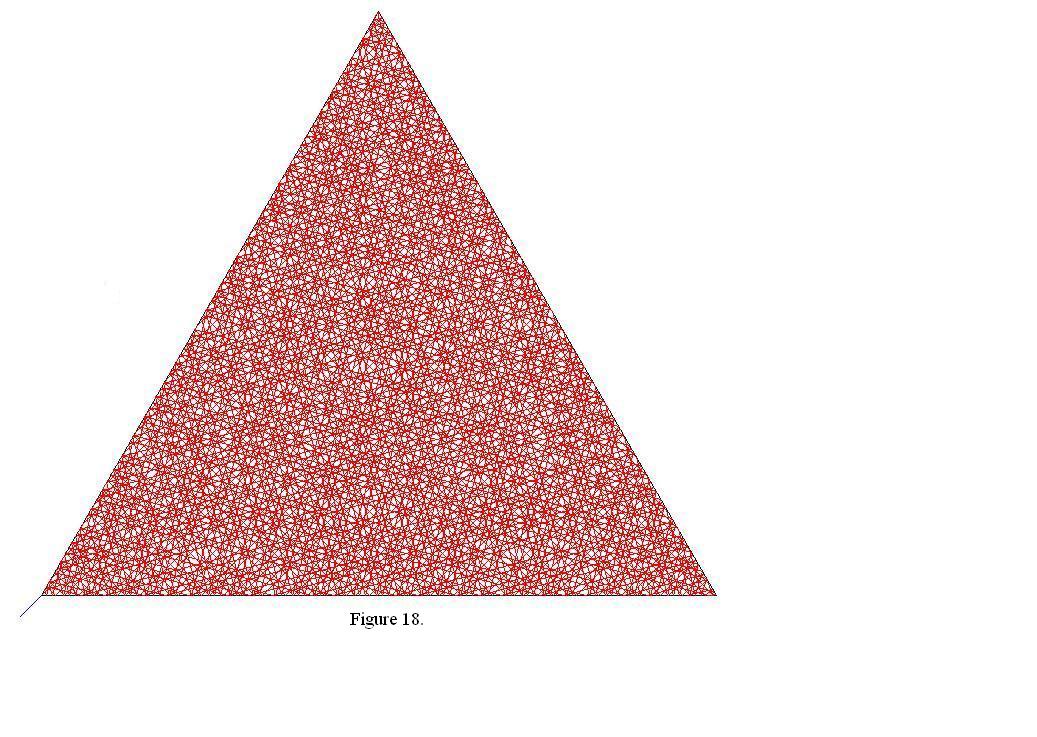}

Figure 16 on the upper right displays ten reflections of the path inside $Q$, with the eleventh table strike taking place at the indicating dot. Figure 17 in the lower left displays 50 reflections with the $51^{\emph{st}}$ strike at the dot. Figure 18 shows the billiard path at five-hundred strikes (499 reflections). For $0 < \alpha < 1$, define $H(\alpha)$ to be the scaling of $Q$ by $\alpha$. Thus $H(\alpha)$ and $Q$ have the same triangular incenter. Figure 19 below displays $H(\alpha)$, for $\alpha = 1/4$, contained in $Q$.

\includegraphics[scale=.8]{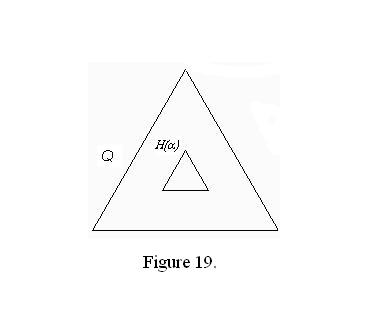}

We solve the analog of Theorem 24 for triangular billiards. That is, we find the smallest number $\alpha$ such that $H(\alpha)$ intersects every nontrivial billiard path in $Q$. Actually, $a \,\, priori$ we cannot know there is a smallest number, but must find 
$$\beta = \inf\set{\alpha : H(\alpha) \text{ intersects every nontrivial billiard path in } Q},$$
and must then check to see if $\beta$ is indeed in the set.
Because the equilateral triangle $Q$ generates a tilling of the euclidean plane with reflections across a side; and since the image of $H(\alpha)$ in the reflected triangle $Q'$ is $H'(\alpha)$, i.e., a scaling of $Q'$ by $\alpha$, we have the analogous results of Proposition 21, Lemma 22, Corollary 23 corresponding to these triangular billiards. That is, instead of rays in the first quadrant, we consider rays the euclidean plane with a horizontal angle between 0 and $\fr{\pi}{3}$.

\begin{proposition}
Each ray with horizontal angle between 0 and $\fr{\pi}{3}$ corresponds with a billiard path in $Q$. Likewise each billiard path in $Q$ corresponds to a ray in horizontal angle between 0 and $\fr{\pi}{3}$.
\end{proposition}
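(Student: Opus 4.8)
The plan is to imitate the argument for the square case (Proposition 21) using the fact that an equilateral triangle of unit side length tiles the plane by reflections across its sides, and that this tiling is exactly the standard triangular tiling whose reflection group acts with the triangle as fundamental domain. The key observation is that ``unfolding'' a billiard path in $Q$ — replacing each reflection of the path off a side with a reflection of the table across that side — produces a straight ray in the plane, and conversely each straight ray emanating from the origin (the chosen corner of $Q$) folds back down into a billiard path. Because the triangular tiling is generated by side-reflections, every segment of the ray lies inside some triangle of the tiling, and that triangle is the image of $Q$ under a composition of reflections; pulling the ray segment back through those reflections gives the corresponding portion of the billiard path.

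First I would fix the triangle $Q$ with one vertex at the origin and two sides emanating from it, so that a ray from the origin with horizontal angle $\phi \in (0,\pi/3)$ initially lies inside $Q$. If $\phi = 0$ or $\phi = \pi/3$ the ray runs along a side of $Q$ and I would treat these as the ``trivial'' paths (analogous to vertical/horizontal paths in the square case), so assume $0 < \phi < \pi/3$. Next I would describe the unfolding: when the billiard path first strikes a side $e$ of $Q$, reflect $Q$ across $e$ to get $Q'$, and observe that the remaining path in $Q$ corresponds to its mirror image in $Q'$; the segment that would have bounced now continues straight into $Q'$. Iterating, the path unfolds to a straight ray through a sequence of triangles $Q = Q^{(0)}, Q^{(1)}, Q^{(2)}, \dots$, each obtained from the previous by reflection across a shared side. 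One must note, as in the square case, the exceptional event where the path hits a vertex of $Q$ rather than the interior of a side; there I would reflect across the appropriate side through that vertex (equivalently, rotate about the vertex) so that the unfolding still produces a straight continuation. Conversely, given a ray $r$ from the origin with angle $\phi \in (0,\pi/3)$, I would define the corresponding billiard path by folding: the first time $r$ leaves $Q$, it crosses a side $e$ into the adjacent triangle $Q'$; reflect everything beyond that crossing back across $e$, and continue. This fold-back is the inverse of the unfolding, so the two constructions are mutually inverse, giving the claimed bijection (after the trivial cases are matched with the two boundary angles).

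The step I expect to be the main obstacle is verifying that the unfolding of a billiard path is genuinely a \emph{straight} ray in the plane — i.e., that the reflection-tiling of the plane by $Q$ is consistent with the local ``reflect the table instead of the path'' move at every strike, including at vertices where three or six triangles of the tiling meet. In the square case the tiling group is just $(\Z \rtimes \{\pm 1\})^2$ and reflections across successive sides commute nicely; for the equilateral triangle the reflection group is the affine Weyl group of type $\tilde A_2$, and one must check that the sequence of reflections encountered by the path composes correctly so that no ``seam'' is introduced. I would handle the generic case (strikes only in the interiors of sides) first, where each reflection is across a line containing a side of the current triangle and the argument is a routine induction, and then dispose of the vertex case separately by the rotation-by-$\pi/3$ (or its multiple) trick, exactly as the square-case proof dispenses with corner hits via reflection across the slope $-1$ line. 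The remaining assertions — that the unfolded ray has the same horizontal angle $\phi$ as the initial trajectory, and that folding is inverse to unfolding — then follow formally, completing the proof.
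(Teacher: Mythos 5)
Your proposal is correct and takes essentially the same route the paper intends: the paper does not actually write out a proof of this proposition, but simply asserts that it follows by the same unfolding argument as Proposition 21 because the equilateral triangle tiles the plane by reflections across its sides. Your write-up supplies the details the paper omits (the consistency of the reflection tiling, the vertex-hit case, and the fold-back inverse), all in the spirit of the square-table proof.
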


\begin{lemma}
In an equilateral triangular billiard table, $H(\alpha)$ is invariant under the reflections given in Proposition 25.
\end{lemma}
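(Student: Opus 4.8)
The plan is to mimic the proof of Lemma 22 almost verbatim, with the equilateral triangle $Q$ playing the role of the unit square $T$. First I would recall that by Proposition 25 (and the fact that the equilateral triangle tiles the plane under reflections across its sides), the relevant reflections are precisely reflections of the triangle $Q$ across one of its three sides. So let $Q'$ denote the reflection of $Q$ across a fixed side $e$, and let $\rho$ denote the reflection map of the plane fixing the line through $e$; then $\rho(Q) = Q'$.

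The key observation is that $H(\alpha)$ is defined as the scaling of $Q$ by $\alpha$ about the incenter of $Q$, and since $Q$ is equilateral, the incenter coincides with the centroid, the circumcenter, and the center of symmetry of $Q$; moreover the three side-reflections of $Q$ are exactly the symmetries that permute the vertices while fixing this center — no, more carefully: the reflection $\rho$ does not fix $Q$, it maps $Q$ to the adjacent triangle $Q'$. But $\rho$ is an isometry, so it maps the incenter $c$ of $Q$ to the incenter $c'$ of $Q'$ (the incenter is an isometry-invariant notion), and it maps the scaling-by-$\alpha$-about-$c$ map to the scaling-by-$\alpha$-about-$c'$ map. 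Hence $\rho(H(\alpha))$ is the scaling of $\rho(Q) = Q'$ by $\alpha$ about $c'$, which is by definition $H'(\alpha)$. This is exactly the content of the lemma. So the proof is: $H(\alpha) = \alpha(Q - c) + c$; applying the affine isometry $\rho$ gives $\rho(H(\alpha)) = \alpha(\rho(Q) - \rho(c)) + \rho(c) = \alpha(Q' - c') + c' = H'(\alpha)$, using that $\rho$ is affine with linear part an orthogonal map (so it commutes with scaling up to moving the center) and that $\rho(c) = c'$ since incenters are preserved by isometries.

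I would then remark, as in the square case, that the figure (the triangular analog of Figure 13) illustrates this. The writeup should be short — essentially a one-paragraph proof citing the symmetry of $H(\alpha)$ about the incenter of $Q$ and the fact that reflections carry incenters to incenters.

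The main (and really only) obstacle is a presentational one: making sure that ``the reflections given in Proposition 25'' are unambiguously the three side-reflections generating the triangular tiling, and that $H'(\alpha)$ has been defined to mean ``the scaling of $Q'$ by $\alpha$ about its incenter'' — both of which the surrounding text has already set up. There is no real mathematical difficulty; equilaterality is what makes the incenter an isometry-equivariant center, so the argument does not extend to general triangles, but that is not needed here.

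\begin{proof}
Let $Q'$ be the reflection of $Q$ across one of its sides, and let $\rho$ be the corresponding reflection of the Euclidean plane, so that $\rho(Q) = Q'$ and $\rho$ is an isometry with $\rho \circ \rho = \tx{id}$. Let $c$ be the incenter of $Q$. Since $Q$ is equilateral, $c$ is also the centroid and center of symmetry of $Q$, and $H(\alpha)$ is by definition the image of $Q$ under the map $x \mapsto \alpha(x - c) + c$. Because $\rho$ is an affine isometry whose linear part is orthogonal, and because any isometry carries the incenter of a triangle to the incenter of its image, we have $\rho(c) = c'$, the incenter of $Q'$. Therefore
$$\rho(H(\alpha)) = \rho\bigl(\alpha(Q - c) + c\bigr) = \alpha\bigl(\rho(Q) - \rho(c)\bigr) + \rho(c) = \alpha(Q' - c') + c' = H'(\alpha),$$
where $H'(\alpha)$ denotes the scaling of $Q'$ by $\alpha$ about its incenter. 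Thus the image of $H(\alpha)$ under the reflection is precisely the triangle $H'(\alpha)$ sitting inside $Q'$ in the same way that $H(\alpha)$ sits inside $Q$; that is, $H(\alpha)$ is invariant under the reflections of Proposition 25. This is depicted in the figure below.
\end{proof}
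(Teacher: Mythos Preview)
Your proof is correct and follows essentially the same approach the paper takes: the paper does not write out a separate proof of this lemma but simply declares it as the triangular analog of Lemma~22, whose proof is the one-line observation that the center of $T$ reflects to the center of $T'$ and symmetry does the rest. Your argument is that same idea made explicit via the affine formula $\rho(\alpha(Q-c)+c)=\alpha(Q'-c')+c'$, so there is nothing materially different to compare. One small remark: your aside that ``equilaterality is what makes the incenter an isometry-equivariant center, so the argument does not extend to general triangles'' is not quite right---the incenter of \emph{any} triangle is carried to the incenter of its isometric image, so the invariance of $H(\alpha)$ under side-reflections would hold for any triangle; what equilaterality buys you elsewhere is that the reflected copies tile the plane, which is what Proposition~25 needs.
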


\begin{corollary}
Let $Q$ be an equilateral triangular billiard table and $Q'$ a reflection as in Proposition 24. Then if a billiard path $S$ in $Q$ intersects $H(\alpha)$, then the image of $S$ in $Q'$ intersects $H'(\alpha)$.
\end{corollary}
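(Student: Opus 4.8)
The plan is to reproduce the one-line argument used for Corollary 23, with the square $G(\alpha)$ replaced by the triangle $H(\alpha)$ and with Lemma 26 in place of Lemma 22. I would argue by contradiction. Suppose $S$ is a (nontrivial) billiard path in $Q$ that meets $H(\alpha)$ --- say $p$ is a point of $S$ lying in $H(\alpha)$, either in its interior or on its boundary --- but that the image $S'$ of $S$ in $Q'$ fails to meet $H'(\alpha)$ in the corresponding sense. Let $\rho$ be the Euclidean reflection across the side shared by $Q$ and $Q'$. By Proposition 25 the passage from a billiard path to its image in the reflected table is effected precisely by $\rho$, so $S' = \rho(S)$, and in particular $\rho(p) \in S'$.

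Next I would apply Lemma 26: $H(\alpha)$ is invariant under the reflection in the sense that $\rho$ carries it onto $H'(\alpha)$. Indeed $\rho$ is a bijective isometry sending the incenter of $Q$ to the incenter of $Q'$, so it sends the $\alpha$-scaling of $Q$ about the former incenter to the $\alpha$-scaling of $Q'$ about the latter; that is, $\rho(H(\alpha)) = H'(\alpha)$. Moreover, being a homeomorphism, $\rho$ sends interior points to interior points and boundary points to boundary points. Hence $\rho(p)$ lies in $H'(\alpha)$ (in the interior, resp.\ on the boundary, accordingly), and since $\rho(p) \in S'$ this says $S'$ meets $H'(\alpha)$ in exactly the sense we had assumed it did not --- a contradiction. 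Therefore $S'$ intersects $H'(\alpha)$.

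As in the square case, I would then remark that $Q$ is itself the reflection of $Q'$ across the same side, so the implication is in fact an equivalence; iterating it gives the analogous statement for any finite composition of the reflections that unfold a triangular billiard path, which is the form actually needed when computing $\beta$ for $Q$.

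I do not expect a genuine obstacle here: the content is entirely formal once Proposition 25 and Lemma 26 are available. The only points requiring care are the parenthetical ``interior versus boundary'' bookkeeping --- handled by noting that $\rho$ is a homeomorphism --- and the trivial but essential observation that $\rho$ is a bijection, so a point of $S'$ that missed $H'(\alpha)$ could only be the image of a point of $S$ outside $H(\alpha)$; this is exactly what makes the contrapositive go through.
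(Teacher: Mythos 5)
Your argument is correct and is essentially the paper's: the paper proves the square-table version (Corollary 23) by exactly this one-line contradiction via the invariance lemma, and states the triangular version as following analogously from Lemma 26. You have simply filled in the routine details (that the reflection is a bijective isometry carrying incenter to incenter, hence $H(\alpha)$ onto $H'(\alpha)$), which matches the intended proof.
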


As is the case with square billiard tables, $Q$ is also a reflection of $Q'$. Thus a billiard path intersects $H(\alpha)$ if and only if its image in $Q'$ intersects $H'(\alpha)$. Inductively applying the above lemma or corollary, the respective results hold for any finite number of reflections, i.e., unfoldings, of a triangular billiard.

We are almost ready to prove the corresponding result of Theorem 24 for triangular billiards. However, we lack the analog of Theorem 20, which says that $\D(C_2, \f{3})$ obstructs all views. Effectively this result says that if one tiles the plane with unit cubes, then scaling each square by $\f{3}$ will produce a set that will block all rays (with positive slope). This was our the main result in view-obstruction that was equivalent to the LRC with three runners. We state the triangular billiards' analog to this as a lemma which is proved after the theorem.

\begin{lemma}
Let $Q$ be an equilateral triangle with unit length, and let unfoldings of $Q$ tile the region between the rays with angles 0 and $\pi/3$. So a sequence of unique triangles $\set{Q_n}_1^\infty$ tiles this region. Then $\set{H_n(\f{4})}_1^\infty$ obstructs all rays in the region, and $\f{4}$ is the least such number to do so.
\end{lemma}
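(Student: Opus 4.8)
The plan is to transport the problem, by a linear change of coordinates, to the square tiling of the first quadrant, then to reduce it modulo $\z^2$ to a statement on the torus $\T^2$, and finally to recognize that statement as the lonely runner problem for four runners with speeds $0,a,b,a+b$. Concretely, fix the linear map $A\colon\R^2\to\R^2$ with $A(1,0)=(1,0)$ and $A(0,1)=(\f2,\fr{\sqrt3}{2})$. Then $A$ carries the first quadrant onto the wedge between the rays of angle $0$ and $\pi/3$, and carries the tiling of the first quadrant by unit squares (each cut along the segment from $(m+1,n)$ to $(m,n+1)$) onto the unfolding tiling $\set{Q_n}$. Since $A$ is affine it preserves incidence and centroids, the incenter of an equilateral triangle is its centroid, and scaling a triangle about its centroid commutes with $A$; hence $A$ takes $\bigcup_n H_n(\alpha)$ onto the family obtained by scaling, by $\alpha$ about its centroid, each of the two triangles of every unit cell. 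Rays from the origin in the wedge correspond bijectively to rays from the origin of positive slope, and such a ray meets some $H_n(\alpha)$ exactly when its image meets one of the scaled cell-triangles, so it suffices to work in the square picture.

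Next I would reduce modulo $\z^2$. In $\T^2=\R^2/\z^2$ there are exactly two scaled cell-triangles, with centroids $(\f3,\f3)$ and $(\fr23,\fr23)$. A ray of irrational slope is dense in $\T^2$ and so meets their interiors for every $\alpha>0$, so only rational slopes require attention, and for those the image in $\T^2$ is a closed geodesic. The key computation is that, writing $c=\fr{1-\alpha}{3}$, the union of the two scaled triangles in $\T^2$ equals $\set{(x,y)\in\T^2:\norm x\ge c,\ \norm y\ge c,\ \norm{x+y}\ge c}$: the first triangle is cut out by $x\ge c,\ y\ge c,\ x+y\le 1-c$ and the second by $x\le 1-c,\ y\le 1-c,\ x+y\ge 1+c$, and these constants sit symmetrically about $\f2$ and about $1$, so inside the box $\norm x\ge c,\ \norm y\ge c$ the forbidden band $x+y\in(1-c,1+c)$ carves out exactly those two corner triangles. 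Consequently a ray of rational slope $b/a$, with $a,b$ coprime positive integers, meets $\bigcup_n H_n(\alpha)$ if and only if some $t$ has $\min\set{\norm{at},\norm{bt},\norm{(a+b)t}}\ge\fr{1-\alpha}{3}$, that is, if and only if $\del_3(a,b,a+b)\ge\fr{1-\alpha}{3}$.

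It remains to set $\alpha=\f4$, so that $\fr{1-\alpha}{3}=\f4=\k(3)$. For the upper bound, $\del_3(a,b,a+b)\ge\inf_{s\in\n^3}\del_3(s)=\k(3)=\f4$ using Theorem 7, Proposition 9, and the known case of four runners; hence every rational ray---and therefore, by density, every ray---meets some $H_n(\f4)$. The inequality is non-strict, so the geodesic may only touch the boundary of an $H_n(\f4)$, which is precisely why $\f4$ itself belongs to the set rather than merely bounding it. For minimality, when $\alpha<\f4$ we have $\fr{1-\alpha}{3}>\f4$, while the proof of Proposition 8 gives $\del_3(1,2,3)=\f4$; thus the ray of slope $2$, corresponding to $(a,b)=(1,2)$ with $a+b=3$, meets no $H_n(\alpha)$. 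So $\beta=\f4$ and $\f4$ lies in the set, as claimed.

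The step I expect to be the main obstacle is the torus computation identifying $\bigcup_n H_n(\alpha)$ with $\set{\norm x\ge c,\ \norm y\ge c,\ \norm{x+y}\ge c}$: this is where the hidden runner of speed $0$ and the constraint tying $a+b$ to $a$ and $b$ become visible, and the whole reduction turns on the affine images of the incenters landing at $\f3$ and $\fr23$, so that the three linear inequalities defining the triangles align symmetrically into norm conditions. Once that identity is in hand, the remainder is a direct appeal to Proposition 8 and the known value $\k(3)=\f14$. One also owes the routine verifications that $A$ behaves as described, that the supremum defining $\del_3$ is attained on integer tuples, and that ``obstructs'' tolerates boundary contact.
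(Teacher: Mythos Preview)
Your argument is correct, and it takes a genuinely different route from the paper's proof.

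The paper proves Lemma 28 by a direct picture-based geometric argument: it draws the triangular tiling with the scaled triangles $H_n(\tfrac14)$, reduces by symmetry to rays with $0<\phi<\pi/6$, exhibits the critical ray $y=\fr{\sqrt3\,x}{5}$ that grazes only vertices of the $H_n(\tfrac14)$, and then argues by inspection that all rays between a few reference rays are obstructed and that the gaps only tighten below. It is self-contained but informal, leaning heavily on the figure.

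Your approach is more structural. The affine map $A$ turns the equilateral tiling into the diagonal bisection of the unit-square tiling; passing to $\T^2$ you identify $\bigcup_n H_n(\alpha)$ with the region $\{\norm{x}\ge c,\ \norm{y}\ge c,\ \norm{x+y}\ge c\}$ for $c=\fr{1-\alpha}{3}$, and hence a ray of rational slope $b/a$ is obstructed iff $\del_3(a,b,a+b)\ge c$. Setting $\alpha=\f4$ makes $c=\f4=\k(3)$, and minimality comes from $\del_3(1,2,3)=\f4$. This is a clean and illuminating reduction: it explains \emph{why} the triangular constant is $\f4$ by tying it to the four-runner case of the LRC, an equivalence the paper does not articulate. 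The price is that you import the nontrivial fact $\k(3)=\f4$ (the four-runner LRC), which the paper cites as known but does not prove; the paper's own argument for Lemma~28 is independent of that result. One small correction: the computation $\del_n(1,2,\dots,n)=\f{n+1}$ you invoke is in the proof of Proposition~9, not Proposition~8. Also worth noting: your critical ray $(a,b)=(1,2)$ and the paper's ray of slope $\sqrt3/5$ correspond to $(a,b)=(1,2)$ and $(2,1)$ respectively---the same speed set $\{1,2,3\}$, related by the $\phi\leftrightarrow\fr{\pi}{3}-\phi$ symmetry.
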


\begin{theorem}
$\inf\set{\alpha : H(\alpha) \text{ intersects every nontrivial billiard path in } Q} = \f{4}$
\end{theorem}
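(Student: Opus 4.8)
The plan is to mirror the structure of Theorem 24 exactly, using Lemma 29 as the triangular analog of Theorem 20. By Proposition 25, every nontrivial billiard path $S$ in $Q$ corresponds to a ray $r$ with horizontal angle strictly between $0$ and $\pi/3$, and conversely. The key observation, established by Lemma 27 and Corollary 28 together with the inductive remark following them, is that under the unfoldings of $Q$ that tile the angular region between the rays of angle $0$ and $\pi/3$, the inner triangle $H(\alpha)$ is carried to a scaled-by-$\alpha$ copy $H_n(\alpha)$ of each unfolded triangle $Q_n$, and $S$ meets $H(\alpha)$ if and only if $r$ meets the corresponding $H_n(\alpha)$ for the relevant $n$. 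So ``$H(\alpha)$ obstructs every nontrivial billiard path in $Q$'' is equivalent to ``$\set{H_n(\alpha)}_1^\infty$ obstructs every ray in the region,'' and the latter infimum is exactly $\f{4}$ by Lemma 29.

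Concretely I would argue as follows. First, the lower bound $\beta \ge \f{4}$: Lemma 29 asserts that $\f{4}$ is the \emph{least} number for which $\set{H_n(\f{4})}_1^\infty$ obstructs all rays in the region; hence for any $\alpha < \f{4}$ there is a ray $r$ in the region missed by every $H_n(\alpha)$. Unfolding $r$ back into $Q$ via Proposition 25 produces a nontrivial billiard path $S$ (nontrivial since the horizontal angle of $r$ lies strictly inside $(0,\pi/3)$, so $S$ is neither along a side nor a degenerate path), and by Corollary 28 applied finitely many times, $S$ misses $H(\alpha)$. Thus no $\alpha < \f{4}$ lies in the set, so $\beta \ge \f{4}$.

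Next, the upper bound together with membership of $\beta$ in the set: I claim $H(\f{4})$ itself obstructs every nontrivial billiard path. Given such a path $S$, take its corresponding ray $r$. Pick any segment $J$ of $r$ lying inside one of the unfolded triangles $Q_n$; by the construction in Proposition 25 this segment corresponds to a segment of $S$ via finitely many reflections, and by Lemma 27 the triangle $H(\f{4})$ corresponds under those reflections to $H_n(\f{4})$, the scaling of $Q_n$ by $\f{4}$. By Lemma 29, $r$ meets $\set{H_n(\f{4})}$, i.e.\ some segment of $r$ inside some $Q_n$ meets $H_n(\f{4})$; unfolding that segment back into $Q$ via Corollary 28 shows $S$ meets $H(\f{4})$. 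Hence $\f{4}$ belongs to the set, and combined with the lower bound, $\beta = \f{4}$ and the infimum is attained.

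The main obstacle is that all the real content sits in Lemma 29 (the triangular view-obstruction statement), which the excerpt defers to a later proof; the present theorem is then essentially a translation between billiard paths and rays, exactly parallel to Theorem 24. The only points requiring genuine care are bookkeeping ones: checking that ``nontrivial'' on the billiard side corresponds precisely to ``horizontal angle in the open interval $(0,\pi/3)$'' on the ray side (so that the degenerate boundary rays, which the tiling may treat specially, are exactly the trivial paths and do not affect the infimum), and verifying that the reflections used are genuinely \emph{finitely many} for each segment $J$, so that Lemma 27 and Corollary 28 apply by the stated induction. A minor additional remark, analogous to the last lines of the proof of Theorem 24, is that one can exhibit a specific billiard path meeting only the boundary of $H(\f{4})$, confirming sharpness visually, though this is not logically needed once Lemma 29 gives the least-ness of $\f{4}$.
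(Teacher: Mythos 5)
Your proposal is correct and follows essentially the same route as the paper: translate the billiard path to its unfolded ray via Proposition 25, transport $H(\alpha)$ to the triangles $H_n(\alpha)$ via the invariance lemma and its corollary, and invoke the triangular view-obstruction lemma for both the obstruction at $\f{4}$ and the sharpness below $\f{4}$ (the paper phrases the obstruction step as a contradiction and exhibits the extremal path touching only the boundary of $H(\f{4})$, but the logic is identical).
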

\begin{proof}
Let $Q$ be an equilateral triangle with unit length and $\set{Q_n}_1^\infty$ unfoldings of $Q$ that tile the region described in Lemma 28. Let $S$ be a nontrivial billiard path in $Q$ with initial an trajectory of $0 < \phi < \pi/3$, and let $r$ be the ray corresponding to $S$ according to Proposition 25. Assume for contradiction that $S$ does not intersect $H(1/4)$.  Pick any segment, $J$, of $r$ inside some $Q_n$ with incenter $x$. By Proposition 25 the segment $J$ corresponds to a segment of the billiard path $S$ via multiple unfoldings. By Lemma 26, $H(1/4)$ corresponds by these unfoldings to a regular triangle $H_n(1/4)$ with incenter $x$, and by Corollary 26, the segment $J$ does not intersect $H_n(1/4)$. Thus no segment of the ray $r$ intersects any $H_n(\f{4})$. This immediately contradicts Lemma 28. Thus, every nontrivial billiard path intersects $H(\f{4})$. By Lemma 28, their is a ray that cannot intersect the interiors of the $H_n(\f{4})$. Thus the billiard path corresponding to this ray only intersects the corners of $H(\f{4})$, hence 1/4 is the infimum of the set in the theorem statement.
\end{proof}

We now prove the lemma.
\begin{proof}[Proof of Lemma 28]
We refer to the following figure in the proof:
\\
\includegraphics[scale = .5]{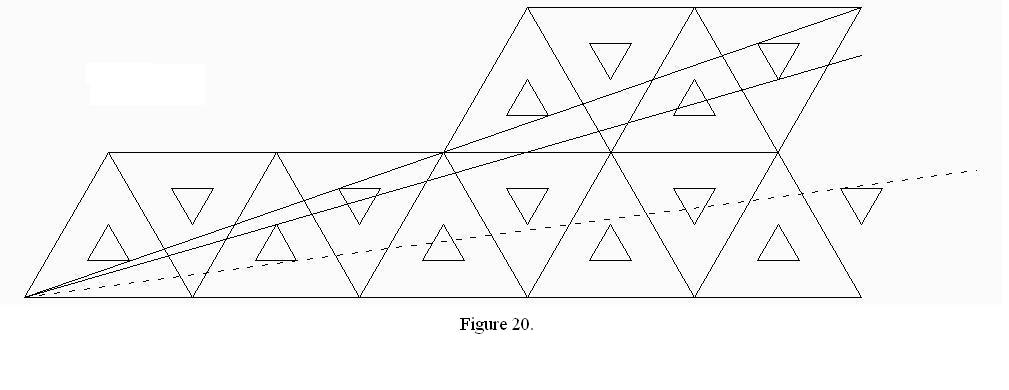}

The smaller triangles represent a portion of the set $W=\set{H_n(\f{4})}_1^\infty$, in a portion of the tiling generated by $Q$. In order to prove that every ray with slope $0< \phi < \fr{\pi}{3}$ is obstructed by the collection $W$, it suffices by symmetry to prove the result for such rays with $0 < \phi < \fr{\pi}{6}.$ 
The upper solid ray has the form $r_1 = \fr{\sqrt{3}\,x}{5}$ for $x > 0$. The ray $r_1$ intersects only the edges of sets in $W$. Call the lower solid ray $r_2$ and the dashed line $r_3$. It is evident that all rays between $r_1$ and $r_2$ intersect $W$, as do all rays between $r_2$ and $r_3$. The gaps between consecutive triangles grow tighter for rays below $r_3$, so that all rays with positive slope under $r_3$ intersect sets in $W$. This proves the lemma.
\end{proof}


\section{Invisible runners and finite fields}
The results of this section originated in 2008 in a paper by the Polish computer scientists Sebastian Czerwinski and Jaroslaw Grytczuk [1]. Recall for $s = (s_1,\dots,s_k) \in \n^k$, we define $\del_k(s) = \ds\sup_{x \in \R} \ds\min_{1 \le i \le k} \norm{x s_i}$ as described in the first section. Proposition 4 says that the LRC is equivalent to $\del_k(s) \ge 1/(k+1)$ for each $s$ and each natural number $k$. We alter this notation slightly, letting $S = \set{s_1, \dots, s_k} \con \n$ be a set of $k$ positive integers, setting $$\del(S) = \ds\sup_{x \in \R} \ds \min_{1 \le i \le k} \norm{x s_i}.$$ It is readily seen that the LRC is equivalent to $\del(S) \ge 1/(k+1)$ for each $k$ element subset of $\n$. We also define $\fl{x}$ to be the usual floor of $x$, and $\set{x}$ to be the fractional part of $x$. We prove two results in this section which are important for several reasons. First, the techniques in this section are more algebraic, leading to an algebraic conjecture that is equivalent to the LRC. Second, if one has a set of runners, these techniques can be used to give a finite algorithm for computing the time a given runner is ``loneliest". This answers a natural question as to whether such an algorithm exists. Let $S = \set{s_1, \dots, s_k} \con \n$, and let $p$ be a prime that does not divide any $s_i$. Thus, the elements of $S$ modulo $p$ is a subset of
$\z_p^*$, which we define as the set of non-zero elements of the field $\z_p=\set{0,1,\dots,p-1}$. We arrange the elements of $\z_p$ on the unit circle in a usual fashion, i.e., as the $p^{\tx{th}}$ roots of unity. For an integer $n$, the image of $n$ in $\z_p$ is denoted by $\cl{n}$.

\begin{lemma}
Let $B = \pm\set{1,2,\dots,m} \con \z_p$ and $S=\set{s_1,\dots, s_k} \con \n$. Suppose there is an $x \in \z_p^*$ such that $\cl{xs_i}$ is not in $B$ for any $i$. Then $\del(S) \ge (m+1)/p$.
\end{lemma}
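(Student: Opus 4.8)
The plan is to exhibit an explicit real number $x_0$ at which the minimum over $i$ of $\norm{x_0 s_i}$ is at least $(m+1)/p$, thereby bounding the supremum $\del(S)$ from below. The natural candidate is $x_0 = x/p$, where $x \in \z_p^*$ is the element provided by the hypothesis. With this choice, $x_0 s_i = x s_i / p$, and the value of $\norm{x_0 s_i}$ depends only on the residue $\cl{x s_i} \in \z_p$: writing $r_i \in \set{0,1,\dots,p-1}$ for the integer representative of $\cl{x s_i}$, we have $\norm{x_0 s_i} = \norm{r_i/p} = \min(r_i, p - r_i)/p$. Since $p \nmid s_i$ and $x \in \z_p^*$, we know $r_i \neq 0$, so $\cl{xs_i}$ is a genuine nonzero residue.

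Next I would translate the hypothesis ``$\cl{x s_i} \notin B$'' into a statement about $\min(r_i, p-r_i)$. The set $B = \pm\set{1,2,\dots,m}$ consists precisely of those nonzero residues whose representative $r$ satisfies $r \in \set{1,\dots,m}$ or $p - r \in \set{1,\dots,m}$, i.e., those with $\min(r, p-r) \le m$. Hence $\cl{x s_i} \notin B$ is equivalent to $\min(r_i, p - r_i) \ge m+1$. (One should note here that $m+1 \le p-m$ is forced, since otherwise $B$ would be all of $\z_p^*$ and the hypothesis could not hold; so the ``gap'' in the middle of $\set{0,\dots,p-1}$ is nonempty and this inequality is consistent.) Combining, $\norm{x_0 s_i} = \min(r_i, p-r_i)/p \ge (m+1)/p$ for every $i$.

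Finally, I would conclude: since this holds for all $1 \le i \le k$,
$$
\ds\min_{1 \le i \le k} \norm{x_0 s_i} \ge \frac{m+1}{p},
$$
and therefore $\del(S) = \ds\sup_{x \in \R} \ds\min_{1 \le i \le k} \norm{x s_i} \ge (m+1)/p$, as claimed.

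I do not expect a serious obstacle here; the only point requiring a little care is the bookkeeping around the symmetric set $B = \pm\set{1,\dots,m}$ and the elementary identity $\norm{r/p} = \min(r, p-r)/p$ for $0 \le r < p$, together with the observation that the hypothesis implicitly guarantees $m < p/2$ so that $B$ does not exhaust $\z_p^*$. Everything else is a direct substitution $x_0 = x/p$ into the definition of $\del(S)$.
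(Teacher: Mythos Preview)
Your proof is correct and follows essentially the same approach as the paper: both take the real number $x_0 = x/p$ as the witness, observe that $\norm{x_0 s_i}$ depends only on the residue $\cl{xs_i} \in \z_p$, and translate the hypothesis $\cl{xs_i} \notin B$ into the inequality $\norm{xs_i/p} \ge (m+1)/p$. Your write-up is in fact a bit more explicit than the paper's in unpacking the identity $\norm{r/p} = \min(r,p-r)/p$ and in noting that the hypothesis forces $m < p/2$.
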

\begin{proof}
Since $\cl{xs_i} \neq 0$, if $\norm{\cl{xs_i}/p} = \norm{xs_i/p} < \fr{m+1}{p}$ then we must have $\cl{xs_i}/p \in \pm \set{1/p, 2/ p ,\dots, m/p}$ so that $\cl{xs_i} \in \pm B$. This contradicts the hypotheses. Hence $\del(S) = \ds\sup_{x \in \R} \ds\min_{1\le i \le k} \norm{xs_i} \ge (m+1)/p$. If the above holds for $m = \fl{p/(k+1)}$ then $\del(S)
\ge \ds\min_{1 \le i \le k} \norm{ts_i} \ge 1/(k+1)$ where $t = x/p$.
\end{proof}

\begin{proposition}
Let $S = \set{s_1, \dots,s_k}$ be a set of positive integers. Let $\e >0$ and let $p>\fr{k}{\e} + 1$ be a prime number that does not divide any element in $S$. Then for every $d \in \set{0, 1,\dots,k}$ and
$B \con \z_p^*$ with $|B| \le p(d+1)/(k+\e)$, there is an $x \in \z_p^*$ such that $|B \cap xS| \le d$.
\end{proposition}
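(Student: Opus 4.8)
The natural approach is a counting/averaging argument over the choices of $x \in \z_p^*$, exploiting the fact that multiplication by $x$ permutes $\z_p^*$. The plan is to bound the average size of $|B \cap xS|$ as $x$ ranges over $\z_p^*$, show this average is strictly less than $d+1$, and conclude that some particular $x$ achieves $|B \cap xS| \le d$ (since $|B\cap xS|$ is a nonnegative integer).

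First I would compute $\sum_{x \in \z_p^*} |B \cap xS|$ by switching the order of summation. Writing $|B \cap xS| = \sum_{s \in S} \mathbf{1}[xs \in B]$ (here $xs$ means $\cl{xs}$, the image in $\z_p^*$, which is legitimate since $p$ divides no element of $S$), we get
$$\sum_{x \in \z_p^*} |B \cap xS| = \sum_{s \in S} \#\set{x \in \z_p^* : xs \in B}.$$
For each fixed $s \in S$, the map $x \mapsto xs$ is a bijection of $\z_p^*$, so $\#\set{x \in \z_p^* : xs \in B} = |B|$. Hence the total is exactly $k|B|$, and the average over the $p-1$ choices of $x$ is $k|B|/(p-1)$.

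The key step is then to verify that this average is strictly less than $d+1$, i.e. $k|B|/(p-1) < d+1$. Using the hypothesis $|B| \le p(d+1)/(k+\e)$, it suffices to check $kp(d+1)/((k+\e)(p-1)) < d+1$, equivalently $kp < (k+\e)(p-1)$, i.e. $kp < kp - k + \e p - \e$, i.e. $\e p > k + \e$, i.e. $p > k/\e + 1$ — which is precisely the hypothesis on $p$. Since the average of the integer-valued quantity $|B \cap xS|$ over $x \in \z_p^*$ is strictly below $d+1$, at least one $x \in \z_p^*$ must satisfy $|B \cap xS| \le d$, as desired.

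I do not anticipate a serious obstacle here; the only points requiring a little care are the bookkeeping with the strict inequality (making sure the chain of equivalences is genuinely strict, which it is because $p > k/\e + 1$ is a strict inequality) and being careful that $d$ is allowed to range over $\set{0,1,\dots,k}$, so that when $d = 0$ the statement becomes the existence of $x$ with $B \cap xS = \emptyset$, matching the flavor of Lemma 29. The main conceptual content is simply the observation that multiplication by a fixed nonzero element permutes $\z_p^*$, which makes the double sum collapse to $k|B|$.
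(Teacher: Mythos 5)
Your proposal is correct and is essentially the paper's own argument: the paper phrases the same double count as a $k\times(p-1)$ matrix $A=(\cl{js_i})$ whose rows each exhaust $\z_p^*$, giving the identical total $T=k|B|$, and then derives a contradiction from assuming every column has at least $d+1$ entries in $B$, which is just the contrapositive of your averaging step. The chain of inequalities reducing to $p>k/\e+1$ matches as well, so there is nothing to add.
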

\begin{proof}
Consider a rectangular $k \times (p-1)$ matrix $A = (a_{ij})$ defined by $a_{ij} = \cl{j s_i}$. We need to show that there is a column in $A$ with at most $d$ entries belonging to $B$:
Let $T$ be the total number of positions in $A$ occupied by elements of $B$. Since $\z_p$ is a field and each $s_i$ is nonzero, every row of $A$ consists of the whole of $\z_p^*$. Thus $T=k|B|$, and the hypothesis on $|B|$ implies that $T \le kp(d+1)/(k+\e)$. If every column in $A$ had at least $d+1$ entries in $B$, then $T \ge (p-1)(d+1)$. Hence,

$$k\fr{p(d+1)}{k+\e} \ge (p-1)(d+1),$$
and so 
$$\fr{k}{k+\e} \ge \fr{(p-1)}{p}.$$
But by assumption $p > \fr{k}{\e} +1$ so that $(p-1) > \fr{k}{\e}$ so $\fr{(p-1)}{p} > \fr{k}{p\e}$. Also $p > \fr{k}{\e}+1$ gives $p\e > k + \e$, so that 
$\fr{(p-1)}{p} > \fr{k}{p\e} \ge \fr{k}{k+\e}$. This is a contradiction. Hence at least one column has no more than $d$ entries of elements in $B$.
\end{proof}

We use the above results to prove the first important theorem of this section.

\begin{theorem}
Let $k$ and $d$ be arbitrary integers, $0 \le d < k.$ Then every set $S$ of k positive integers contains a subset $D$ of size $k-d$ such that $\del(D) \ge (d+1)/(2k)$.
\end{theorem}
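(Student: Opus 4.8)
The plan is to apply Proposition 32 with a well-chosen prime $p$ and the specific "interval" obstruction set $B = \pm\set{1,2,\dots,m}$ from Lemma 31, then combine the conclusion of Proposition 32 with Lemma 31 to force the desired sub-lonely behaviour. First I would fix $k$ and $d$ with $0 \le d < k$, take an arbitrarily small $\e>0$, and choose a prime $p > k/\e + 1$ that does not divide any element of the given set $S$ (infinitely many such primes exist). The key numerology: set $m = \fl{p(d+1)/(2k+\e)}$ or a similarly tuned value so that the set $B = \pm\set{1,\dots,m} \con \z_p^*$ has $|B| = 2m \le p(d+1)/(k+\e)$, exactly the size hypothesis needed to invoke Proposition 32. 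Proposition 32 then yields an $x \in \z_p^*$ with $|B \cap xS| \le d$, i.e.\ at most $d$ of the $k$ residues $\cl{xs_i}$ land in $B$.

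Next I would let $D \con S$ be the set of those $s_i$ whose residue $\cl{xs_i}$ does \emph{not} lie in $B$; by the previous paragraph $|D| \ge k - d$, and by discarding extra elements we may take $|D| = k-d$ exactly. Now apply (the first part of) Lemma 31 to the set $D$ with this same $x$ and this same $m$: since $\cl{xs_i} \notin B = \pm\set{1,\dots,m}$ for every $s_i \in D$, Lemma 31 gives $\del(D) \ge (m+1)/p$. It remains to check that $(m+1)/p \ge (d+1)/(2k)$; with $m \approx p(d+1)/(2k+\e)$ this reads $(m+1)/p \gtrsim (d+1)/(2k+\e)$, which exceeds $(d+1)/(2k) - (\text{something}\to 0)$, and one pushes $\e \downarrow 0$ (equivalently $p \to \infty$) to remove the slack, using that $\del(D)$ does not depend on $\e$ or $p$ while the bound is being proved for a fixed finite set $D$ drawn from a fixed $S$. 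Actually a cleaner route is to note that $D$ is chosen from the \emph{finite} set $S$, so there are only finitely many candidate subsets; a standard $\e\to 0$ / subsequence argument picks one subset $D$ of size $k-d$ that works for a sequence $\e_n \to 0$, and for that $D$ the inequality $\del(D) \ge (d+1)/(2k)$ follows by passing to the limit.

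The main obstacle I anticipate is the floor-function bookkeeping: one must verify simultaneously that $m = \fl{\,\cdot\,}$ is large enough that $(m+1)/p$ beats $(d+1)/(2k)$ in the limit, yet small enough that $2m \le p(d+1)/(k+\e)$ holds so Proposition 32 applies — and the two constraints are genuinely tight (the $2k$ in the theorem versus the $k+\e$ in Proposition 32 is precisely what makes the "interval" set $B$, of size $2m$, the right choice rather than an arbitrary set). Getting the two denominators to reconcile in the $\e \to 0$ limit, and handling the floor so no off-by-one wrecks the estimate, is where the real care goes; everything else (existence of the prime, extracting $D$, invoking Lemma 31) is routine.
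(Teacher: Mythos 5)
Your proposal is correct and follows essentially the same route as the paper: choose $m$ as a floor of $p(d+1)/(2(k+\e))$ so that $|B|=2m$ meets the size hypothesis of the proposition, extract $x$ with $|B\cap xS|\le d$, let $D$ be the elements avoiding $B$, apply the lemma to get $\del(D)\ge (m+1)/p \ge (d+1)/(2(k+\e))$, and then use the finiteness of subsets of $S$ to pass to the limit $\e_n\to 0$ along a subsequence where $D$ is constant. The floor/tuning issue you flag is handled in the paper exactly as you anticipate, and your remaining steps match the paper's argument.
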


In the special case of $d = 1$, this says that if we are given a set of integers of size $k$, there is a subset of size $k-1$ with $\del(D) \ge (d+1)/(2k) = 1/k$. In light of the first section, this says if we are given a set of $k+1$ runners, removing a certain runner (or making him ``invisible") will give us $k$ runners where the runner with speed 0 becomes lonely in the sense of $k+1$ runners. If $k\ge 6$ (so that $\fr{3}{2k} \ge \f{k-2}$), then the case $d=2$ says that given $k+2$ runners, removing 2 runners will give a remaining set of $k$ runners $D$ with $\del(D) \ge 3/(2k) \ge 1/(k-2)$. That is, every set of $k+2$ runners contains a set of $k$ runners where the runner with constant 0 speed becomes lonely $\emf{regardless of the size of k}$, provided $k \ge 6$.

\begin{proof}[Proof of theorem 32]
Let $0 \le d < k$ be fixed and let $S$ be any set of $k$ positive integers. Let $\e_n > 0$ with $\e_n \to 0$ as $n \to \infty$. For every $n$, let $p_n$ be a prime such that $p_n > \fr{k}{\e_n} + 1$.
Set $m_n = \fl{p_n(d+1)/(2(k+\e_n))}$ and $B_n = \pm\set{1,2,\dots,m_n}$. By Proposition 31 there is an $x_n \in \z_{p_n}^*$ with $|B_n \cap x_nS| \le d$. Let $D_n = \set{s \in S: x_ns \notin B_n}$. Now since $|B_n \cap x_nS| \le d$, we have that $|D_n| \ge k - d$ for each $n \ge 1$. By the lemma, it follows that 

$$\del(D_n) \ge \fr{m_n + 1}{p_n} \ge \fr{p_n(d+1)/(2(k+ \e_n)}{p_n}=\fr{d+1}{2(k+ \e_n)}.$$
Since $S$ is a finite set, there are infinitely many $n$ for which $D_n \con S$ is the same subset. Call this subset $D$. Since $\e_n \to 0$. We have that 

$$\del(D) \ge \ds\lim_{n \to \infty} \fr{d+1}{2(k + \e_n)} = \fr{d+1}{2k}.$$ This proves the theorem.
\end{proof}

\begin{proposition}
If $S = \set{s_1, s_2,\dots,s_k} \con \n$, then $\del(S)$ is attained for $x_0 = a/(s_i + s_j)$ for some $i \neq j$ and some positive integer $a < s_i + s_j + 1$. 
\end{proposition}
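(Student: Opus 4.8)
The plan is to analyze the function $f(x) = \min_{1 \le i \le k} \norm{x s_i}$ as a function of $x$ on the interval $[0,1]$ (using periodicity $\norm{xs_i}=\norm{(x+1)s_i}$) and locate where its supremum $\del(S)$ is attained. First I would observe that $f$ is continuous and piecewise linear, since each $x \mapsto \norm{x s_i}$ is piecewise linear with breakpoints where $x s_i \in \tfrac{1}{2}\z$, i.e. at $x = \tfrac{\ell}{2 s_i}$, and the minimum of finitely many such functions is again continuous and piecewise linear. Hence $f$ attains its maximum on the compact set $[0,1]$, either at a point where $f$ has a local max as a smooth function (impossible here, as each linear piece of $f$ has nonzero slope $\pm s_i$) or at a breakpoint of $f$. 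A breakpoint of $f$ is either a breakpoint of one of the individual functions $\norm{x s_i}$, or a crossing point where two of them, say $\norm{x s_i}$ and $\norm{x s_j}$, are equal and both realize the minimum. In the first case $\norm{x_0 s_i} \in \{0, \tfrac12\}$, and since $\del(S) > 0$ (by Proposition 11, $\del(S) \ge \tfrac{1}{2k} > 0$) we cannot have the minimum equal to $0$, so $\norm{x_0 s_i} = \tfrac12$, giving $x_0 s_i = \tfrac{a'}{2}$ with $a'$ odd; I would then argue this case can be absorbed into the second case (or handled directly, writing $x_0 = \tfrac{a'}{2 s_i}$ and noting $2 s_i = s_i + s_i$ is not allowed since we need $i \ne j$, so I must instead perturb or observe the max is actually also a crossing point — see the obstacle below).

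The heart of the argument is the crossing case. Suppose the supremum is attained at $x_0$ where $\norm{x_0 s_i} = \norm{x_0 s_j} = \del(S)$ for some $i \ne j$, with this common value being the minimum over all $s_l$. From $\norm{x_0 s_i} = \norm{x_0 s_j}$ we get that $x_0 s_i$ and $x_0 s_j$ are equidistant from the integers, so either $x_0 s_i - x_0 s_j \in \z$ or $x_0 s_i + x_0 s_j \in \z$ (the two points coincide mod $1$, or are reflections of each other through a half-integer). In the first subcase $x_0(s_i - s_j) \in \z$; in the second, $x_0 (s_i + s_j) \in \z$, which is exactly the desired form $x_0 = a/(s_i+s_j)$. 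I would then show the first subcase, $x_0(s_i-s_j) \in \z$, is not the generic situation for a \emph{maximum}: if $x_0 s_i \equiv x_0 s_j \pmod 1$ then near $x_0$ the two graphs $\norm{xs_i}$ and $\norm{xs_j}$ have slopes of the same sign just to one side of $x_0$ only if $s_i = s_j$; since $s_i \ne s_j$, on at least one side of $x_0$ one of the two functions is increasing through $x_0$, so $f$ cannot have a local maximum purely from this crossing — some third breakpoint must also be active, or the relevant crossing is actually of the $+$ type. Thus at a maximizing $x_0$ the binding crossing is the $x_0(s_i+s_j)\in\z$ type, giving $x_0 = a/(s_i+s_j)$ for an integer $a$, and $0 \le x_0 \le 1$ forces $0 \le a \le s_i + s_j$; ruling out the endpoints (where $f = 0$) gives $1 \le a \le s_i + s_j$, hence $a < s_i + s_j + 1$ as claimed.

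The step I expect to be the main obstacle is making rigorous the claim that \emph{every} maximizer — or at least one maximizer — is of the $+$-crossing type, excluding both the single-breakpoint case $\norm{x_0 s_i} = \tfrac12$ and the $-$-crossing case $x_0(s_i - s_j)\in\z$. The clean way is a careful local analysis: on each side of a candidate $x_0$, $f$ is linear with slope equal to $\pm s_l$ for whichever index $l$ is currently binding; for $x_0$ to be a local max we need the left slope $\ge 0$ and the right slope $\le 0$, so the binding index switches from one whose $\norm{\cdot\, s_l}$ is increasing to one whose $\norm{\cdot\, s_l}$ is decreasing, which is precisely an ``upward vee turned peak'' crossing; I would check that such a configuration forces $x_0 s_i + x_0 s_j \in \z$ for the two indices $i,j$ binding on the left and right (their values agree and one is on an increasing branch, the other on a decreasing branch, so they are reflections through a half-integer). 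The single-breakpoint case $\norm{x_0 s_i}=\tfrac12$ with only one binding index can't be a strict local max of $f$ unless a second index ties there, which returns us to the crossing case; so the argument closes. I would also need the tie-breaking remark that if several indices bind simultaneously we may pick any two of them giving the $+$ relation, which exists by the slope-sign argument. Finally, I'd double-check the edge case $k=1$ separately (there $\del(\{s_1\}) = \tfrac12$ attained at $x_0 = \tfrac{1}{2s_1}$, and the statement as written, requiring $i \ne j$, would need $k \ge 2$ — I would note this hypothesis or treat $k=1$ as trivial).
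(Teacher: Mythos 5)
Your proposal is correct and takes essentially the same route as the paper: both locate a maximizer $x_0$ of $f_S(x)=\min_{1\le i\le k}\norm{xs_i}$ by compactness, use a local perturbation/slope argument to force two distinct binding indices $i\neq j$ with $\set{x_0s_i}=1-\set{x_0s_j}$, and conclude $x_0(s_i+s_j)=a\in\z$ with $1\le a\le s_i+s_j$. If anything, your explicit left-slope/right-slope analysis is more careful than the paper's own proof on the key dichotomy --- ruling out the $\set{x_0s_i}=\set{x_0s_j}$ (i.e.\ $x_0(s_i-s_j)\in\z$) crossing and the single-index peak at $\norm{x_0s_i}=\f{2}$ --- which the paper dispatches with a one-line assertion.
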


\begin{proof}
Define $f_S(x) = \ds\min_{1\le i \le k} \norm{x s_i}$ for $x \in \T$. By continuity of $f_S$ and the fact that $\T$ is compact, there is an $x_0 \in \T$ where $f_S$ attains its maximum. Thus $\del(S)=\ds\sup_{x \in [0,1]} \ds\min_{1 \le i \le k} \norm{x s_i}= f_S(x_0)$, and let $s_i \in S$ be an integer for which $\del(S) = \norm{x_0 s_i}$. We show that there must be another $j \neq i$ such that $\norm{x_0s_i} = \norm{x_0 s_j}$. If there was not any such $j$, then $\ds\min_{1\le i\le k} \norm{x_0s_i}=\norm{x_0s_i} < \norm{x_0s_j}$ for $j \neq i$. Thus choosing $\e > 0$ small enough we will have $\ds\min_{j \neq i} \norm{(x_0 \pm \e)s_j} > \norm{(x_0 \pm \e)s_i}$ by continuity. But either $\norm{(x_0+\e)s_i} > \norm{x_0s_i}$, or $\norm{(x_0-\e)s_i}>\norm{x_0s_i}$, so that $f_S(x_0)$ is not the really the maximum of $f_S$. Similarly we can show there is such a $j$ with $s_j \not s_i$. Since $\norm{x_0s_i} = \norm{x_0s_j}$, we must have $\set{x_0s_i} = 1 - \set{x_0s_j}$. Hence 
$$x_0s_i+x_0s_j = \fl{x_0s_i} + \fl{x_0s_j} + \set{x_0s_i} + \set{x_0s_j} = \fl{x_0s_i} + \fl{x_0s_j}+1 := a,$$ which results in $x_0 = a/(s_i+s_j)$ satisfying the required properties.
\end{proof}

Applying Proposition 33, we have the following equivalence to the LRC,

\begin{conj}
For every set $S \con \n$ of size $k$, there is a natural number $n$, and $x \in \z_n$, such that $xS \cap B = \emptyset$ for $B = \pm\set{0,1,\dots, m}$ where $m = \ceil{n/(k+1)}-1$.
\end{conj}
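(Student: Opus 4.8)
The plan is to show that Conjecture 34 is equivalent to the LRC by relating it to Proposition 33 together with Lemma 29, which is exactly the pattern already established in this section. First I would prove the forward direction (LRC $\Rightarrow$ Conjecture 34). Assume the LRC, equivalently $\del(S) \ge 1/(k+1)$ for every $k$-element set $S \con \n$. By Proposition 33, the supremum defining $\del(S)$ is attained at some rational $x_0 = a/(s_i+s_j)$ with $1 \le a < s_i+s_j+1$; in particular $x_0$ has denominator $n := s_i+s_j$, so $x_0 \in \frac{1}{n}\z$. Then for every $s \in S$ we have $\norm{x_0 s} = \norm{\cl{as}/n}$ where $\cl{as} \in \z_n$, and $\norm{x_0 s} \ge 1/(k+1)$ forces $\cl{as}$ to avoid the block $B = \pm\set{0,1,\dots,m}$ with $m = \ceil{n/(k+1)} - 1$: indeed $\norm{b/n} < 1/(k+1)$ precisely when the residue $b$ (taken in $(-n/2, n/2]$) satisfies $|b| < n/(k+1)$, i.e. $|b| \le m$. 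Setting $x = \cl{a} \in \z_n$ gives $xS \cap B = \emptyset$, which is the statement of Conjecture 34 (after checking $\cl{a} \neq 0$, which holds because $0 \notin xS$ as the $s_i$ are positive and coprime considerations can be arranged, or simply because $\norm{x_0 s_i} \ge 1/(k+1) > 0$).

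Next I would prove the reverse direction (Conjecture 34 $\Rightarrow$ LRC). Assume Conjecture 34 and fix a $k$-element set $S \con \n$; we must show $\del(S) \ge 1/(k+1)$. Conjecture 34 supplies $n \in \n$ and $x \in \z_n$ with $xS \cap B = \emptyset$ for $B = \pm\set{0,1,\dots,m}$, $m = \ceil{n/(k+1)} - 1$. This is precisely the hypothesis of Lemma 29 with $p$ replaced by $n$ (Lemma 29 is stated for a prime, but its proof only uses that the residues $\cl{x s_i}$ are nonzero and avoid $\pm\set{1,\dots,m}$; I would restate the needed computation directly, or note the proof goes through verbatim for a general modulus $n$ once one knows $\cl{x s_i} \neq 0$ for all $i$, which is exactly $0 \notin xS$, guaranteed since $B$ contains $0$). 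The conclusion is $\del(S) \ge (m+1)/n = \ceil{n/(k+1)}/n \ge 1/(k+1)$. Since $S$ was an arbitrary $k$-element subset of $\n$, this is the LRC by Proposition 4 (in the reformulation $\del(S) \ge 1/(k+1)$).

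The main obstacle I anticipate is bookkeeping around the definition of the block $B$ and the rounding: I must verify carefully that for a residue $b \in \z_n$, writing $\beta$ for its representative of smallest absolute value, one has $\norm{b/n} < 1/(k+1) \iff |\beta| \le m$ with $m = \ceil{n/(k+1)} - 1$, i.e. that the threshold $m$ is chosen so that $\pm\set{0,1,\dots,m}$ is exactly the set of residues lying strictly within distance $1/(k+1)$ of $0$ on the circle of $n$ points. This is where the ceiling (rather than floor, as in Lemma 29's final line) enters, and it is the delicate point: $|\beta|/n < 1/(k+1) \iff |\beta| < n/(k+1) \iff |\beta| \le \ceil{n/(k+1)} - 1 = m$, using that $|\beta|$ is an integer. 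A secondary, purely technical point is the parity/edge case $|\beta| = n/2$ when $n$ is even, and the non-vanishing condition $\cl{x s_i} \neq 0$; both are handled by noting $0 \in B$ so the conjecture already rules out $\cl{x s_i} = 0$, and $n/2 \ge n/(k+1)$ for $k \ge 1$ so such a residue is automatically outside $B$ and contributes $\norm{\cdot} = 1/2 \ge 1/(k+1)$ harmlessly. Once these rounding identities are pinned down, the equivalence is immediate from Proposition 33 and Lemma 29.
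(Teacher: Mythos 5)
Your proof is correct and follows essentially the same route as the paper's: the forward direction invokes Proposition 33 to obtain the modulus $n=s_i+s_j$ and then verifies the rounding identity $|\beta|<n/(k+1)\iff|\beta|\le\ceil{n/(k+1)}-1$, and the reverse direction is the same residue computation as the lemma opening Section 5 (which you correctly observe needs no primality, only $\cl{xs_i}\neq 0$, guaranteed because $0\in B$). The only discrepancy is cosmetic: the lemma you cite as Lemma 29 is Lemma 30 in the paper's numbering.
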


\begin{proof}[Proof of Equivalence]
If the LRC is true then for any $k$ element set $S \con \n$ we have $\del(S) \ge 1/(k+1)$, so by Proposition 33, the above conjecture will hold with $n = s_i + s_j$. This follows since, by the proposition, $\norm{\fr{a}{n}s_l} \ge \del(S) \ge 1/(k+1)$ so that the numbers $\cl{as_l}=as_l\!\!\mod(n)$ fall outside of the set $B= \pm \set{0,1,\dots,m}$ with $m= \ceil{n/(k+1)}-1$. Otherwise if $\cl{as_l} \in B$ then $\norm{as_l/n}=\norm{\cl{as_l}/n} = \norm{q/n}$ for some $q \in B$. But all such $q \in \pm \set{0,1,\dots, m}$ have $q/n \in \pm \set{0, 1/n,\dots, m/n}$ and since $m/n \le (\ceil{\fr{n}{k+1}}-1)/n < \fr{n}{k+1}/n = 1/(k+1)$, we have $\norm{q/n} \le \norm{\pm m/n} < 1/(k+1)$ which would be a contradiction. This same argument shows that if there is an $n$ and $x \in \z_n$ where $xS \cap B = \emptyset$, we have $\norm{xs_i} \ge 1/(k+1)$ for all $1 \le i \le k$, which implies $\del(S) \ge 1/(k+1)$. If this happens for all $S \con \n$ of arbitrary size $k$, then the LRC holds by Proposition 4.
\end{proof}

Given a set of runners with positive integer speeds $\set{s_i}_1^k$, Proposition 33 shows that we can compute, in finite steps, the time a certain runner with speed $s_i$ becomes lonely. Let $r_j = s_j - s_i$ for all $j \neq i$. Set $M = \ds\max_{l \neq k} (r_j + r_k)$. Then compute $a_{(j, l)} = j+l$ for all $a_{(j,l)} \le M+1$, which is a finite computation. By Proposition 33, one of the numbers in the set 
$$\set{\fr{a_{(j,l)}}{(r_q + r_m)}}_{j,l,q,m =1}^{k-1}$$ 
is a time when $s_i$ becomes lonely. Or, more precisely, since we cannot assume $s_i$ is ever lonely, the set above contains a time when $s_i$ is the furthest distance possible from every other runner.

\end{document}